\documentclass[11pt]{article}
\usepackage{mathrsfs}
\usepackage{xcolor}
\usepackage[a4paper,margin=1.1in]{geometry}
\usepackage{tocloft}
\usepackage{latexsym}
\usepackage{epsfig}
\usepackage{amsthm}
\usepackage{amssymb}
\usepackage{amsfonts}
\usepackage{amsmath,amsbsy}
\usepackage{epstopdf}
\usepackage{setspace}
\usepackage{mhequ}
\usepackage{centernot}
\usepackage{hyperref}
\usepackage{tikz}
\setlength{\parskip}{\medskipamount}

\usetikzlibrary{shapes.misc}
\usetikzlibrary{shapes.symbols}
\usetikzlibrary{shapes.geometric}
\usetikzlibrary{snakes}
\usetikzlibrary{decorations}
\usetikzlibrary{decorations.markings}

\colorlet{darkgreen}{green!60!black}

\def\drawx{\draw[-,solid] (-3pt,-3pt) -- (3pt,3pt);\draw[-,solid] (-3pt,3pt) -- (3pt,-3pt);}
\tikzset{
	root/.style={circle,fill=testcolor,inner sep=0pt, minimum size=2mm},
	dot/.style={circle,fill=black,inner sep=0pt, minimum size=2.5mm},
	circ/.style={circle,draw=black,inner sep=0pt, minimum size=1mm},
	var/.style={circle,fill=black!10,draw=black,inner sep=0pt, minimum size=2mm},
	dotred/.style={circle,fill=black!50,inner sep=0pt, minimum size=2mm},
	generic/.style={semithick,shorten >=1pt,shorten <=1pt},
	oddfunc/.style={generic, dotted},
	dist/.style={ultra thick,draw=testcolor,shorten >=1pt,shorten <=1pt},
	testfcn/.style={ultra thick,testcolor,shorten >=1pt,shorten <=1pt,<-},
	testfunction/.style={ultra thick,testcolor,shorten >=1pt,shorten <=1pt},
	testfcnx/.style={ultra thick,testcolor,shorten >=1pt,shorten <=1pt,<-,
		postaction={decorate,decoration={markings,mark=at position 0.6 with {\drawx}}}},
	kprime/.style={semithick,shorten >=1pt,shorten <=1pt,densely dashed,->},
	kprimex/.style={semithick,shorten >=1pt,shorten <=1pt,densely dashed,->,
		postaction={decorate,decoration={markings,mark=at position 0.4 with {\drawx}}}},
	kernel/.style={semithick,shorten >=1pt,shorten <=1pt,->,draw=black},
	multx/.style={shorten >=1pt,shorten <=1pt,
		postaction={decorate,decoration={markings,mark=at position 0.5 with {\drawx}}}},
	kernelx/.style={semithick,shorten >=1pt,shorten <=1pt,->,
		postaction={decorate,decoration={markings,mark=at position 0.4 with {\drawx}}}},
	kernel1/.style={->,semithick,shorten >=1pt,shorten <=1pt,postaction={decorate,decoration={markings,mark=at position 0.45 with {\draw[-] (0,-0.1) -- (0,0.1);}}}},
	kernel2/.style={->,semithick,shorten >=1pt,shorten <=1pt,postaction={decorate,decoration={markings,mark=at position 0.45 with {\draw[-] (0.05,-0.1) -- (0.05,0.1);\draw[-] (-0.05,-0.1) -- (-0.05,0.1);}}}},
	wave/.style={snake=coil,segment aspect=0.5},
	rho/.style={dotted,semithick,shorten >=1pt,shorten <=1pt},
	renorm/.style={shape=circle,fill=white,inner sep=1pt},
	labl/.style={shape=rectangle,fill=white,inner sep=1pt},
cumu2n/.style={inner sep=3pt},
cumu2/.style={draw=red!80,fill=red!40},
cumu2b/.style={draw=blue!80,fill=blue!40},
cumu2nv/.style={inner sep=3pt},
cumu2v/.style={draw=red!80,fill=white,very thick},
cumu3/.style={regular polygon, regular polygon sides=3,draw=red!80,rounded corners=3pt,fill=red!40,minimum size=5mm},
cumu4/.style={regular polygon, regular polygon sides=4,draw=red!80,rounded corners=3pt,fill=red!40,minimum size=7mm},
cumu5/.style={regular polygon, regular polygon sides=5,draw=red!80,rounded corners=3pt,fill=red!40,minimum size=7mm},
	xi/.style={circle,fill=symbols!10,draw=symbols,inner sep=0pt,minimum size=1.2mm},
	xix/.style={crosscircle,fill=symbols!10,draw=symbols,inner sep=0pt,minimum size=1.2mm},
	xib/.style={circle,fill=symbols!10,draw=symbols,inner sep=0pt,minimum size=1.6mm},
	xibx/.style={crosscircle,fill=symbols!10,draw=symbols,inner sep=0pt,minimum size=1.6mm},
	not/.style={circle,fill=symbols,draw=symbols,inner sep=0pt,minimum size=0.5mm},
	>=stealth,
	}

\newtheorem{theorem}{Theorem}[section]
\newtheorem{thm}[theorem]{Theorem}
\newtheorem{lemma}[theorem]{Lemma}
\newtheorem{lem}[theorem]{Lemma}
\newtheorem{proposition}[theorem]{Proposition}

\newtheorem{corollary}[theorem]{Corollary}

\theoremstyle{definition}
\newtheorem{definition}[theorem]{Definition}
\newtheorem{defn}[theorem]{Definition}

\newtheorem{remark}[theorem]{Remark}
\newtheorem{rem}[theorem]{Remark}
\numberwithin{equation}{section}

 \DeclareMathAlphabet{\mathpzc}{OT1}{pzc}{m}{it}
 \DeclareMathOperator\supp{supp}

 \newcommand{\eqdef}{\stackrel{\mbox{\tiny def}}{=}}

\newcommand{\Ell}{\ensuremath{\boldsymbol\ell}}

 \newcommand{\E}{\mathbb{E}}            
  \newcommand{\ka}{\kappa}
 \newcommand{\CZ}{\mathcal{Z}}
 
 \newcommand{\e}{\varepsilon}

 \newcommand{\N}{\mathbb{N}}
 \newcommand{\R}{\mathbb{R}}
 \newcommand{\Z}{\mathbb{Z}}

 \newcommand{\Be}{\begin{equation}}
 \newcommand{\Ee}{\end{equation}}
 \newcommand{\Bs}{\begin{split}}
 \newcommand{\Es}{\end{split}}
  \newcommand{\Bes}{\begin{equation*}}
 \newcommand{\Ees}{\end{equation*}}
 \newcommand{\BT}{\begin{thm}}
 \newcommand{\ET}{\end{thm}}
 \newcommand{\Bp}{\begin{proof}}
 \newcommand{\Ep}{\end{proof}}
 \newcommand{\BL}{\begin{lem}}
 \newcommand{\EL}{\end{lem}}
 \newcommand{\BP}{\begin{proposition}}
 \newcommand{\EP}{\end{proposition}}
 \newcommand{\BC}{\begin{corollary}}
 \newcommand{\EC}{\end{corollary}}
 \newcommand{\BR}{\begin{rem}}
 \newcommand{\ER}{\end{rem}}
 \newcommand{\BD}{\begin{defn}}
 \newcommand{\ED}{\end{defn}}
 \newcommand{\BI}{\begin{itemize}}
 \newcommand{\EI}{\end{itemize}}

  \newcommand{\CP}{\mathcal{P}}
    \newcommand{\CC}{\mathcal{C}}

  \newcommand{\Noise}{\Xi}


\definecolor{darkergreen}{rgb}{0.0, 0.5, 0.0}


\newcommand{\bP}{{\mathbf P}}
\newcommand{\bE}{{\mathbf E}}
\renewcommand{\P}{\mathbb{P}}

\newcommand{\1}{\mathbf{1}}
\def\H{\mathcal H}


\begin{document}

\title{Scaling limit of a directed polymer among a Poisson field of independent walks}

\author{Hao Shen\footnote{University of Wisconsin-Madison, Van Vleck Hall, US. Email: pkushenhao@gmail.com},
$\;$
Jian Song\footnote{Research Center for Mathematics and Interdisciplinary Sciences, Shandong University, Qingdao, Shandong, 266237, China and School of Mathematics, Shandong University, Jinan, Shandong, 250100, China.
Email: txjsong@hotmail.com},
$\;$
Rongfeng Sun\footnote{Department of Mathematics,
		National University of Singapore,
		10 Lower Kent Ridge Road, 119076 Singapore.
		Email: matsr@nus.edu.sg},
$\;$
 Lihu Xu\footnote{1.\ Department of Mathematics, Faculty of Science and Technology,
 	University of Macau, Av.\ Padre Tom\'{a}s Pereira, Taipa Macao, China; 2.\ UMacau Zhuhai Research Institute, Zhuhai, China.  Email: lihuxu@um.edu.mo}
}

\maketitle

\vspace{-5ex}

\begin{abstract}
We consider a directed polymer model in dimension $1+1$, where the disorder is given by the occupation field of
a Poisson system of independent random walks on $\Z$. In a suitable continuum and weak disorder limit, we show
that the family of quenched  partition functions of the directed polymer converges to the Stratonovich solution of
a multiplicative stochastic heat equation (SHE) with a Gaussian noise, whose space-time covariance is given
by the heat kernel.

In contrast to the case with space-time white noise where the solution of the
SHE admits a Wiener-It\^{o} chaos expansion, we establish an $L^1$-convergent
chaos expansions of iterated  integrals generated by Picard iterations. Using this expansion
and its discrete counterpart for the polymer partition functions, the convergence of the terms in the expansion is proved
via functional analytic arguments and heat kernel estimates. The Poisson random walk system is amenable to careful moment analysis, which
is an important input to our arguments.
\end{abstract}

\setcounter{tocdepth}{1}
\tableofcontents

\section{Introduction}

The directed polymer in random environment (disorder) is a classic model in the study of disordered systems. See e.g.\ the lecture notes by Comets~\cite{Comets17}. It is also intimately connected to the stochastic heat equation with multiplicative noise. In particular, Alberts, Khanin and Quastel~\cite{AKQ} showed that in dimension $1+1$, when the space-time random environment consists of i.i.d.\ random variables, in a suitable weak disorder and diffusive space-time scaling limit, the family of polymer partition functions (indexed by the starting point of the polymer) converges in distribution to the solution of the multiplicative stochastic heat equation (SHE) with {\it space-time white noise}. More generally, for so-called disorder relevant systems,
such an intermediate disorder regime is often expected to exist and leads to non-trivial continuum disordered systems, such as the SHE for the directed polymer. See \cite{CSZ17a} for more details and other examples such as the random field Ising model. In the language of singular SPDE's, such results are also known as weak universality results because the limit does not depend on the details of the disorder on the microscopic scale, and they even hold for a general class of discrete models, see for instance
\cite{Bertini1997,Amir11} for SHE convergence of asymmetric simple exclusion processes (ASEP) and
\cite{Dembo2016,CSL16,Labbe16b,corwin2016open,G17,MR3907953} for variants of ASEP, and \cite{CT15,CGST} for six-vertex model or its higher-spin generalizations,
and \cite{jara2019stationary} for convergence of another directed polymer model introduced by O'Connell-Yor \cite{MR1865759} (though the proof is via connection to stochastic Burgers equation).
Let us also mention the work
\cite{MR3785598} in which a singular polymer measure defined on continuous paths is constructed in $d=2,3$.
This SPDE in connection with the KPZ equation via Hopf-Cole transformation leads to a vast literature;  see \cite[Section~6]{corwin2019Bulletin} for further references.

When the random environment in the directed polymer model consists of i.i.d.\ random variables indexed by space that do no vary with time, a similar
continuum and weak disorder limit exists in dimensions $d<4$, which is an SHE driven by {\it spatial white noise}, known as the Parabolic Anderson model (PAM). In dimensions $d=2,3$, we need the theory of regularity structure/paracontrolled distributions \cite{GIP,Hairer14,MR3779690} to define the solution of PAM.
Weak universality results are also proved \cite{MR3736653,MR4029148}; and in $d=2$, a series expansion solution for PAM for small time is recently defined in \cite{MR3798237}, which is to some extent related to the method exploited in the present paper.
Note that of particular interest is the recent papers \cite{CTInhomogeneous} and \cite{perkowski2019rough} in which
(generalized) SHEs driven by a {\it mix} of spacetime and spatial multiplicative noises are derived from
ASEP in a spatially inhomogeneous environment and branching random walk in static random environment.

It is natural to ask whether similar weak universality results hold for directed polymer in more general dependent space-time random environment, and what limits do we obtain. This motivates us to consider a random environment defined by the occupation field of a Poisson system of random walks, which has been studied extensively and used as a dynamic random environment in various contexts, see e.g.~\cite{CG84, VS05, GdH06, DGRS12, MdHS+}.  There are several reasons for working with Poisson random walks.  They exhibit non-trivial space-time fluctuations which are representative of a wide class of models known as the Edwards-Anderson universality class (see e.g.~\cite{Sep05, Ravishankar, PS08}). On the other hand, the independence of the walks allow explicit calculations not possible for other models. In the context of the directed polymer model, which is closely linked to the parabolic Anderson model, the Poisson field of random walks can naturally be interpreted as a catalytic potential where the catalysts undergo independent motion (see e.g.~\cite{GdH06} for further background). With the occupation field of Poisson random walks as our random potential, we will show that in $d=1$, the polymer partition functions converge to the solution to a new SPDE, which is an SHE driven by a multiplicative Gaussian noise with a {\it long-range but singular} covariance, more precisely, the heat kernel. We also conjecturally expect that a similar convergence would hold for this new SPDE in $d<4$.

We will consider both the polymer and the Poisson walks in continuous time.

\begin{definition}(Poisson field of independent walks.)
At time $t=0$, we start with $\xi(0,x)$ particles for each $x \in \Z^d$ such that $\xi(0,x)$ are i.i.d.\ Poisson with mean $\lambda>0$. Each particle then performs rate $1$ independent simple symmetric random walk on $\Z^d$. Denote by $\xi(t,x)$ the number of particles at position $x$ and time $t$. More precisely,
\begin{equ}\label{e:xi}
\xi(t,x) = \sum_{y\in\Z^d} \sum_{i=1}^{\xi(0,y)} 1_{\{Y^{y,i}_t=x\}},
\end{equ}
where $Y^{y,i}$ is the $i$-th random walk starting from $y$ at time $0$. It is easily seen that i.i.d.\ Poisson product measure with mean $\lambda$ is an invariant measure of the dynamics $\xi(t,\cdot)$. We will denote probability and expectation with respect to $\xi$ by $\P$ and $\E$, respectively.
\end{definition}

The directed polymer will be modeled by a rate $1$ simple symmetric random walk $S$. If $S$ starts from position $x$ at time $t$, then we denote
its probability and expectation by $\bP_{(t,x)}$ and $\bE_{(t,x)}$ respectively, and by $\bP$ and $\bE$ when $(t,x)=(0,0)$. We will denote
the transition kernel for $S$ by $P_t(x) := \bP(S_t=x)$.

\begin{definition}(Directed polymer in the environment $\xi$.)
Let $\{\xi(t,x)\}_{t \ge 0, x \in \Z^d}$ be defined from the mean $\lambda$ Poisson field of random walks as above, and let $S$ be a simple symmetric random walk on $\Z^d$. Given $T>0$ and coupling constant $\beta\in \R$, define
\begin{align}
\Lambda_\beta(\xi, S):=\exp\left[\beta \int_0^T \tilde \xi(s,S_s) {\rm d}s\right], \qquad \tilde \xi(s,x):= \xi(s,x)-\lambda.
\end{align}
The (quenched) polymer measure in the random environment $\xi$ is then defined via a Gibbs change of measure for $S$ with weight given by $\Lambda_\beta(\xi, S)/Z^\xi_{T, \beta}$, where $Z^\xi_{T, \beta} := \bE[\Lambda_\beta(\xi, S)]$ is known as the quenched partition function.
Furthermore, we will allow the starting space-time point of the polymer $S$ to vary and consider the family of partition functions
\begin{equ} [e:Z]
Z^\xi_{T, \beta} (t,x) := \bE_{(t,x)}\left[\exp\left[\beta \int_t^T \tilde \xi(s,S_s) {\rm d}s\right]\right], \quad t\in [0, T], x\in \Z^d.
\end{equ}
For $x=(x_{1},...,x_{d}) \not\in \Z^d$, we define $Z^\xi_{T, \beta} (t,x):= Z^\xi_{T, \beta} (t,\lceil x\rceil)$  with $\lceil x\rceil=(\lceil x_{1}\rceil,...,\lceil x_{d}\rceil)$.
\end{definition}

It turns out that in dimension $d=1$, in order for the family of partition functions $(Z^\xi_{T, \beta}(t,x))_{t\in [0,T], x\in\R}$ to have non-trivial random limits, we need to scale space-time and $\beta$ as follows:

{\bf Scaling:} Let $\e=1/\sqrt{T}$. We will rescale $\beta$, $t$ and $x$ as follows:
\begin{equ} [e:scaling]
\beta \to \e^{\frac32} \beta\;,
\qquad
x \to  \e^{-1} x \;,
\qquad
t \to \e^{-2} t \;, \quad \quad \beta\in\R,\ x\in \R,\ t\geq 0.
\end{equ}

Our main result is then the following.

\BT[Convergence of polymer partition functions] \label{T:main} Given $\e>0$ and $\beta\in\R$, let
$(Z^\xi_{\e^{-2},\, \e^{3/2}\beta}(\e^{-2}t, \e^{-1}x))_{t\in [0,1], x\in \R}$ be the family of polymer partition functions defined as in \eqref{e:Z}. Then as $\e\downarrow 0$, it converges in finite-dimensional distribution to $(u(1-t,x))_{t\in [0,1], x\in\R}$, where $u$ is the Stratonovich solution
of the SPDE
\begin{equation}\label{spde}
\frac{\partial }{\partial t}u(t, x)=\frac12 \Delta  u(t,x)+ \beta \sqrt{\lambda} u \, \Noise(t, x), \qquad u(0, \cdot)\equiv 1,
\end{equation}
and $\Noise$ is a generalized Gaussian field with covariance
\begin{equ} [Xi]
\E [\Noise (t,x) \Noise(s,y)]=p_{|t-s|}(x-y) := \frac{1}{\sqrt{2\pi |t-s|}} e^{-\frac{|x-y|^2}{2|t-s|}}.
\end{equ}
\ET

\begin{remark}\label{R:EW}
We note that  $\Xi$ is in fact the stationary solution of the SPDE
\begin{equation}
\frac{\partial }{\partial t} \Xi = \frac12 \Delta \Xi + \partial_x \digamma(t,x),
\end{equation}
where $\digamma$ is a space-time white noise on $\R_+\times \R$. In particular, $\Xi= \partial_x u$, where $u$ is the solution of the additive stochastic heat (a.k.a.\ Edwards-Wilkinson) equation 
\begin{equation}\label{eq:EW}
\frac{\partial }{\partial t} u = \frac12 \Delta u + \digamma,
\end{equation}
with $u(0,\cdot)$ being a two-sided Brownian motion. This is consistent with the fact that the occupation field $\xi$ of the Poisson random walks is the spatial increment of a random growth model, whose fluctuation field converges to the solution of the Edwards-Wilkinson equation~\cite{Sep05}. 
\end{remark}

Note that when the polymer disorder $\xi(t,x)$ is white noise on $[0,\infty)\times \Z$, it is the continuous time analogue of the directed polymer considered in \cite{AKQ} and the correct scaling is $\beta \to \e^{\frac12} \beta$ (see also \cite{CSZ17a}). The critical dimension is $d=2$, at and above which,  it is generally believed that there is no non-trivial SPDE limit (see \cite{CSZ17b}). When $\xi(t,x)=\xi(0,x)$ is white noise on $\Z^d$, the correct scaling turns out to be $\beta \to \e^{2-\frac{d}{2}} \beta$. The critical dimension is instead $d=4$. Theorem~\ref{T:main} shows that our disorder $\xi$ requires the same scaling as the case of a spatial white noise. In other words, the power law decay (in time) of the covariance is not fast enough to change the critical spatial dimension when compared to the case of having no decay in time (white noise in space only). In dimensions $d=2,3$, similar to the parabolic Anderson model, we expect that the partition function requires renormalization in order to converge to the solution of a singular SPDE. It would also be interesting to find a class of models that interpolate between the PAM and SHE, and see what speed of decay for the noise covariance will alter the critical dimension of the model.

\subsection{Outline of proof}\label{S:outline}

In this section, we outline the main steps in our proof of Theorem \ref{T:main}. The basic strategy is to expand the polymer partition function $Z^\xi_{T, \beta}$ in terms of a series (chaos) expansion with respect to the disorder $\tilde \xi$. Unlike the case with i.i.d.\
noise as considered in \cite{AKQ, CSZ17a}, the terms of the expansion are not $L^2$ orthogonal. Instead, we will show that the series
is $L^1$-convergent and bound the tail of the series uniformly as $\e\downarrow 0$. We then show that the first $m$ terms of the series converge to a sum of iterated Stratonovich integrals with respect to the Gaussian noise $\Xi$, which is in fact a series representation of the solution for the SPDE \eqref{spde}.  Stratonovich integrals arise naturally in this context because in the expansion \eqref{eq:Zexp} below, we have the products of $\tilde \xi$. Due to the non-trivial covariance structure of $\tilde \xi$, their products cannot be replaced by Wick products via a simple normalisation of $Z^\xi_{T,\beta}$, unlike the case when $\tilde \xi$ is a field of independent random variables as in~\cite{AKQ}.

First we expand the partition function as follows:
\begin{align}
Z^\xi_{T,\beta}(t_0,x_0)
& = 1+\sum_{k=1}^\infty \frac{\beta^k}{k!} \bE_{(t_0,x_0)}\left[\Big(\int_{t_0}^T \tilde \xi(s, S_s) {\rm d}s\Big)^k  \right] \nonumber\\
& = 1 + \sum_{k=1}^\infty \beta^k \!\!\!\!\!\! \sum\limits_{x_1, \ldots, x_k\in\Z}\ \ \idotsint\limits_{t_0<t_1<\cdots<t_k<T}  \prod_{i=1}^k P_{t_i-t_{i-1}}(x_i-x_{i-1})  \prod_{i=1}^k \tilde \xi(t_i, x_i) {\rm d}t_i
\nonumber\\
& =:
1 +\sum_{k=1}^\infty Z_{T,\beta}^{(k)}(t_0,x_0) \;.
\label{eq:Zexp}
\end{align}
Note that this expansion is reminiscent of Wiener chaos expansion with respect to $\tilde \xi$, except that $\tilde \xi$ is correlated and non-Gaussian.
However, $\tilde \xi$ does converge to a generalised Gaussian field in the diffusive scaling limit.

\BP[Convergence to Gaussian field]\label{P:xilim}
As $\e\downarrow 0$, $\e^{-\frac12}\tilde \xi(\e^{-2}\cdot, \e^{-1}\cdot)$ converges in distribution to $\sqrt{\lambda}\Xi$
in the weighted Besov-H\"older space $\mathcal C^\alpha_\ka$ (defined in Appendix~\ref{S:Besov})  for every $\alpha<-1/2$ and $\ka> 0$,  where $\Xi$ is the generalised Gaussian field
defined in \eqref{Xi}.
\EP 

\begin{remark}
Our proof of Proposition \ref{P:xilim} (in Section \ref{S:fieldconv}) relies on direct calculations for the Poisson random walks. More results of this type can be found in the literature on hydrodynamic limits \cite[Section 11]{KL99}. As pointed out by a referee, an alternative approach is to regard $\xi$ as the spatial increment of a random growth model, as mentioned in Remark
\ref{R:EW}, and first show that the fluctuation field of the associated height function converges to the solution of the Edwards-Wilkinson equation \eqref{eq:EW}. One can then use the continuity of $f\to \partial_x f$ from ${\mathcal C}^{\alpha+1}_\ka \to {\mathcal C}^\alpha_\ka$ to deduce the convergence of $\xi$. This approach should also be applicable to $\xi$ with more general initial conditions, as well as to other models in the EW universality class, such as the symmetric simple exclusion processes~\cite{Ravishankar, KL99}.
\end{remark}

We will show that the solution of the SPDE \eqref{spde} has a similar expansion as in \eqref{eq:Zexp}, where $\tilde \xi$ is replaced by its Gaussian limit $\sqrt{\lambda}\Xi$.

\BP[Solution of SPDE] \label{P:uexp}
The SPDE \eqref{spde} has a well-defined mild Stratonovich solution $u$. For all $(t,x) \in [0,1]\times \R$,  $u$ admits the series representation
\begin{equ} [e:uexp]
u(t, x) = \sum_{k=0}^\infty u^{(k)}(t,x) =
1 +
 \sum_{k=1}^\infty (\beta\sqrt{\lambda})^k  \idotsint\limits_{0<s_1<\cdots<s_k<t \atop x_1, \ldots, x_k\in\R}
 \prod_{i=1}^k p_{s_{i+1}-s_{i}}(x_{i+1}-x_{i})  \prod_{i=1}^k \Xi(s_i, x_i) {\rm d}x_i\, {\rm d}s_i,
\end{equ}
where $(s_{k+1}, x_{k+1}):=(t,x)$, and the series converges in $L^1$.
\EP

\begin{remark}
Note that \eqref{e:uexp} formally resembles the discrete series \eqref{eq:Zexp} if we reverse time and identify the sequence
$t_0<t_1<\cdots <t_k<T$ with $t>s_k>\cdots >s_1>0$, and identify $(\tilde \xi(s, \cdot))_{s\in [0,T]}$ with $(\sqrt{\lambda}\Xi(1-s, \cdot))_{s\in [0,1]}$. The fact that  the iterated integrals in \eqref{e:uexp} are well-defined, and that each term in the series   \eqref{eq:Zexp} converges to the corresponding iterated integral in \eqref{e:uexp} (for which the precise statement is Lemma~\ref{L:termconv} below), will be clear in Section~\ref{S:conv}.
\end{remark}

To prove Theorem \ref{T:main}, it then suffices to show firstly that the series expansions for $Z^\xi_{\e^{-2}, \e^{3/2}\beta} (\e^{-2}t, \e^{-1}x)$ can be truncated to a fixed order with an arbitrarily small error that is uniform as $\e\downarrow 0$; secondly, show that terms of the series
in the expansion for $Z^\xi_{\e^{-2}, \e^{3/2}\beta} (\e^{-2} t, \e^{-1}x)$ converge in joint distribution (possibly with different choices of $(t,x)$) to terms of the series for $u$ in \eqref{e:uexp}. The following two lemmas address these two issues.

\BL[Bounds on partition functions]\label{L:Zbound}
Given $(t,x) \in [0,1]\times \R$, let
\[
\CZ_{\e,\beta}(t,x):= Z^\xi_{\e^{-2}, \e^{3/2}\beta} (\e^{-2}t,\e^{-1}x)
\]
 where $Z^\xi$ is  the polymer partition function defined as in \eqref{e:Z}, with terms of its expansion in \eqref{eq:Zexp} denoted by
 \begin{equ}[e:expand-rescaled-Z]
 \CZ_{\e, \beta}(t,x) = 1 +\sum_{k=1}^\infty \CZ_{\e,\beta}^{(k)}(t,x)\;.
 \end{equ}
  Then for any $\beta\in\R$,
\begin{equ} [e:Zbound]
\limsup_{\e\downarrow 0} \E[\CZ_{\e,\beta} (t,x)] <\infty.
\end{equ}
Furthermore,
\begin{equ} [e:Ztail]
\lim_{m\to\infty} \limsup_{\e\downarrow 0} \E\left[\Big| \CZ_{\e, \beta}(t,x)-1-\sum_{k=1}^m \CZ_{\e, \beta}^{(k)}(t,x) \Big| \right] = 0.
\end{equ}
\EL

\BL[Convergence of finite order chaos]\label{L:termconv}
Let $\CZ_{\e,\beta}(t,x)$ and $(\CZ_{\e,\beta}^{(k)}(t,x))_{k\in\N}$ be as above. Then for any $l\in\N$, and $(t_i, x_i) \in [0,1]\times \R$, $k_i\in\N$, for $1\leq i\leq l$, $(\CZ_{\e,\beta}^{(k_i)}(t_i, x_i))_{1\leq i\leq l}$ converge in joint distribution to $(u^{(k_i)}(1-t_i, x_i))_{1\leq i\leq l}$, where $u^{(k)}(1-t,x)$ is the $k$-th order term in the expansion for $u(1-t,x)$ in \eqref{e:uexp}.
\EL

The proof of Lemma \ref{L:Zbound} will be based on Poisson random walk calculations. The proof of Lemma \ref{L:termconv} will be based on functional analytic arguments,  where terms of the series are treated as ($\epsilon$-dependent) continuous functionals of the underlying noise $\e^{-\frac12}\tilde \xi(\e^{-2}\cdot, \e^{-1}\cdot)$ in weighted Besov-H\"older spaces. To control the convergence of these functionals, we will rely on local limit theorem type of estimates for the random walk transition kernel and its space-time gradients, which are of independent interest.

\medskip

\begin{proof}[Proof of Theorem \ref{T:main}]
To show that $(Z^\xi_{\e^{-2}, \e^{3/2}\beta} (\e^{-2}t_i, \e^{-1}x_i))_{1\leq i\leq l}$ converge in joint distribution to $(u(1-t_i, x_i))_{1\leq i\leq l}$, first note that by Proposition \ref{P:uexp} and \eqref{e:Ztail}, we can truncate the series for $\CZ_{\e,\beta}(t_i,x_i)$ and $u(1-t_i, x_i)$ up to the $m$-th order with a small error in $L^1$. The convergence in joint distribution of the truncated series then follows from Lemma~\ref{L:termconv}. Lastly, sending $m\to\infty$ proves Theorem \ref{T:main}.
\end{proof}

The rest of the paper is organized as follows. In Section \ref{S:SPDE}, we prove Proposition \ref{P:uexp}. In Section \ref{S:Poisson}, we prove some basic properties for the Poisson field $\xi$ and its correlation functions, which are then used in Section \ref{S:fieldconv} to prove Proposition \ref{P:xilim}. Sections \ref{S:polymer} and \ref{S:conv} are then devoted to the proof of Lemmas \ref{L:Zbound}, \ref{L:termconv}, respectively. In Appendix \ref{S:Besov}, we recall the basics of weighted Besov-H\"older spaces, in which the noise and the family of partition functions take their values. Lastly, in  Appendix \ref{sec:HK}, we prove some heat kernel and gradient estimates for the random walk which are essential for the proof.

Unless otherwise specified, we will assume $d=1$ in the rest of the paper, and we will write $f(x) \lesssim g(x)$ if $f(x)\leq Cg(x)$ for
some $C\in (0,\infty)$ uniformly in $x$.

\section{The limiting SPDE}\label{S:SPDE}

In this section, we will prove Proposition \ref{P:uexp}, namely the SPDE \eqref{spde} has a mild Stratonovich solution  in dimension $d=1$, following the approach used in \cite{hunuso11, song17}. For general $d$, this SPDE can be formally written as
\begin{equation}\label{spde2}
\frac{\partial }{\partial t}u(t,x)=\frac12 \Delta  u(t,x)+ \beta \sqrt\lambda u \,  \Noise(t,x)
\end{equation}
with
$$
\E [\Noise (t,x) \Noise(s,y)]=p_{|t-s|}(x-y) := (2\pi |t-s|)^{-d/2}e^{-\frac{|x-y|^2}{2|t-s|}}.
$$
 Here $\Xi$ is a  generalised space-time Gaussian field, the rigorous meaning of which is given below.

Let $\mathcal H$ be the completion of the space $C_c^\infty ([0, \infty)\times \R^d)$ of smooth functions with compact support endowed with the inner product
\[\langle f, g\rangle_{\mathcal H}=\int_{\R_+^2\times \R^2} f(s,x)g(t,y) p_{|s-t|}(x-y) dxdydsdt,\]
$\|\cdot\|_\H$ be the induced norm,  and $\{\Noise(g), g\in \H\}$ be an isonormal Gaussian process with covariance
\[\E[\Noise(f)\Noise(g)]=\langle f, g\rangle_\H.\]
 Note that $\H$ contains distributions which are not  classical measurable functions. For instance, it is straightforward  to verify that $\1_{[0,T]}\delta(x)\in \H$,  where $
\delta(x)$ is the Dirac delta function; when $d=1$, noting that $p_{|t-s|(x-y)}\le (2\pi |t-s|)^{-1/2}$, by \cite{pt00} it is possible that $f(s,x) \in \H$ is a distribution in terms of $s$.

 For $(t,x)\in\R_+\times\R^d$, define $W(t,x):=\Xi\left(\1_{[0,t]}(\cdot)\prod_{k=1}^d\1_{[0,x_k]}(\cdot)\right)$, where $x=(x_1,\dots, x_d)$ and $\1_{[0, a]}:=-\1_{[-a,0]}$ for $a<0$. Then $(W(t,x))_{(t,x)\in\R_+\times \R^d}$ is a classical Gaussian field. Moreover, $\Xi(t,x)=\frac{\partial^{1+d}}{
\partial t \partial x_1\dots\partial x_d}W(t,x)$ where the partial derivative is in the distribution sense.

To define the Stratonovich integral, we introduce some notation.  For positive numbers $\varepsilon$ and $\e'$, define
\begin{equation} \label{appr}
\Noise^{\e,{\e'}}(t,x):=\int_{0}^{t}\int_{\mathbb{R}} \psi_{{\e'}}(t-s)p_{\e}(x-y)\Noise(s,y){\rm d}y{\rm d}s=\Noise(\phi_{t,x}^{\varepsilon, {\e'}}),
\end{equation}
where
\begin{equation}\label{e:psip}
p_{\e}(x)=(2\pi \varepsilon )^{-\frac{d}{2}}e^{-\frac{|x|^{2}}{%
2\e }}, ~  x\in \R^d;~ ~\psi _{\e'}(t)=\dfrac{1}{\e' }I_{[0,\e']}(t),~ t\in \R,
\end{equation}
and
 \begin{equation}\label{e:phi}
 \phi_{t,x}^{\varepsilon, {\e'}}(s,y)=\psi_{{\e'}
}(t-s)p_{\e}(x-y).
\end{equation}
 Note that $\phi_{t,x}^{\varepsilon, \varepsilon'}$ belongs to $\mathcal H$, and hence $\Noise^{\e ,{\e'}
}(t,x)$ exists in the classical sense and is an approximation of $\Noise(t,x)$.

We define  the Stratonovich integral following \cite[Definition 4.1]{hunuso11}, which was also used in \cite{hunu09,hhnt15,song17}.

\begin{definition}[Stratonovich integral]\label{defstra}
\label{def2} Suppose that  $v=\{v(t,x),t\geq 0,x\in
\mathbb{R}^d\}$ is a random field satisfying
\begin{equation*}
\int_{0}^{T}\int_{\mathbb{R}^d}|v(t,x)|dxdt<\infty, \, \text{ a.s.},
\end{equation*}
and that the limit in probability $\lim\limits_{\e,{\e'} \downarrow 0}\int_{0}^{T}\int_{\mathbb{R}^d}v(t,x)
\Noise^{\e ,{\e'} }(t,x)dxdt$  exists. Then we denote the limit by
\begin{equation*}
\int_{0}^{T}\int_{\mathbb{R}^d}v(t,x)
\Noise(t,x)dxdt:=\lim_{\e,{\e'} \downarrow 0}\int_{0}^{T}\int_{\mathbb{R}^d}v(t,x)
\Noise^{\e,{\e'} }(t,x)dxdt.
\end{equation*}
and call it {\it Stratonovich integral}.
\end{definition}

Let $\mathcal F_t$ be the $\sigma$-algebra generated by  $\{W(s,x), 0\le s\le t, x\in \R^d\}$, and  a random field $\{F(t,x),t\ge0,x\in \R^d\}$ is said to be adapted  if $\{F(t,x), t\ge0\}$ is adapted to the filtration $\{\mathcal F_t\}_{t\ge 0}$ for all $x\in\R^d$.

The mild Stratonovich solution of \eqref{spde} is defined as follows.
\begin{definition}[Mild solution] \label{defmildstr} An adapted random field $u=\{u(t,x), t\ge0, x\in \R^d\}$ is a mild solution to (\ref{spde}) with initial condition $u_0\in C_b(\R^d)$, if for all $t\ge0$ and $x\in \R^d$ the following integral equation holds  a.s.:
\begin{equation}\label{mildstra}
u(t,x)= u_0(x)+\beta \sqrt \lambda \int_0^t\int_{\R^d}p_{t-s}(x-y)u(s,y)\Noise(s,y)dy ds,
\end{equation}
where the stochastic integral on the right-hand side is in the Stratonovich sense.
\end{definition}

Let $B$ be a standard $d$-dimensional Brownian motion independent of $\Noise$, and $B_t^x:=B_t+x$.  Following the discrete notation, we use $\P$ and $\E$ to denote respectively the probability and expectation for the noise $\Noise$, and use $\bP$  and $\bE$ for the Brownian motion $B$.

Fix $x\in \mathbb{R}^{d}$ and $t>0$. Recalling the notation in \eqref{e:psip}, we define \begin{equation}
I_{t,x}^{\varepsilon ,{\e'} }=\ \int_{0}^{t}\int_{\mathbb{R}%
^{d}}\delta^{\e,{\e'} }(B_{t-r}^x-y)\Noise(r,y)dydr:=\Noise(\delta^{\e,{\e'} }(B_{t-\cdot}^x-\cdot))\,  \label{e3}
\end{equation}
and
\begin{equation}
\delta^{\e,{\e'} }(B_{t-r}^x-y)=\int_{0}^{t}\psi _{{\e'}
}(t-s-r)p_{\varepsilon }(B_{s}^{x}-y)ds,  \label{e2}
\end{equation}
where $I_{t,x}^{\e, {\e'}}$ is a Wiener integral (conditional on $B$), of which the well-definedness will be justified in the following result.  Here,  by ``Wiener integral''  we mean $
\Xi(\phi)$ for $\phi\in\mathcal H$.  We would like to point out that $\delta^{\e,{\e'} }(B_{t-r}^x-y)$ is in fact a function of $(t-r, y, (B_s^x)_{s\in[0, t-r]})$, rather than a function of $B_{t-r}^x-y$. We use this notation because
formally $\delta^{\e,{\e'} }(B_{t-r}^x-y)$ converges to $\delta(B_{t-r}^x-y)$ as $(\varepsilon,\varepsilon')$ goes to zero. 
\begin{proposition}
\label{T:appr}
If $d=1$, then for all $
\e, {\e'} >0$, the term $\delta^{\e,{\e'} }(B_{t-\cdot}^x-\cdot)$ defined in
(\ref{e2}) belongs to $\mathcal{H}$ a.s.\ and the family of random variables $%
I_{t,x}^{\e,{\e'} }$ defined in (\ref{e3}) converges in $L^{p}$ for all $p\ge 2$ to a
limit denoted by
\begin{equation}\label{e:def-Itx}
I_{t,x}=\int_{0}^{t}\int_{\mathbb{R}}\delta (B_{t-r}^{x}-y)\Noise(r,y)dydr:= \Noise(\delta (B_{t-\cdot}^{x}-\cdot))\,,
\end{equation}
 where $\delta(B_{t-\cdot}^x-\cdot)$ is an $\H$-valued random variable given by the $L^2$-limit of $\delta^{\e,\e'}(B_{t-\cdot}^x-\cdot)$.
Conditional on $B$, $I_{t,x}$ is a Gaussian random variable with mean $0$
and variance%
\begin{equation}
\mathrm{Var}[I_{t,x}|B] =\int_{0}^{t}\int_{0}^{t}p_{|r-s|}(B_r-B_s)dr ds\,.
\label{e.2.14}
\end{equation}
\end{proposition}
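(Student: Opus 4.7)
The plan is to establish well-definedness, $L^{2}$-convergence, and conditional Gaussianity in one sweep by exploiting that $I^{\e,\e'}_{t,x}$ is, conditional on $B$, a Wiener integral against $\Noise$. The $\H$-isometry then reduces everything to computing $\H$-inner products of kernels of the form $\delta^{\e,\e'}(B^{x}_{t-\cdot}-\cdot)$, and the whole analytic content of the proposition boils down to one Gaussian convolution together with a single dimension-sensitive integrability check.

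The key computation is the $\H$-norm itself. Substituting \eqref{e2} and carrying out the $y,y'$ integration via the Gaussian semigroup identity $p_{a}\ast p_{b}=p_{a+b}$, I obtain
\begin{equation*}
\|\delta^{\e,\e'}(B^{x}_{t-\cdot}-\cdot)\|_{\H}^{2} \;=\; \int_{[0,t]^{4}} \psi_{\e'}(t-s-r)\,\psi_{\e'}(t-s'-r')\, p_{2\e+|r-r'|}(B_{s}-B_{s'})\, ds\, ds'\, dr\, dr'.
\end{equation*}
Taking $\bE$ in $B$ and using $\bE[p_{a}(B_{s}-B_{s'})] = (2\pi(a+|s-s'|))^{-1/2}$, the resulting deterministic four-fold integral is bounded by the $(\e,\e')$-independent majorant $(2\pi(|r-r'|+|s-s'|))^{-1/2}$, which is integrable on $[0,t]^{4}$ precisely because $\sigma^{-1/2}$ is integrable near zero in dimension one. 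This is the sole point at which $d=1$ enters, and it already yields $\delta^{\e,\e'}(B^{x}_{t-\cdot}-\cdot)\in\H$ almost surely.

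For the Cauchy property I would run the same computation on $\delta^{\e,\e'}-\delta^{\eta,\eta'}$ and invoke dominated convergence with the same majorant to conclude $\bE\|\delta^{\e,\e'}-\delta^{\eta,\eta'}\|_{\H}^{2}\to 0$ as $(\e,\e'),(\eta,\eta')\downarrow 0$. The conditional $\H$-isometry
\begin{equation*}
\E\bigl[(I^{\e,\e'}_{t,x}-I^{\eta,\eta'}_{t,x})^{2}\,\big|\,B\bigr] \;=\; \|\delta^{\e,\e'}-\delta^{\eta,\eta'}\|_{\H}^{2}
\end{equation*}
then gives $L^{2}(\Omega)$-Cauchyness of $I^{\e,\e'}_{t,x}$ and defines $I_{t,x}$. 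Sending first $\e'\downarrow 0$ (the time mollifier $\psi_{\e'}$ concentrates on $s=t-r$) and then $\e\downarrow 0$ in the displayed norm identity, followed by the change of variables $u=t-r$, $v=t-r'$, produces the variance $\int_{0}^{t}\!\int_{0}^{t} p_{|u-v|}(B_{u}-B_{v})\,du\,dv$ claimed in \eqref{e.2.14}.

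Conditional Gaussianity follows by extracting a subsequence along which $\|\delta^{\e,\e'}\|_{\H}^{2}$ converges a.s.\ and, jointly, $I^{\e,\e'}_{t,x}\to I_{t,x}$ in $L^{2}$ conditional on $B$; since each prelimit is $N(0,\|\delta^{\e,\e'}\|_{\H}^{2})$, the limit is Gaussian with the computed variance. The main obstacle is producing a uniform-in-$(\e,\e')$ integrable majorant for $p_{2\e+|r-r'|}(B_{s}-B_{s'})$: this random integrand is singular as $\e,|r-r'|\downarrow 0$, and in $d\geq 2$ the singularity is non-integrable after taking $\bE$ in $B$ (this is the Brownian self-intersection obstruction), so isolating the $d=1$ estimate and justifying the exchange of limits and integrals under it is the essential technical step.
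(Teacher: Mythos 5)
Your proof takes essentially the same route as the paper's: reduce everything to the $\H$-inner product of the mollified kernels, bound that inner product by a deterministic quantity that is integrable precisely when $d=1$, then run a dominated-convergence argument to get the Cauchy property in $L^2$ and pass to the limit in the variance formula. The one genuinely nicer step in your version is that you use the Gaussian semigroup identity $p_a * p_b = p_{a+b}$ to carry out the $y,y'$-integrations \emph{exactly}, yielding the clean expression $\|\delta^{\e,\e'}(B^x_{t-\cdot}-\cdot)\|_\H^2 = \int_{[0,t]^4}\psi_{\e'}\psi_{\e'}\,p_{2\e+|r-r'|}(B_s-B_{s'})$, whereas the paper first crushes the kernel $p_{|u-v|}(y-z)\le(2\pi|u-v|)^{-1/2}$ and then integrates the trivialized spatial Gaussians. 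Your formula makes the $\e$-dependence transparent and lets you pass $\e'\to 0$, then $\e\to 0$, in a very legible way.

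There is, however, one imprecision in your bound. You claim the four-fold integral is ``bounded by the $(\e,\e')$-independent majorant $(2\pi(|r-r'|+|s-s'|))^{-1/2}$, which is integrable on $[0,t]^4$.'' The integrand actually also carries the two factors $\psi_{\e'}(t-s-r)\psi_{\e'}(t-s'-r')$, which blow up as $\e'\downarrow 0$, so this is not a legitimate pointwise majorant in the four variables. What does work is to bound $p_{2\e+|r-r'|+|s-s'|}(0) \le (2\pi|s-s'|)^{-1/2}$, integrate out $(r,r')$ against the $\psi_{\e'}$'s (each is a probability density in its variable, so the $r,r'$-integral is at most $1$), and obtain the uniform bound $\int_{[0,t]^2}(2\pi|s-s'|)^{-1/2}\,ds\,ds'<\infty$. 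This $\psi$-averaging step is exactly what the paper offloads to \cite[Lemma A.3]{hunuso11}; you should either reproduce that small computation or cite it. With that repaired, the dominated-convergence passage, the Cauchy argument on $\delta^{\e,\e'}-\delta^{\eta,\eta'}$, the limiting variance, and the conditional Gaussianity via a subsequence are all correct and essentially as in the paper.
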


\begin{proof}
For $\varepsilon $, $\varepsilon ^{\prime }$, ${\sigma} $ and $\sigma'>0$,
\begin{align}
&\left\langle \delta^{\e,{\e'} }(B_{t-\cdot}^x-\cdot),\delta^{\sigma ,{\sigma'} }(B_{t-\cdot}^x-\cdot)\right\rangle _{\mathcal{H}} \notag\\
=&\int_{[0,t]^{4}}\int_{\mathbb{R}^{2}}p_{\e
}(B_{s}^{x}-y)p_{\sigma}(B_{r}^{x}-z)  \psi _{\e'}(t-s-u)\psi _{{\sigma'}}(t-r-v)\notag\\
&\qquad \qquad \qquad \times p_{|u-v|}(y-z)dydzdudvdsdr.\label{e4}
\end{align}
Noting that $p_{|u-v|}(y-z)\le (2\pi)^{-1/2}|u-v|^{-1/2},$ we have
\begin{align}
\int_{[0,t]^{2}}& \int_{\mathbb{R}^{2}}  p_{\e}(B_{s}^{x}-y)p_{\sigma}(B_{r}^{x}-z)  
\cdot \psi _{\e'}(t-s-u)\psi _{\sigma'}(t-r-v)p_{|u-v|}(y-z)\,dy\,dz\,du\,dv
\notag \\
&\leq  C \int_{[0,t]^2} \psi _{{\e'} }(t-s-u)\psi _{\sigma'}(t-r-v) |u-v|^{-1/2} dudv 
\quad \le C |s-r|^{-1/2}\label{e5}
\end{align}%
for some constant $C>0$, where the last step follows from \cite[Lemma A.3]{hunuso11}.
As a consequence, letting $\e=\e'$ and $\sigma=\sigma'$, by \eqref{e4} and (\ref{e5}), we have
\begin{equation}\label{e:ubd}
\sup_{\e,{\e'}>0}\left\|\delta^{\e,{\e'} }(B_{t-\cdot}^x-\cdot)\right\| _{\mathcal{H}}^{2} \le Ct^{3/2}<\infty,
\end{equation}
where $\|\cdot\|_\H$ is the norm induced by the inner product $\langle \cdot, \cdot \rangle_\H$,
which implies that almost surely $\delta^{\e,{\e'} }(B_{t-\cdot}^x-\cdot)$ belongs to
the Hilbert space $\mathcal{H}$ for all $\varepsilon $ and ${\e'} >0$. Therefore,
the random variables $I_{t,x}^{\varepsilon ,{\e'} }$ are well defined and we have
\begin{align}\label{e:2.13}
\bE\,\E[I_{t,x}^{\e ,{\e'} }I_{t,x}^{\sigma,\sigma'}]=&\bE\left\langle \delta^{\e,{\e'} }(B_{t-\cdot}^x-\cdot),\delta^{\sigma,{\sigma'} }(B_{t-\cdot}^x-\cdot)\right\rangle _{\mathcal{%
H}}.
\end{align}%
For any $s\neq r$ and $B_{s}\neq B_{r}$, as $\varepsilon $, $\varepsilon
^{\prime }$, $\sigma$ and $\sigma'$ tend to zero, the left-hand
side of the inequality \ (\ref{e5}) converges to $p_{|s-r|}(B_s-B_r)$. Therefore, by
dominated convergence theorem we obtain that, as $\varepsilon $, $\varepsilon ^{\prime }$, $\sigma $ and $\sigma'$ tend to zero,
  $\bE \E[I_{t,x}^{\e
,{\e'} }I_{t,x}^{\sigma, \sigma'}]$ converges to $\bE\int_{[0,t]^2}p_{|s-r|}(B_s-B_r) dsdr$ which is finite by Lemma \ref{exp-int} below. As a consequence,
\begin{equation}\label{e:2.14}
\bE\,\E \Big[\Big( I_{t,x}^{{\varepsilon },{{\e'} }}-I_{t,x}^{\sigma,\sigma' }\Big) ^{2}\Big]
=\bE\,\E \Big[\Big( I_{t,x}^{{\varepsilon },{{\e'} }\ }\Big) ^{2}\Big]
-2\bE\,\E \Big[\Big( I_{t,x}^{{\varepsilon },{{\e'} }}I_{t,x}^{\sigma,\sigma' }\Big)\Big]
+\bE\,\E \Big[\Big(
I_{t,x}^{\sigma,\sigma' }\Big)
^{2}\Big]\rightarrow 0\,.
\end{equation}%

 It also follows from \eqref{e:2.13} and \eqref{e:2.14} that,  for all  sequences $\e_n$ and $\e_n'$ converging to zero, $\delta^{\e_n,\e_n'}(B_{t-\cdot}^x-\cdot)$ is a Cauchy sequence in $L^2(\Omega,\mathbf P; \H)$ with $(\Omega,\mathcal G, \mathbf P)$ being the probability space generated by $B$. We denote the limit in $L^2(\Omega, \mathbf P; \H)$ of $\delta^{\e,\e'}(B_{t-\cdot}^x-\cdot)$ by $\delta(B_{t-\cdot}^x-\cdot)$.

Now, noting that $I_{t,x}^{\e_n,\e_n'}$ is a centered Gaussian random variable conditional on $B$  and hence  $\E[(I_{t,x}^{\e_n,\e_n'})^{2n}] =(2n-1)!! \,\left(\E[(I_{t,x}^{\e_n,\e_n'})^2]\right)^n$ for  $n\in \mathbb N$,  we have  that \ $I_{t,x}^{\e_n,\e'_n}$ is a Cauchy
sequence in $L^{p}$ for $p\ge 2$ for all  sequences $\e_n$ and $\e_n'$ converging to zero. As a consequence, $I_{t,x}^{\e_n,\e_n'}$
converges in  $L^{p}$ to a limit denoted by $I_{t,x}$, which does not depend
on the choice of the sequences $\e_n$ and $\e_n'$. \
Finally, \ (\ref{e.2.14}) can be shown by a similar argument.
\end{proof}

\begin{lemma} \label{exp-int}
The following bound holds  if and only if $d=1$:
\[\bE  \left[\int_0^t\int_0^t p_{|r-s|}(B_r-B_s)drds \right] <\infty \;.
\]
Furthermore, when $d=1$, one has
\begin{equ}[e:exp-int]
\bE\left[\exp\left(\lambda \int_0^t \int_0^t p_{|r-s|}(B_r-B_s)drds \right)\right] <\infty \text{ for all } \lambda >0 \;.
\end{equ}
\end{lemma}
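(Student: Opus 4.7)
The plan is to reduce the first claim to an explicit Gaussian calculation, and then to observe that in $d=1$ the integrand in question is \emph{deterministically} bounded, which trivializes the exponential moment bound.

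\textbf{Step 1 (the iff).} Since the integrand $p_{|r-s|}(B_r-B_s)$ is non-negative, Tonelli allows us to exchange the order of integration:
\[
\bE\left[\int_0^t\int_0^t p_{|r-s|}(B_r-B_s)\,dr\,ds\right]
=\int_0^t\int_0^t \bE\bigl[p_{|r-s|}(B_r-B_s)\bigr]\,dr\,ds.
\]
For $r\neq s$, the increment $B_r-B_s$ is $N(0,|r-s|I_d)$, so its density is $p_{|r-s|}$ itself, and hence
\[
\bE\bigl[p_{|r-s|}(B_r-B_s)\bigr] = \int_{\R^d} p_{|r-s|}(y)^2\,dy = (p_{|r-s|}*p_{|r-s|})(0) = p_{2|r-s|}(0) = (4\pi|r-s|)^{-d/2}.
\]
The double integral $\int_0^t\int_0^t |r-s|^{-d/2}\,dr\,ds$ converges if and only if $d/2<1$, i.e.\ $d=1$. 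This gives both directions of the iff.

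\textbf{Step 2 (exponential moment in $d=1$).} In dimension one we have the pointwise bound
\[
p_t(x) = (2\pi t)^{-1/2}e^{-x^2/(2t)} \leq (2\pi t)^{-1/2},
\]
which yields the \emph{deterministic} upper bound
\[
\int_0^t\int_0^t p_{|r-s|}(B_r-B_s)\,dr\,ds
\leq \int_0^t\int_0^t (2\pi|r-s|)^{-1/2}\,dr\,ds
= \frac{4}{3\sqrt{2\pi}}\,t^{3/2}=:C_t,
\]
where the last equality is an elementary computation. Since $C_t$ does not depend on $B$, we immediately get
\[
\bE\left[\exp\left(\lambda\int_0^t\int_0^t p_{|r-s|}(B_r-B_s)\,dr\,ds\right)\right] \leq e^{\lambda C_t}<\infty
\]
for every $\lambda>0$, completing \eqref{e:exp-int}.

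There is no real obstacle here: the entire argument rests on the two observations that (a) the expectation of the heat kernel evaluated at the Brownian increment is the same heat kernel at time doubled, evaluated at $0$, and (b) in $d=1$, the heat kernel has a uniform-in-space pointwise bound that is $L^1$ in time on $[0,t]^2$. The only thing worth flagging is that the $L^\infty$ bound $p_t(x)\le (2\pi t)^{-1/2}$ is specific to $d=1$; for $d\geq 2$ no such deterministic bound would be integrable, which is consistent with the first half of the lemma.
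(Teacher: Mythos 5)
Your proof is correct and uses essentially the same argument as the paper: Tonelli plus the identity $\bE[p_{|r-s|}(B_r-B_s)]=p_{2|r-s|}(0)=(4\pi|r-s|)^{-d/2}$ for the iff, and the deterministic bound $p_t(x)\le(2\pi t)^{-1/2}$ (valid only in $d=1$) for the exponential moment. (A minor slip: $\int_0^t\int_0^t(2\pi|r-s|)^{-1/2}\,dr\,ds=\frac{8}{3\sqrt{2\pi}}t^{3/2}$, not $\frac{4}{3\sqrt{2\pi}}t^{3/2}$, but the constant is irrelevant to the conclusion.)
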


\begin{proof}
First note that
\begin{equs}
\bE & \left[\int_0^t\int_0^t p_{|r-s|}(B_r-B_s)drds\right]
=2 \int_0^t\int_0^r  \int_{\R^{d}}   p_{r-s}(z)^2 dzdrds
\\
&=2\int_0^t\int_0^r p_{2(r-s)}(0) dsdr
= 2\int_0^t\int_0^r (4\pi(r-s))^{-d/2}dsdr,
\end{equs}
which is finite if and only if $d=1$. When $d=1$,
one has the deterministic bound
$$\int_0^t\int_0^t p_{|r-s|}(B_r-B_s)drds\le \int_0^t\int_0^t (2\pi |r-s|)^{-1/2}drds<\infty,$$ and hence \eqref{e:exp-int} follows.
\end{proof}

From now on, we only consider the case $d=1$.  We shall show that a mild solution to \eqref{spde} can be obtained by the following Picard iteration. Let $u_0(t,x)\equiv 1$ and
\begin{equation}\label{e:u-n}
u_{n}(t,x)= 1 +\beta\sqrt \lambda\int_0^t \int_{\mathbb R} p_{t-s}(x-y) u_{n-1}(s,y)\Noise(s,y)dy ds, ~n=1,2,\dots.
\end{equation}
Then we have  $$u_n(t,x)-u_{n-1}(t,x)= u^{(n)}(t,x), ~n=1, 2, \dots, $$
where $u^{(n)}$ is as in \eqref{e:uexp}, namely
\begin{equ}[e:def-Jn]
u^{(n)}(t,x)
=(\beta\sqrt\lambda)^n\int_{[0,t]^n_< \times \R^n}
\prod_{k=1}^{n}
p_{s_{k+1}-s_{k}}(y_{k+1}-y_{k})
 \prod_{k=1}^n\Noise(s_k,y_k)dy_kds_k
\end{equ}
where $[0,t]^n_<= \{ (s_1,\cdots,s_n)\in \R^n\;:\; 0< s_{1}<\cdots<s_{n}<t \}$ with the convention $ y_{n+1}=x, s_{n+1}=t$.
We remark that the multiple integral on the RHS of \eqref{e:def-Jn} is classically meaningful, which we explain in Section~\ref{S:conv}
(Remark~\ref{rem:multi-int}) when we discuss  the convergence of the discrete multiple integrals to them.

The above Picard iteration suggests that formally
\[
u(t,x)=1+\sum_{n=1}^\infty u^{(n)}(t,x)
\] is a mild solution to \eqref{spde}, if it is well-defined.

 Indeed, in the following we shall show that $u_n(t,x)=1+\sum_{k=1}^n u^{(k)}(t,x)$ converges in $L^1$, as $n\to \infty$, to the Feynman-Kac type expression
\begin{equ}[e:FC-u]
 \bE\left[\exp\left(\beta\sqrt\lambda\int_0^t \int_{\mathbb R} {\delta}(B^x_{t-s}-y)\Noise(s,y)dyds\right)\right],
 \end{equ}
which will be proven to be a mild Stratonovich solution to \eqref{spde}.

\begin{lemma}\label{lem:2.5}
\eqref{e:FC-u} is $L^1$ integrable with respect to $\Noise$.
\end{lemma}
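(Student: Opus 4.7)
Write $F(\Noise) := \bE\bigl[\exp\bigl(\beta\sqrt{\lambda}\, I_{t,x}\bigr)\bigr]$ with $I_{t,x}$ as in \eqref{e:def-Itx}. Since $F\geq 0$, showing the $L^1$ integrability is the same as showing $\E[F(\Noise)]<\infty$. The plan is to interchange the two expectations via Tonelli, evaluate the inner expectation using the conditional Gaussianity of $I_{t,x}$ given $B$ established in Proposition~\ref{T:appr}, and then appeal to the exponential moment bound \eqref{e:exp-int} of Lemma~\ref{exp-int}.

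Concretely, since $B$ and $\Noise$ are independent and the integrand is nonnegative, Tonelli gives
\[
\E[F(\Noise)] \;=\; \bE\bigl[\E\bigl[\exp(\beta\sqrt{\lambda}\, I_{t,x})\,\big|\, B\bigr]\bigr].
\]
By Proposition~\ref{T:appr}, conditional on $B$ the variable $I_{t,x}$ is centered Gaussian with variance
\[
V_B \;:=\; \int_0^t\!\!\int_0^t p_{|r-s|}(B_r-B_s)\,dr\,ds,
\]
so the inner conditional expectation equals $\exp\bigl(\tfrac12 \beta^2 \lambda\, V_B\bigr)$. Hence
\[
\E[F(\Noise)] \;=\; \bE\bigl[\exp(\tfrac12 \beta^2 \lambda\, V_B)\bigr],
\]
which is finite by \eqref{e:exp-int} (applied with constant $\tfrac12 \beta^2 \lambda$), using that $d=1$.

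The one delicate point is that $I_{t,x}$ is only constructed as an $L^2$ limit of the regularizations $I^{\e,\e'}_{t,x}$ from \eqref{e3}, so joint measurability in $(B,\Noise)$ and the literal use of the Gaussian Laplace transform need a small approximation step. I would first perform the above computation at the smoothed level: each $I^{\e,\e'}_{t,x}$ is jointly measurable, conditionally centered Gaussian given $B$ with conditional variance $\|\delta^{\e,\e'}(B^x_{t-\cdot}-\cdot)\|_{\H}^2$, which is bounded uniformly in $(\e,\e')$ by \eqref{e:ubd} and converges monotonically (up to a constant) to $V_B$. Tonelli then yields $\E[\bE[\exp(\beta\sqrt{\lambda}\,I^{\e,\e'}_{t,x})]] = \bE[\exp(\tfrac12\beta^2\lambda\,\|\delta^{\e,\e'}\|_{\H}^2)]$, which is dominated by the integrable quantity supplied by Lemma~\ref{exp-int}. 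Extracting an a.s.\ convergent subsequence from the $L^2$ convergence in Proposition~\ref{T:appr} and applying Fatou's lemma passes the bound to $F(\Noise)$, giving the required $L^1$ estimate. This uniform-integrability step is the only mildly technical aspect of the proof.
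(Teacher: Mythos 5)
Your argument is correct and is essentially the paper's proof: interchange the expectations via Tonelli, compute the conditional Gaussian Laplace transform of $I_{t,x}$ given $B$ using Proposition~\ref{T:appr}, and invoke Lemma~\ref{exp-int}. The extra approximation/Fatou step you supply makes precise an identity that the paper states without comment; note, however, that your parenthetical claim that $\|\delta^{\e,\e'}(B^x_{t-\cdot}-\cdot)\|_\H^2$ converges \emph{monotonically} to $V_B$ is neither established nor needed, since the deterministic bound behind \eqref{e:ubd} (coming from \eqref{e5}) already gives a uniform-in-$(\e,\e',B)$ constant dominating $\exp\bigl(\tfrac12\beta^2\lambda\,\|\delta^{\e,\e'}\|_\H^2\bigr)$, so dominated convergence or Fatou applies directly.
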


\begin{proof}
By Proposition~\ref{T:appr},
\begin{align*}
&\E \bE \left[\exp\left(\beta\sqrt\lambda\int_0^t \int_{\mathbb R} {\delta}(B_{t-s}^x-y)\Noise(s,y) dyds\right) \right]\\
&=\bE \left[\exp\left(\frac{\beta^2\lambda}{2} \int_0^t \int_0^t p_{|r-s|}(B_r-B_s)drds \right) \right],
\end{align*}
and combining with Lemma \ref{exp-int}, we know that the Feynman-Kac type expression \eqref{e:FC-u}
 is well-defined (i.e. integrable with respect to $\Noise$) when $d=1$.
 \end{proof}

Now we show that the series generated by the Picard iteration in \eqref{e:def-Jn} and a Taylor expansion of the Feynman-Kac expression \eqref{e:FC-u} coincide term by term:
\begin{lemma}\label{lem-Jn}
For each $n\in \Z_+$ one has
\begin{equ}[e:series=TaylorFC]
u^{(n)}(t,x)=\frac{(\beta\sqrt \lambda)^n}{n!}\bE \left[(I_{t,x})^n\right]\;.
\end{equ}
where $I_{t,x}$ is defined in \eqref{e:def-Itx}.
\end{lemma}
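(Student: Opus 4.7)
The plan is to verify the identity first at the level of the mollified noise $\Noise^{\e,\e'}$ and then pass to the limit $\e,\e'\downarrow 0$. Starting from the definitions \eqref{e3}--\eqref{e2} and applying Fubini to the smooth bounded kernels (using that $\psi_{\e'}(t-s-r)$ vanishes for $r>t-s$), the Wiener integral admits the pathwise representation
\begin{equation}
I^{\e,\e'}_{t,x}=\int_0^t \Noise^{\e,\e'}(t-s,B_s^x)\,ds.
\end{equation}
Raising to the $n$-th power and using the symmetry of the integrand in $(s_1,\ldots,s_n)$ gives the simplex expression
\begin{equation}
(I^{\e,\e'}_{t,x})^n = n!\int_{0<s_1<\cdots<s_n<t}\prod_{k=1}^n \Noise^{\e,\e'}(t-s_k,B_{s_k}^x)\,ds_k.
\end{equation}

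Next I would take the expectation $\bE$ over $B$ (for fixed $\Noise$, so that $\Noise^{\e,\e'}$ is a deterministic smooth function). By the Markov property, for $0<s_1<\cdots<s_n<t$ the joint density of $(B_{s_1}^x,\ldots,B_{s_n}^x)$ at $(z_1,\ldots,z_n)$ is $\prod_{k=1}^n p_{s_k-s_{k-1}}(z_k-z_{k-1})$ with $s_0:=0$ and $z_0:=x$, so
\begin{equation}
\bE\Big[\prod_{k=1}^n \Noise^{\e,\e'}(t-s_k,B_{s_k}^x)\Big] = \int_{\R^n}\prod_{k=1}^n p_{s_k-s_{k-1}}(z_k-z_{k-1})\prod_{k=1}^n \Noise^{\e,\e'}(t-s_k,z_k)\,dz_k.
\end{equation}
The time-reversing substitution $\tilde s_j:=t-s_{n+1-j}$, $\tilde z_j:=z_{n+1-j}$, combined with $p_r(y)=p_r(-y)$, reshapes the kernel into $\prod_{j=1}^n p_{\tilde s_{j+1}-\tilde s_j}(\tilde z_{j+1}-\tilde z_j)$ with $\tilde s_{n+1}:=t$, $\tilde z_{n+1}:=x$, which is exactly the kernel appearing in \eqref{e:def-Jn}. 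This yields the \emph{smoothed identity}
\begin{equation}\label{eq:sid-plan}
\frac{(\beta\sqrt\lambda)^n}{n!}\bE\big[(I^{\e,\e'}_{t,x})^n\big]=u^{(n),\e,\e'}(t,x),
\end{equation}
where $u^{(n),\e,\e'}$ is the analogue of \eqref{e:def-Jn} with $\Noise$ replaced by $\Noise^{\e,\e'}$.

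Finally I would pass to the limit $\e,\e'\downarrow 0$ in \eqref{eq:sid-plan}. On the left, Proposition~\ref{T:appr} gives $I^{\e,\e'}_{t,x}\to I_{t,x}$ in $L^2$; conditionally on $B$ both variables are centred Gaussian with variances that converge, so Wick's formula reduces the $n$-th moments to products of covariances that converge, while the uniform $\H$-bound \eqref{e:ubd} together with the exponential integrability in Lemma~\ref{exp-int} furnishes the dominating function required to exchange $\bE$ with the limit, giving $\bE[(I^{\e,\e'}_{t,x})^n]\to\bE[(I_{t,x})^n]$ in $L^1(\E)$. On the right, $u^{(n),\e,\e'}(t,x)\to u^{(n)}(t,x)$ in $L^2(\E)$ follows from the finiteness of the $\H^{\otimes n}$-norm of the symmetric kernel $\prod_k p_{s_{k+1}-s_k}(y_{k+1}-y_k)$ (an iterated version of the bound \eqref{e5}) and the defining approximation of the iterated Stratonovich integral (cf.\ Remark~\ref{rem:multi-int}). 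The main obstacle is precisely this second convergence, since $u^{(n)}(t,x)$ has no pathwise meaning a priori; the point is that both sides of \eqref{eq:sid-plan} are Cauchy in $L^2(\E)$, so the identification of their limits is forced.
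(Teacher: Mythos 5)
Your proof follows essentially the same computational route as the paper's (expand $(I_{t,x})^n$, symmetrize to the simplex, exchange $\bE$ with the noise integration, identify the joint Brownian density with the heat-kernel product), but makes the formal exchange rigorous by first mollifying the noise and then passing to the limit. Your mollified-level calculations are correct, including the pathwise representation of $I^{\e,\e'}_{t,x}$ and the time-reversal substitution—extra bookkeeping that the paper sidesteps by parameterizing $I_{t,x}$ through $B_{t-\cdot}$ from the outset—and the left-hand side limit passage is fine, since conditional Gaussianity together with the deterministic bound coming from \eqref{e5} on $\|\delta^{\e,\e'}(B^x_{t-\cdot}-\cdot)\|_\H$ yields domination.

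The soft spot is the right-hand side limit $u^{(n),\e,\e'}\to u^{(n)}$. Your invocations of the $\H^{\otimes n}$-norm and of Remark~\ref{rem:multi-int} do not pin this down: finiteness of the $\H^{\otimes n}$-norm controls Skorohod multiple integrals, whereas $u^{(n)}$ is the Stratonovich-type iterated integral made sense of via the Schauder/Young machinery of Section~\ref{S:conv}, and Remark~\ref{rem:multi-int} gives a \emph{definition} of $u^{(n)}$, not its identification with a mollification limit. What is actually needed is a statement that $\Noise^{\e,\e'}\to\Noise$ in $\CC^{-1/2-\delta}_\kappa$, after which the continuity estimates \eqref{e:Young} and Lemma~\ref{lem:Schauder} propagate the convergence through the Picard recursion to yield $u^{(n),\e,\e'}\to u^{(n)}$. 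The paper's short proof implicitly relies on exactly this identification without justifying it (it interchanges $\bE$ with the formal iterated integral of $\delta$'s and calls this Fubini), so your regularized approach is in a sense the more honest one—but the gap should be closed along the Besov--Hölder/Schauder route rather than by the $\H^{\otimes n}$-norm argument.
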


\begin{proof}
 By Proposition \ref{T:appr}, we have that $I_{t,x}= \Noise(\delta (B_{t-\cdot}^{x}-\cdot))$ is $L^p$-integrable for all $p\ge 2$. Recall that  $\delta (B_{t-\cdot}^{x}-\cdot)$ belongs to $\H$ a.s. and is given by the limit of $\delta^{\e, \e'} (B_{t-\cdot}^{x}-\cdot)$. Hence, 
$$
\|\bE [\delta (B_{t-\cdot}^{x}-\cdot)-\delta^{\e,\e'} (B_{t-\cdot}^{x}-\cdot)] \|_{\H}^2\le \bE \| \delta (B_{t-\cdot}^{x}-\cdot)-\delta^{\e,\e'} (B_{t-\cdot}^{x}-\cdot) \|_{\H}^2 \to 0
$$ 
as $(\e, \e')$ goes to zero. Here we used the following Jensen's inequality on $\H$: denoting $f:=\delta (B_{t-\cdot}^{x}-\cdot)-\delta^{\e,\e'} (B_{t-\cdot}^{x}-\cdot)$, 
 \[\|\mathbf Ef \|_{\mathcal H}^2 = \sum_{k=1}^\infty |  \langle \mathbf E f, e_k\rangle_{\mathcal H}|^2 \le   \sum_{k=1}^\infty\mathbf E\left[|\langle f, e_k\rangle_{\mathcal H}|^2\right]=\mathbf E[\|f\|_{\mathcal H}^2],\] 
where $\{e_k, k\in \mathbb N\}$ is an orthonormal basis of $\H$. Therefore, 
$$
\bE [\delta (B_{t-\cdot}^{x}-\cdot)]=\lim_{(\e,\e')\to0}\bE[\delta^{\e,\e'} (B_{t-\cdot}^{x}-\cdot)],
$$ 
where the limit is taken in the space $\H$. Following the proof of  Proposition \ref{T:appr}, one can prove that $(\delta^{\e,\e'} (B_{t-\cdot}^{x}-\cdot))^{\otimes n}$ converges to $(\delta (B_{t-\cdot}^{x}-\cdot))^{\otimes n}$ in $L^2(\Omega;\H^{\otimes n})$, and similarly,
\begin{equation}\label{e:2.19}
 \bE[(\delta (B_{t-\cdot}^{x}-\cdot))^{\otimes n}]=\lim_{(\e,\e')\to0} \bE[(\delta^{\e,\e'} (B_{t-\cdot}^{x}-\cdot))^{\otimes n}]
 \end{equation}
 in the space $\H^{\otimes n}$.

Now, we have
\begin{align*}\label{fkhn}
&\frac1{n!}\bE\left[ (I_{t,x})^n\right]\notag\\
&= \frac{1}{n!}\bE \left[\int_{[0,t]^n}\int_{\R^{n}}{{\delta}}(B_{t-s_1}^x-y_1) \cdots {{\delta}}
(B_{t-s_n}^x-y_n)\Noise(s_1,y_1)ds_1dy_1\dots\Noise(s_n, y_n)ds_ndy_n\right]\notag
\\
&= \frac{1}{n!}\int_{[0,t]^n}\int_{\R^{n}}\bE \Big[{{\delta}}(B_{t-s_1}^x-y_1) \cdots {{\delta}}
(B_{t-s_n}^x-y_n)\Big]\Noise(s_1,y_1)dy_1ds_1\dots\Noise(s_n, y_n)dy_nds_n\notag
\\
&=\int_{[0,t]_<^n}\int_{\R^{n}}\lim_{(\e, \e')\to 0}\bE \left[{{\delta}^{\e,\e'}}(B_{t-s_1}^x-y_1) \cdots {{\delta}^{\e,\e'}}
(B_{t-s_n}^x-y_n) \right] \Noise(s_1,y_1)dy_1ds_1\dots\Noise(s_n, y_n)dy_nds_n \notag \\
&= \int_{[0,t]_<^n}\int_{\R^{n}} p_{s_{2}-s_{1}}(y_{2}-y_{1})\cdots p_{t-s_{n}}(x-y_{n})
\Noise(s_1,y_1)dy_1ds_1\dots\Noise(s_n, y_n)dy_nds_n, ~~a.s.,
\end{align*}
where we have interchanged $\bE[\cdot]$ with the stochastic integration by stochastic Fubini theorem for Stratonovich integrals, which follows from stochastic Fubini Theorem for Skorohod integrals (\cite{DaPrato14,KRT07}) and Hu-Meyer formula (\cite{HM88}), and the last second step follows from \eqref{e:2.19}. Comparing with
\eqref{e:def-Jn}, we obtain the desired result.
\end{proof}

\begin{proof}[Proof of Proposition~\ref{P:uexp}]
Firstly, we note that
$u_n(t,x)=1+\sum_{k=1}^n u^{(k)}(t,x)$  converges to $u(t,x)$ as defined in \eqref{e:uexp} in $L^1$ as $n\to\infty$, since by Lemma \ref{lem-Jn} and Lemma \ref{lem:2.5},
\begin{align*}
&\E\left[\sum_{n=0}^\infty |u^{(n)}(t,x)|\right] \le \sum_{n=0}^\infty\frac{(\beta\sqrt\lambda)^n}{n!}\E\bE \left[ |I_{t,x}|^n\right]\\
 & =\E\bE\left[e^{\beta\sqrt\lambda |I_{t,x}|}\right]  \le \E\bE\left[ e^{\beta\sqrt\lambda I_{t,x}}+e^{-\beta\sqrt\lambda I_{t,x}}\right]<\infty.
\end{align*}
Thus \eqref{e:uexp} equals the Feynman-Kac representation \eqref{e:FC-u}.

To conclude the proof,  we shall verify that \eqref{e:FC-u} is a mild Stratonovich solution to \eqref{spde},  following the approach used in \cite{song17}.

Consider the following approximation of \eqref{spde}:
\begin{equation}\label{appspde}
\begin{cases}
\dfrac{\partial}{\partial t}u^{\varepsilon, {\e'}}(t,x)=\dfrac12 {\Delta} u^{\varepsilon, {\e'}}(t,x)+\beta\sqrt\lambda u^{\varepsilon, {\e'}}(t,x)\Noise^{\varepsilon, {\e'}}(t,x),\\
u^{\varepsilon,{\e'}}(0,x)\equiv1,
\end{cases}
\end{equation}
where $\Noise^{\e, {\e'}}(t,x)$ is defined in \eqref{appr}.

By the classical Feynman-Kac formula, we have
\begin{align*}
u^{\varepsilon, {\e'}}(t,x)=\bE\left[\exp\left(\beta\sqrt\lambda\int_0^t \Noise^{\varepsilon, {\e'}}(r,B_{t-r}^x)dr \right)\right]=\bE\left[\exp\left(\beta\sqrt\lambda I_{t,x}^{\e,\e'}\right)\right]
\end{align*}
where $I_{t,x}^{\e, {\e'}}=\Noise(\delta^{\e,\e'}(B_{t-\cdot}^x-\cdot))$ is defined in \eqref{e3} and the last equality follows from  Fubini's theorem. It is a mild solution to (\ref{appspde}), i.e.,
\begin{equation}\label{mildapp}
u^{\varepsilon, {\e'}}(t,x)=1+\beta\sqrt\lambda\int_0^t\int_{\R}p_{t-s}(x-y)u^{\varepsilon, {\e'}}(s,y)\Noise^{\varepsilon, {\e'}}(s,y)dyds.
\end{equation}

To prove the result, it suffices to show that as $(\varepsilon,{\e'})$ tends to zero, both sides of  (\ref{mildapp}) converge in probability to those of (\ref{mildstra})   respectively, with $u(t,x)$ given by (\ref{e:FC-u}).

First, we show some convergence results for $u^{\e,\e'}(t,x)$.  By Proposition~\ref{T:appr}, as $(\varepsilon, {\e'})\to 0$, $I_{t,x}^{\e,{\e'}}=\Noise(\delta^{\e,\e'}(B_{t-\cdot}^x-\cdot))$ converges to  $\Noise(\delta(B_{t-\cdot}^x-\cdot))$ in probability, and
$$\E[|u^{\varepsilon, {\e'}}(t,x)|^p]\le \E\,\bE\left[\exp\left(p\beta\sqrt\lambda I_{t,x}^{\e,\e'})\right)\right]=\bE\left[\exp\left(\frac{1}{2}p^2\beta^2\lambda \|\delta^{\e,\e'}(B_{t-\cdot}^x-\cdot)\|_{\H}^2\right)\right],$$
 which, by \eqref{e:ubd}, is bounded by $Ce^{Ct^{3/2}}$ for some constant $C$ independent of $(\e,{\e'})$.  This yields the $L^p$-convergence of $u^{\varepsilon, {\e'}}(t,x)$ to $u(t,x)$ for all $p>1.$

Furthermore, we show that $u^{\e,\e'}(t,x)$ converges to $u(t,x)$ in $\mathbb D^{1,2}$, i.e.,
  \begin{equation}\label{e:D12}
  \lim_{\e,\e'\downarrow 0} \E[|u^{\e,\e
'}(t, x)-u(t,x)|^2]+\E[\|Du^{\e,\e
'}(t, x)-Du(t,x)\|_\H^2]=0,
\end{equation}
 where $D$ is the Malliavin derivative (see \cite[Section 1.2]{nualart06} for details). To prove this, noting that the Malliavin derivative is closable (\cite[Proposition 1.2.1]{nualart06}), it suffices to prove
\begin{equation}\label{e:Du}
\lim_{\e,\e',\sigma,\sigma'\downarrow 0}\E\|Du^{\e,\e'}(t,x)-Du^{\sigma,\sigma'}(t,x)\|_{\H}^2=0.
\end{equation}
  By the chain rule (\cite[Proposition 1.2.3]{nualart06}),
\begin{align}\label{e:Due}
Du^{\e,\e'}(t,x)&=\beta\sqrt\lambda\bE\left[\exp\left(\beta\sqrt\lambda I_{t,x}^{\e,\e'}\right)DI_{t,x}^{\e,\e'}\right]\notag\\
&=\beta\sqrt\lambda\bE\left[\exp\left(\beta\sqrt\lambda I_{t,x}^{\e,\e'}\right)\delta^{\e,\e'}(B_{t-\cdot}^x-\cdot)\right].
\end{align}
Thus, letting $B^1$ and $B^2$ be two independent 1-dimensional Brownian motions, and $I_{t,x}^{\e,\e'}(B^i)$ be defined as in \eqref{e3} with $B$ replaced by $B^i$ for $i=1,2$, we have
\begin{align*}
&\E\left\langle Du^{\e,\e'}(t,x),Du^{\sigma,\sigma'}(t,x)\right\rangle_{\H}^2\\
=&(\beta^2\lambda)\E\,\bE\left[\exp\left(\beta\sqrt\lambda\left(I_{t,x}^{\e,\e'}(B^1)+I_{t,x}^{\sigma,\sigma'}(B^2)\right)\right) \left\langle \delta^{\e,\e'}(B_{t-\cdot}^1+x-\cdot), \delta^{\sigma,\sigma'}(B_{t-\cdot}^2+x-\cdot)\right\rangle_\H\right]\\
=&(\beta^2\lambda)\E\,\bE\bigg[\exp\left(\beta\sqrt\lambda\left(I_{t,x}^{\e,\e'}(B^1)+I_{t,x}^{\sigma,\sigma'}(B^2)\right)\right) \\
&\qquad\quad  \times\bigg( \int_{[0,t]^{4}}\int_{\mathbb{R}^{2}}p_{\e}(B_{s}^{1}+x-y)p_{\sigma}(B_{r}^{2}+ x-z)\\
&\qquad\qquad \qquad \times  \psi _{\e'}(t-s-u)\psi _{{\sigma'}}(t-r-v)p_{|u-v|}(y-z)\,dy\,dz\,du\,dv\,ds\,dr\bigg)
 \bigg]\,.
\end{align*}
Using similar analysis as in the proof of Proposition~\ref{T:appr}, we can show that
\begin{align*}
&\lim_{\e,\e',\sigma,\sigma'\downarrow0}\E\left\langle Du^{\e,\e'}(t,x),Du^{\sigma,\sigma'}(t,x)\right\rangle_{\H}^2\\
=& (\beta^2\lambda)\E\,\bE\bigg[\exp\bigg(\beta\sqrt\lambda\left(I_{t,x}(B^1)+I_{t,x}(B^2)\right)\bigg) \int_0^t\int_0^t p_{|s-r|}(B_s^1-B_r^2)dsdr\bigg]\\
=&(\beta^2\lambda)\bE\bigg[ \exp\bigg(\beta^2\lambda \sum_{i,j=1}^2\int_0^t\int_0^t p_{|s-r|}(B_s^i-B_r^j)dsdr\bigg)\int_0^t\int_0^t p_{|s-r|}(B_s^1-B_r^2)dsdr\bigg],
\end{align*}
which is finite by Lemma \ref{exp-int}. This proves \eqref{e:Du}, and hence \eqref{e:D12} holds.

We are now ready to prove the convergence to the right-hand side of \eqref{mildstra} of the right-hand side of (\ref{mildapp}). Noting that the Stratonovich integral on the right hand-side of \eqref{mildstra} is defined by  Definition \ref{defstra},  it suffices to show that, as $(\e,\e')\to 0$,
\begin{equation}\label{e:Mee'}
M^{\varepsilon, {\e'}}:=\int_0^t\int_{\R}p_{t-s}(x-y)(u^{\varepsilon, {\e'}}(s,y)-u(s,y))\Noise^{\varepsilon, {\e'}}(s,y)dyds\to 0 \text{ in }  L^2.
\end{equation}
 Denoting $v^{\varepsilon,{\e'}}(s,y):=u^{\varepsilon, {\e'}}(s,y)-u(s,y)$, recalling that $\Noise^{\varepsilon, {\e'}}(s,y)=\Noise(\phi_{s,y}^{\varepsilon,{\e'}})=\delta(\phi_{s,y}^{\varepsilon,{\e'}})$, where $\delta$ is the divergence operator (also known as Skorohod integral, see \cite[Section 1.3]{nualart06}), by \eqref{appr},  and applying the integration by parts  formula (\cite[Proposition 1.3.3]{nualart06})
 $$
 \delta(Fu)=F\delta(u)-\langle DF, u\rangle_\H
$$
to $v^{\e,\e'}(s,y)\Noise^{\e,\e'}(s,y)$,  we have
\begin{align*}
M^{\e, \e'}&=\int_0^t\int_\R p_{t-s}(x-y) \left[\delta(v^{\e,\e'}(s,y) \phi_{s,y}^{\e, \e'})+\langle Dv^{\e,\e'}(s,y),  \phi_{s,y}^{\e, \e'}\rangle_\H \right] dyds.
\end{align*}
We take the integral form $\int_0^T\int_\R u(s,y) \diamond \Noise(s,y)dyds$ to denote the Skorohod integral $\delta(u)$. Thus,
\begin{align}
M^{\e, \e'}=&\int_0^t\int_\R p_{t-s}(x-y) \left[\delta(v^{\e,\e'}(s,y) \phi_{s,y}^{\e, \e'})+\langle Dv^{\e,\e'}(s,y),  \phi_{s,y}^{\e, \e'}\rangle_\H \right] dyds \nonumber\\
=&\int_0^t\int_\R ~\int_0^t\int_\R p_{t-s}(x-y) v^{\e,\e'}(s,y) \phi_{s,y}^{\e, \e'} (r, z) dyds\diamond \Noise(r,z)dzdr \nonumber \\
&\qquad +\int_0^t\int_\R p_{t-s}(x-y)\langle Dv^{\e,\e'}(s,y),  \phi_{s,y}^{\e, \e'}\rangle_\H dyds \nonumber\\
=:&M_1^{\e,\e'}+M_2^{\e,\e'}. \label{M2}
\end{align}
For $M_1^{\e,\e'}$, by the estimate for the Skorohod integral $\E[|\delta(u)|^2]\le \E[\|u\|_\H^2]+\E[\|Du\|^2_{\H\otimes \H}]$ (\cite[Proposition 1.3.1]{nualart06}), we have
\begin{align}\label{e:M1}
\E[|M_1^{\e,\e'}|^2]\le \E [\|\Phi_{t,x}^{\e,\e'}\|_\H^2]+\E[\|D\Phi_{t,x}^{\e,\e'}\|^2_{\H\otimes\H}],
\end{align}
where
\[\Phi^{\e,\e'}_{t,x}(r,z)=\int_0^t\int_\R p_{t-s}(x-y) v^{\e,\e'}(s,y) \phi_{s,y}^{\e, \e'} (r, z) dyds.\]
For the first term on the right hand side of \eqref{e:M1},
\begin{equation}\label{M1bd}
\begin{aligned}
\E [\|\Phi_{t,x}^{\e,\e'}\|_\H^2]=& \int_0^t\int_{\R}\int_0^t\int_\R p_{t-s_1}(x-y_1)p_{t-s_2}(x-y_2)\\
&\quad \times  \E\left[v^{\e,\e'}(s_1, y_1)v^{\e,\e'}(s_2, y_2)\right]\left\langle \phi^{\e,\e'}_{s_1,y_1}, \phi^{\e,\e'}_{s_2,y_2}\right\rangle_\H ~dy_1ds_1dy_2ds_2
\end{aligned}
\end{equation}
Noting that  $\lim_{\varepsilon,{\e'}\downarrow0}\E[|v_{s,y}^{\varepsilon,{\e'}}|^2]=0$, $\E[|v_{s,y}^{\varepsilon,{\e'}}|^2]$ is  bounded by  $Ce^{Cs^{3/2}}$ for some constant $C$ independent of $(\e,\e')$, and that $\left\langle \phi^{\e,\e'}_{s_1,y_1}, \phi^{\e,\e'}_{s_2,y_2}\right\rangle_\H$ is uniformly bounded by $C|s_1-s_2|^{-\frac12}$ in $(\e,\e')$ by Lemma \ref{lem:phi} below,  we thus have $\lim_{\varepsilon,{\e'}\downarrow0}\E [\|\Phi_{t,x}^{\e,\e'}\|_\H^2]=0$ by the dominated convergence theorem.

Similarly, for the second term on the right hand side of \eqref{e:M1}, by \eqref{e:Du},
\begin{align*}
\E[\|D\Phi_{t,x}^{\e,\e'}\|^2_{\H\otimes\H}]=&\int_0^t\int_{\R}\int_0^t\int_\R p_{t-s_1}(x-y_1)p_{t-s_2}(x-y_2) \\
&\quad \times \E\left[\left\langle Dv^{\e,\e'}(s_1, y_1), Dv^{\e,\e'}(s_2, y_2)\right\rangle_\H\right]\left\langle \phi^{\e,\e'}_{s_1,y_1}, \phi^{\e,\e'}_{s_2,y_2}\right\rangle_\H ~dy_1ds_1dy_2ds_2
\end{align*}
also converges to $0$ as $(\e,\e')\to0$. Hence, we have
\begin{equation}\label{e:M10}
\lim_{\e,\e'\downarrow0} \E[|M_1^{\e,\e'}|^2]=0.
\end{equation}

Finally, we bound $M_2^{\e,\e'}$ in \eqref{M2}.
\begin{align}\label{e:M2}
M_2^{\e,\e'}=& \int_0^t\int_\R p_{t-s}(x-y)\langle Du^{\e,\e'}(s,y),  \phi_{s,y}^{\e, \e'}\rangle_\H dyds\notag\\
&\quad -\int_0^t\int_\R p_{t-s}(x-y)\langle Du(s,y),  \phi_{s,y}^{\e, \e'}\rangle_\H dyds.
\end{align}
We shall show that the two terms on the right-hand side of \eqref{e:M2} converges in $L^2$ to the same limit, as $(\e,\e')\to0$. For the first term, by \eqref{e:Due},
\begin{align}
&\int_0^t\int_\R p_{t-s}(x-y)\langle Du^{\e,\e'}(s,y),  \phi_{s,y}^{\e, \e'}\rangle_\H dyds\notag\\
=&\beta\sqrt\lambda\int_0^t\int_\R p_{t-s}(x-y)\bE \left[\exp\left(\beta\sqrt \lambda I_{s,y}^{\e,\e'}\right)\left\langle \delta^{\e,\e'}(B_{s-\cdot}^{y}-\cdot),  \phi_{s,y}^{\e, \e'}\right\rangle_\H\right] dyds.\label{e:M21}
\end{align}
For the inner product
\begin{align*}
&\left\langle \delta^{\e,\e'}(B_{s-\cdot}^{y}-\cdot),  \phi_{s,y}^{\e, \e'}\right\rangle_\H\\
=&\int_{[0,s]^3}\int_{\R^2} p_{|r_1-r_2|}(z_1-z_2) \psi_{\e'}(s-\tau-r_1)p_\e(B_\tau^y-z_1)\\
&\qquad \qquad \times\psi_{\e'}(s-r_2)p_\e(y-z_2) dz_1dz_2 d\tau dr_1dr_2,
\end{align*}
using the fact $p_{|r_1-r_2|}\le (2\pi|r_1-r_2|)^{-\frac12}$, integrating in space variables
$(z_1, z_2)$, and then applying \cite[Lemma A.3]{hunuso11},   we have that it is bounded by $C\int_0^s\tau^{-\frac12}d\tau<\infty$,  and hence as $(\e,\e')\to0$, it converges to $\int_0^s p_{\tau}(B_\tau)d\tau$ in $L^p$ for all $p\ge1$ by dominated convergence theorem.  Thus, as $(\e,\e')\to0$, noting that $u^{\e,\e'}(s,y)$ converges to $u(s,y)$ in $L^p$ for all $p\ge1$, we have that \eqref{e:M21} converges in $L^2$ to
\[\beta\sqrt\lambda\int_0^t\int_\R p_{t-s}(x-y)\bE \bigg[\exp\left(\beta\sqrt \lambda I_{s,y}\right)\int_0^s p_\tau(B_\tau)d\tau \bigg] dy ds.\]
In a similar way, we can show that the second term on the right-hand side \eqref{e:M2} converges in $L^2$ to the same limit as $(\e,\e')\to0.$

  Thus we have shown the convergence to 0 of \eqref{e:M2}. This together with \eqref{e:M10} yields  \eqref{e:Mee'}. The proof of Proposition \ref{P:uexp} is concluded.

\end{proof}

In the bound for \eqref{M1bd}, we used the following result.
 \begin{lemma}\label{lem:phi}
 \[\sup_{\e,\e'>0}\langle \phi_{s_1,y_1}^{\e,{\e'}}, \phi_{s_2,y_2}^{\sigma,\sigma'}  \rangle_\H \le C|s_1-s_2|^{-1/2},\]
 where $C$ is a constant independent of $(\e,\e',\sigma, \sigma', s_1,y_1, s_2,y_2)$.
 \end{lemma}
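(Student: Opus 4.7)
The plan is to reduce the four-fold spatial/temporal integration in the inner product to a one-dimensional temporal integral by exploiting the semigroup property of the heat kernel, and then apply the same averaging estimate (Lemma A.3 of \cite{hunuso11}) that has been the workhorse of the preceding calculations.

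First, unfolding the definitions in \eqref{e:psip}--\eqref{e:phi} and the inner product on $\H$, I would write
\begin{align*}
\langle \phi_{s_1,y_1}^{\e,\e'},\phi_{s_2,y_2}^{\sigma,\sigma'}\rangle_{\H}
&= \int_{[0,\infty)^2}\!\!\int_{\R^2}
\psi_{\e'}(s_1-u)\,\psi_{\sigma'}(s_2-v)\,
p_{\e}(y_1-z_1)\,p_{\sigma}(y_2-z_2)\\
&\qquad\qquad\times p_{|u-v|}(z_1-z_2)\,dz_1\,dz_2\,du\,dv.
\end{align*}
The next step is to carry out the $dz_1\,dz_2$ integration using the semigroup identity for Gaussian densities, which gives
\[
\int_{\R^2} p_{\e}(y_1-z_1)\,p_{\sigma}(y_2-z_2)\,p_{|u-v|}(z_1-z_2)\,dz_1\,dz_2
= p_{\e+\sigma+|u-v|}(y_1-y_2).
\]

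Using the trivial bound $p_{\e+\sigma+|u-v|}(y_1-y_2)\le (2\pi(\e+\sigma+|u-v|))^{-1/2}\le (2\pi|u-v|)^{-1/2}$, which is independent of $y_1,y_2,\e,\sigma$, the inner product is then dominated by
\[
\frac{1}{\sqrt{2\pi}}\int_0^{s_1}\!\!\int_0^{s_2} \psi_{\e'}(s_1-u)\,\psi_{\sigma'}(s_2-v)\,|u-v|^{-1/2}\,du\,dv,
\]
which is exactly the quantity controlled by \cite[Lemma A.3]{hunuso11}; that lemma yields the uniform bound $C|s_1-s_2|^{-1/2}$.

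The only point requiring any care is making sure the application of \cite[Lemma A.3]{hunuso11} really is uniform in $(\e',\sigma')$ when the intervals $[s_1-\e',s_1]$ and $[s_2-\sigma',s_2]$ overlap or when $|s_1-s_2|$ is comparable to $\e'\vee\sigma'$; but this is the same regime already handled in the estimate \eqref{e5}, so the result applies verbatim. Putting these steps together gives the claimed uniform bound.
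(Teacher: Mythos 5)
Your proof is correct and follows essentially the same path as the paper's. The only cosmetic difference is that you first integrate out the spatial variables via the Gaussian semigroup to obtain $p_{\e+\sigma+|u-v|}(y_1-y_2)$ and then bound it by $(2\pi|u-v|)^{-1/2}$, whereas the paper bounds $p_{|u-v|}(z_1-z_2)\le(2\pi|u-v|)^{-1/2}$ first and then integrates out the two Gaussian densities; either ordering produces the same remaining double integral, to which \cite[Lemma A.3]{hunuso11} applies.
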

 \begin{proof} By the definition of $\phi^{\e, \e'}_{t,x}$ in \eqref{e:phi},
 \begin{align*}
\langle \phi_{s_1,y_1}^{\e,{\e'}}, \phi_{s_2,y_2}^{\sigma,\sigma'}  \rangle_\H &=\left\langle \psi_{\e'}(s_1-\cdot)p_\e(y_1-\cdot), \psi_{\delta'}(s_2-\cdot)p_\delta(y_2-\cdot)\right\rangle_\H\\
& =\int_0^{s_1} dr_1 \int_0^{s_2} dr_2 ~~\psi_{\e'}(s_1-r_1)\psi_{\delta'}(s_2-r_2)\\
 &\qquad \int_{\R^2}p_\e(y_1-z_1) p_\delta(y_2-z_2) p_{|r_1-r_2|}(z_1-z_2) dz_1dz_2\\
& \le  C|s_1-s_2|^{-1/2}.
 \end{align*}
where the last step follows from \cite[Lemma A.3]{hunuso11} and the fact $\sup_{z\in\R}p_{|r_1-r_2|}(z)\le (2\pi |r_1-r_2|)^{-1/2}$.
 \end{proof}

 \begin{remark}
Strictly speaking Proposition, \ref{P:uexp} provides a series solution, but does not address the uniqueness of the mild Stratonovich solution. Nevertheless, the convergence established in Theorem~\ref{T:main} identifies the unique limit of the partition functions as the ``natural'' solution of the SPDE. One may prove uniqueness of the mild Stratonovich solution following the approach used in \cite[Setion 5]{hhnt15}. We omit the details here since we are mainly interested in the scaling limit of the polymer partition functions in this article.
\end{remark}

\section{Correlations for the Poisson random walks} \label{S:Poisson}

To prove Proposition \ref{P:xilim}, we need to bound moments in order to establish tightness for the rescaled and recentered random field $\xi$ in a weighted Besov-H\"older space. It turns out that the model of Poisson random walks admits exact correlation calculations, based on which one can perform careful moment analysis for the occupation variables $\xi$. In this section we provide some formulas and bounds for these correlations, which will be useful in Section~\ref{S:fieldconv}.

Let $(\xi(t,x))_{t>0, x\in\Z^d}$ be the mean $\lambda$ Poisson field of random walks as defined  in \eqref{e:xi}, and let $\tilde \xi(\cdot, \cdot):= \xi(\cdot, \cdot)-\lambda$ be its centered version. In this section, we give bounds on the $k$-point correlation functions
of $\xi$.

Note that $\xi(\cdot, \cdot)$ can be defined from a {\em Poisson point process} $\Gamma$ on the space of simple random walk trajectories on $\Z^d$ with intensity measure
\begin{equ} [e:mu]
\mu(\cdot)= \lambda \sum_{x\in \Z^d}  \bP_{(0,x)}(S\in \cdot),
\end{equ}
where $\bP_{(0,x)}$ is the law of a simple symmetric random walk starting from $x$ at time $0$. Given the random set of trajectories $\Gamma$, $\xi(t,x)$ simply counts the number of paths in $\Gamma$ that visit the space-time point $(t,x)$.
Given $A$, a set of random walk trajectories on $\Z^d$, let
\[
\xi_A \eqdef
\sum_{y\in\Z^d} \sum_{i=1}^{\xi(0,y)} 1_{ (Y_t^{y,i})_{t\ge0}\in A}
\]
denote the number of paths in the Poisson point process $\xi$ that belong to A, and denote $\tilde \xi_A: = \xi_A-\E[\xi_A]$.
Clearly $ \xi_A$ and $\tilde \xi_A$ have the additive property:
if $A\cap B=\emptyset$ then $\xi_{A\cup B}=\xi_A + \xi_B $ and the same holds for $\tilde \xi_A$.

Let $T>0$ and $j\in \N$ be  fixed. Given a collection of space-time points $(t_i, x_i)_{1\le i\le j}$ with $0\le t_1\le t_2\le \cdots\le t_j\le T$ and $x_1,\cdots, x_j\in \Z^d$, for each $1\leq i\leq j$, denote $\tilde\xi_i \eqdef \tilde \xi(t_i, x_i)$.  Denote by $A_i$ the set of trajectories that pass through the point $(t_i, x_i)$, then $\tilde \xi_i=\tilde \xi_{A_i}$.
For two sets $A$ and $B$, we denote $A\cap B$ by $AB$, and denote $A\cup B$ by $A+B$ if $A$ and $B$ are disjoint. As an illustration of the main idea useful for the general case,
 we compute the two-point correlation $\E[\tilde \xi_1\tilde \xi_2]$:
\begin{equs}[e:2point-ex]
\E & [\tilde \xi_1\tilde \xi_2]=\E[\tilde\xi_{A_1}\tilde\xi_{A_2}]
=\E[\tilde\xi_{A_1A_2+ A_1\backslash A_2}\,\tilde\xi_{A_1A_2+A_2\backslash A_1}]\\
&=\E\left[(\tilde\xi_{A_1A_2}+\tilde\xi_ {A_1\backslash A_2})(\tilde\xi_{A_1A_2}+\tilde\xi_{A_2\backslash A_1})\right]\\
&=\E[(\tilde \xi_{A_1A_2})^2] \;.
\end{equs}
Here, in the last step, we used the fact that $\tilde \xi_A$ and $\tilde\xi_B$ are independent if $A\cap B=\emptyset$,
which is why we cut the sets $A_1,A_2$ into non-intersecting subsets.

To compute the RHS of \eqref{e:2point-ex}, note that $\xi_{A_1A_2}$ is a Poisson random variable with mean
\begin{equ} [e:wk]
\lambda \sum_{x\in \Z^d} P_{t_1}(x_1-x) P_{t_2-t_1}(x_2-x_1) = \lambda P_{t_2-t_1}(x_2-x_1),
\end{equ}
where $P_t(x) := \bP_{(0,0)}(S_t=x)$ is the random walk transition kernel. Therefore
\begin{equ} [e:2pt]
\E [\tilde \xi_1\tilde \xi_2]= \E[(\tilde \xi_{A_1A_2})^2] = \lambda P_{t_2-t_1}(x_2-x_1).
\end{equ}

More generally, by the same calculation as in \eqref{e:wk}, we have:
\begin{lemma}\label{lem:xiAAAPoi}
With $0< t_1 < t_2 <\dots< t_j< T$, $x_i\in \mathbb Z^d$ and the sets $A_i$ defined above, the random variable
$\xi_{A_1A_2\dots A_j}$ has Poisson distribution with parameter
\[
\lambda \prod_{k=1}^{j-1}
P_{t_{k+1}-t_k}(x_{k+1}-x_k) .
\]
\end{lemma}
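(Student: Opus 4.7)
The strategy is to use the Poisson point process structure of the family of walks themselves. Recall that $\xi$ is the counting measure of a Poisson point process on the space of simple random walk trajectories with intensity $\mu(\cdot)=\lambda\sum_{x\in\Z^d}\bP_{(0,x)}(S\in\cdot)$, as noted in \eqref{e:mu}. For any measurable set $B$ of trajectories, the coloring/mapping theorem gives immediately that $\xi_B$ is Poisson distributed with parameter $\mu(B)$, and the variables $\xi_B,\xi_{B'}$ are independent whenever $B\cap B'=\emptyset$. So first I would apply this general fact with $B=A_1A_2\cdots A_j=\bigcap_{i=1}^{j}A_i$, the set of trajectories that visit $x_i$ at time $t_i$ for every $i=1,\dots,j$.

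The remaining task is to compute $\mu(A_1\cdots A_j)$. By the definition of $\mu$ and the Markov property of the simple symmetric random walk, for any starting point $x\in\Z^d$,
\begin{equation*}
\bP_{(0,x)}(S_{t_i}=x_i\ \forall\, 1\le i\le j)
= P_{t_1}(x_1-x)\prod_{k=1}^{j-1} P_{t_{k+1}-t_k}(x_{k+1}-x_k).
\end{equation*}
Summing over $x\in\Z^d$ and using $\sum_{x\in\Z^d}P_{t_1}(x_1-x)=1$, the initial sum telescopes away and one obtains
\begin{equation*}
\mu(A_1\cdots A_j)=\lambda\prod_{k=1}^{j-1} P_{t_{k+1}-t_k}(x_{k+1}-x_k),
\end{equation*}
which is precisely the claimed Poisson parameter.

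There is no real obstacle here; the only point worth spelling out is the identification of $\xi$ with a Poisson point process on trajectory space so that one can pass from the event ``particle passes through $(t_1,x_1),\dots,(t_j,x_j)$'' to an intersection of trajectory sets, rather than trying to directly manipulate the random variables $\xi(t_i,x_i)$ (which would entangle with particles visiting $x_i$ at time $t_i$ via distinct trajectories and obscure the Poisson structure). Once the trajectory-space viewpoint is adopted, the lemma is an immediate computation, exactly paralleling \eqref{e:wk}–\eqref{e:2pt} in the $j=2$ case.
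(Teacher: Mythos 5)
Your proof is correct and matches the paper's argument: both identify $\xi_{A_1\cdots A_j}$ as a Poisson variable via the Poisson point process structure on trajectory space and compute the parameter $\mu(A_1\cdots A_j)=\lambda\sum_x P_{t_1}(x_1-x)\prod_k P_{t_{k+1}-t_k}(x_{k+1}-x_k)$, using $\sum_x P_{t_1}(x_1-x)=1$ to simplify. The only difference is that you spell out the coloring/restriction theorem justification, which the paper leaves as "clearly."
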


\begin{proof}
$\xi_{A_1A_2\dots A_j}$ clearly has Poisson distribution, with mean
\begin{equ}
\lambda \sum_{x\in \Z^d} P_{t_1}(x_1-x) P_{t_2-t_1}(x_2-x_1) \cdots P_{t_j-t_{j-1}}(x_j-x_{j-1}).
\end{equ}
 The claimed result follows from $\sum_{x\in \Z^d} P_{t_1}(x_1-x)=1$.
\end{proof}

With Lemma~\ref{lem:xiAAAPoi}, we can compute $\E\big[\big(\tilde\xi_{A_1A_2\dots A_j}\big)^k\big]$ by the formulas for the moments of centered Poisson random variables. For instance,
$$
\E[\tilde \xi_1\tilde \xi_2\tilde\xi_3]=\E[(\tilde\xi_{A_1A_2A_3})^3]=\lambda P_{t_2-t_1}(x_2-x_1)P_{t_3-t_2}(x_3-x_2).
$$

The next lemma provides a useful expression for correlations  of the centered Poisson variables $\tilde \xi_{A_i}$ for a given collection  $(A_i)_{1\le i \le m}$ of sets of trajectories,
thus generalizing \eqref{e:2point-ex}.
%
In the next lemma, with a slight abuse of notation, we will assume each $A_i$ to be an (arbitrary) set of random walk trajectories, instead of  the set of trajectories that pass through a given point as above.

\begin{lemma}\label{lem:corr-exact}
Let $\xi$ be the Poisson field of independent random walks as before, and let $(A_i)_{1\leq i\leq m}$ be sets of trajectories
with associated centered Poisson random variables $(\tilde \xi_{A_i})_{1\leq i\leq m}$. For $J\subset {[m]}:=\{1, \ldots, m\}$, let
\begin{equ}[e:defBJ]
B_J\eqdef \big(\cap_{i\in J} A_i\big) \cap \big( \cap_{i\in J^c} A_i^c\big) \;.
\end{equ}
Then
\begin{equ}\label{e:corr-exact}
\E\Big[ \prod_{i=1}^m \tilde \xi_{A_i} \Big] =
\sum_{I_1, \ldots, I_k \, partition\, {[m]} \atop |I_i|\geq 2 \,\forall\, 1\leq i\leq k}
\sum_{J_i\supset I_i \, \forall\, 1\leq i\leq k \atop J_i\neq J_{i'} \,\forall\, i\neq i'}
\prod_{i=1}^k \,
\E[\tilde \xi_{B_{J_i}}^{|I_i|}] \;.
\end{equ}
Here, $\tilde \xi_{B_{J_i}}$
is a centered Poisson variable with mean $\lambda \sum_{x\in \Z^d} \bP_{(0,x)}(S\in B_{J_i})$.
\end{lemma}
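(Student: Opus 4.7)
The plan is to use the Poisson point process structure of $\xi$ on trajectory space (with intensity $\mu$ from \eqref{e:mu}) via the atomic decomposition induced by the sets $(B_J)_{J\subseteq[m]}$. For each $i\in[m]$ one has the disjoint decomposition $A_i = \bigsqcup_{J\subseteq[m],\,J\ni i} B_J$, so by the additive property of $A\mapsto\xi_A$,
\[
\tilde\xi_{A_i} \;=\; \sum_{J\ni i} \tilde\xi_{B_J}.
\]
Since the $(B_J)_{J\subseteq[m]}$ are pairwise disjoint subsets of trajectory space and $\xi$ is Poissonian with intensity $\mu$, the variables $\{\xi_{B_J}\}_{J\subseteq[m]}$ are mutually independent, each Poisson with parameter $\mu(B_J)=\lambda\sum_{x\in\Z^d}\bP_{(0,x)}(S\in B_J)$; the same independence then holds for the centered family $\{\tilde\xi_{B_J}\}_{J\subseteq[m]}$.

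Next I would expand the product and take expectations:
\[
\E\Big[\prod_{i=1}^m \tilde\xi_{A_i}\Big] \;=\; \sum_{\substack{(J_1,\ldots,J_m)\\ J_i\ni i\;\forall i}} \E\Big[\prod_{i=1}^m \tilde\xi_{B_{J_i}}\Big].
\]
For a fixed tuple, set $\mathcal J:=\{J_1,\ldots,J_m\}$ and $I_J:=\{i\in[m]:J_i=J\}$ for $J\in\mathcal J$, so that $(I_J)_{J\in\mathcal J}$ partitions $[m]$ with $I_J\subseteq J$. By independence, $\E[\prod_i \tilde\xi_{B_{J_i}}]=\prod_{J\in\mathcal J}\E[\tilde\xi_{B_J}^{|I_J|}]$. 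Because $\tilde\xi_{B_J}$ is centered, any factor with $|I_J|=1$ forces the whole product to vanish, so only tuples with $|I_J|\geq2$ for every $J\in\mathcal J$ contribute.

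Finally I would re-index the surviving sum: the data $(J_1,\ldots,J_m)$ with $J_i\ni i$ correspond bijectively to labeled partitions $\{(I_\ell,J_\ell)\}_{\ell=1}^k$, where $\{I_1,\ldots,I_k\}$ is a partition of $[m]$ and $J_1,\ldots,J_k$ are pairwise distinct subsets of $[m]$ satisfying $I_\ell\subseteq J_\ell$ (the tuple is recovered by setting $J_i:=J_\ell$ whenever $i\in I_\ell$). Imposing the surviving condition $|I_\ell|\geq2$ then reproduces exactly the right-hand side of \eqref{e:corr-exact}.

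There is no real obstacle; the argument is essentially bookkeeping resting on two ingredients — independence of $\{\tilde\xi_{B_J}\}$ from the Poisson point process property, and the vanishing of any factor associated to a singleton block. The only step requiring care is the final re-indexing, which must be arranged so that the summation matches the indexing on the right-hand side of \eqref{e:corr-exact}.
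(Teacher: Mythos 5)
Your proposal is correct and follows essentially the same route as the paper's proof: decompose each $A_i$ into the disjoint atoms $B_J$, expand the product using independence of $\{\tilde\xi_{B_J}\}_J$, kill singleton blocks by centering, and re-index the surviving tuples as partitions with attached distinct supersets.
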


\begin{proof}
As in \eqref{e:2point-ex}, we can partition each $A_i$ into the $B_J$'s to write
\begin{equ}
\E  \Big[ \prod_{i=1}^m \tilde \xi_{A_i} \Big]
=\E \Big[ \prod_{i=1}^m
\sum_{J_i\subset [m]}  1_{\{i\in J_i\}} \tilde \xi_{B_{J_i}} \Big]
=  \sum_{J_1, \ldots, J_m\subset {[m]} \atop i\in J_i \, \forall\, 1\leq i\leq m}
\E\Big[\prod_{i=1}^m \tilde \xi_{B_{J_i}}\Big]  \;.
\end{equ}
Note that when $J_i\neq J_{i'}$,
one has $B_{J_i}\cap B_{J_{i'}}=\emptyset$
and thus $\tilde\xi_{B_{J_i}}$ and $\tilde\xi_{B_{J_{i'}}}$
are independent. Thus $(\tilde \xi_{B_J})_{J\subset {[m]}}$ is a family of independent centered Poisson random variables.
The collection $(J_i)_{i=1}^m$
determines a partition $\{I_1, \ldots, I_k \}$ of $[m]$
(for some $k\le m$)
where $i$ and $i'$ are in the same block
if and only if $J_i=J_{i'}$. Note that each block $I\in \{I_1, \ldots, I_k \}$ is associated with some $J\in \{J_1, \ldots, J_m\}$ with $J\supset I$.

Re-summing first over the partitions of $[m]$ and then over the $J$'s,
using the aforementioned independence of $\tilde\xi$ for disjoint sets,
and recalling that $\tilde\xi$ are centered, one obtains \eqref{e:corr-exact}.
\end{proof}

The next lemma provides bounds on the moments $\E[\tilde \xi_{B_{J_i}}^{|I_i|}] $  appearing on the right hand side of \eqref{e:corr-exact}.
We first introduce some notation. For a generic collection of space-time points
$((t_i,x_i))_{i\in J}$ indexed by some set $J$ with $|J|=j$, we can order the time variables
by $t_{i_1} < \cdots < t_{i_{j}}$ for some rearrangement of the set $J=\{i_1,\cdots,i_{j}\}$
and then we write
\begin{equ}[e:defCP]
\CP(J)
 := P_{t_{i_2}-t_{i_1}}(x_{i_2}-x_{i_1})
 	\cdots P_{t_{i_{j}}-t_{i_{j-1}}}(x_{i_{j}}-x_{i_{j-1}}) \;.
\end{equ}

\begin{lemma}\label{lem:mom-xiBJ}
In the same setting as above, for every $n\in \N$ and $J\subset [m]$
one has
\begin{equ}[e:boundExi-n]
\big| \E[\tilde \xi_{B_{J}}^{n}]\big| \leq C(\lambda, n) \CP (J)
\end{equ}
for some $C(\lambda, n)$ depending only on $\lambda$ and $n$.
\end{lemma}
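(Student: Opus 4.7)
The plan is to reduce the bound to a direct estimate on the mean $\mu_J := \E[\xi_{B_J}]$, using that $\xi_{B_J}$ is a Poisson random variable, and then to bound $\mu_J$ using the Markov property of simple random walk.

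First, I would use the classical fact that the $n$-th centered moment of a Poisson$(\mu)$ variable is a polynomial in $\mu$ of degree $\lfloor n/2 \rfloor$ with no constant term; this can be read off from the cumulant generating function $\log \E[e^{s(X-\mu)}] = \mu(e^s - 1 - s)$ (equivalently via Touchard polynomials/Stirling numbers of the second kind). Thus there exists a constant $c_n$, depending only on $n$, such that
\begin{equation*}
\big|\E[\tilde \xi_{B_J}^{n}]\big| \leq c_n \sum_{k=1}^{\lfloor n/2 \rfloor} \mu_J^{k}.
\end{equation*}

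Next I would bound $\mu_J$. The key observation is that the defining condition of $B_J$ in \eqref{e:defBJ} is strictly more restrictive than requiring the trajectory to pass through $(t_i,x_i)$ for every $i\in J$ (we just drop the $J^c$-conditions). Ordering $J=\{i_1,\ldots,i_j\}$ with $t_{i_1}<\cdots<t_{i_j}$ and applying the Markov property,
\begin{equation*}
\mu_J = \lambda \sum_{x\in\Z^d} \bP_{(0,x)}(S\in B_J) \leq \lambda \sum_{x\in\Z^d} P_{t_{i_1}}(x_{i_1}-x) \prod_{k=2}^{j} P_{t_{i_k}-t_{i_{k-1}}}(x_{i_k}-x_{i_{k-1}}) = \lambda\, \CP(J),
\end{equation*}
where the last equality uses $\sum_{x} P_{t_{i_1}}(x_{i_1}-x) = 1$ to absorb the sum over starting points.

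Finally, since each discrete-time transition probability satisfies $P_t(y)\le 1$, we have $\CP(J)\le 1$ and hence $\mu_J\le \lambda$. Combining this with $\mu_J\le \lambda\,\CP(J)$ gives $\mu_J^{k}\le \lambda^{k-1}\mu_J\le \lambda^{k}\,\CP(J)$ for every $k\geq 1$, which when substituted into the Poisson moment bound yields \eqref{e:boundExi-n} with $C(\lambda,n) = c_n \sum_{k=1}^{\lfloor n/2\rfloor}\lambda^k$. The argument is essentially mechanical; the only point requiring care is the Markov-property estimate that absorbs the free starting point to produce exactly $\CP(J)$, and the use of $P_t\leq 1$ to trade higher powers of $\mu_J$ for a single $\CP(J)$ factor.
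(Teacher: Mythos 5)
Your argument is correct and is essentially the same as the paper's: both bound $\mu_J=\E[\xi_{B_J}]\le\lambda\,\CP(J)$ by dropping the $J^c$-constraints and summing out the starting point via $\sum_x P_{t_{i_1}}(x_{i_1}-x)=1$, then invoke the fact that centered Poisson moments are polynomials in $\mu$ with no constant term (the paper cites centered Touchard polynomials; you derive the same fact from the cumulant generating function), and finally use $\CP(J)\le 1$ to trade powers $\mu_J^k\le\lambda^k\CP(J)$. The only cosmetic difference is whether one writes $\mu_J^k\le\lambda^{k-1}\mu_J$ or $\mu_J^k\le\lambda^k\CP(J)^k$ before applying $\CP(J)\le1$; these are the same estimate.
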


\begin{proof}
Recall from  the definition \eqref{e:defBJ}
that $ \xi_{B_J}$ counts the number of trajectories in the Poisson point process $\xi$
that pass $(t_r,x_r)$ for each $r\in J$ but do not pass $(t_r,x_r)$ for each $r\notin J$,
which is a Poisson random variable with mean
$$
\E[\xi_{B_J}] = \lambda \sum_{x\in \Z^d} \bP_{(0,x)}\big(S_{t_r}=x_r \, \forall\, r\in J, \ S_{t_r}\neq x_r \, \forall\, r\in [m]\backslash J\big)
\leq \lambda \CP(J),
$$
where the bound is obtained by removing the constraint that $S_{t_r}\neq x_r \, \forall\, r\in [m]\backslash J$.

The $n$-th moment $\E[\tilde \xi_{B_J}^n]$ of the centered Poisson random variable $\tilde  \xi_{B_J}$ is given by the centered Touchard
polynomial $\tilde T_n(x)$ (see for instance \cite[Prop.~3.3.4]{PeccatiTaqqu}) of degree at most $n$ with $x=\E[\xi_{B_J}]$. In particular,
for all $n\geq 2$,
\begin{equ} [e:Touchard]
\tilde T_n (x) =x+ x^2 q_n(x) \ \mbox{for some polynomial } q_n, \mbox{ with } q_2(x)=q_3(x)=0.
\end{equ}
Since $\E[\xi_{B_J}]^k \leq \lambda^k \CP(J)^k \leq \lambda^k \CP(J)$ for all $k\in\N$, \eqref{e:boundExi-n} follows easily.
\end{proof}

Although the next lemma is not needed in our proof, it provides a more refined estimate on the correlation functions of a general Poisson field, which is of independent interest. In particular, we see in \eqref{ximom} below that the correlation function can be split into two parts: the correlation function of a Gaussian field with the same covariance as $\tilde \xi$, plus the remainder for which we give a bound.
\begin{lemma}\label{lemma-correlation}
Let $\xi$ be the Poisson point process on the space of random walk trajectories with intensity measure $\mu$ given by \eqref{e:mu}.
Let $(A_i)_{1\leq i\leq m}$ be sets of trajectories, with $A_J:=\cap_{i\in J} A_i$ for $J\subset {[m]}=\{1, \ldots, m\}$. Then
\begin{equation}\label{ximom}
\E\Big[ \prod_{i=1}^m \tilde \xi_{A_i} \Big] = \sum_{I_1, \ldots, I_k \, partition\, {[m]} \atop |I_i|= 2 \,\forall\, 1\leq i\leq k} \prod_{i=1}^k \mu(A_{I_i}) + R(A_1, \ldots, A_m),
\end{equation}
where the first term equals $0$ when $m$ is odd. If $\alpha:=\max\limits_{I\subset {[m]}, |I|=2} \mu(A_I)\leq 1$, then
\begin{equation}\label{ximomR}
|R(A_1, \ldots, A_m)| \leq  \sum_{I_1, \ldots, I_k \, partition\, {[m]} \atop \min_i |I_i|\geq 2; \ \max_i|I_i|>2} 3(1+C_m\alpha)^k\prod_{i=1}^k \mu(A_{I_i}),
\end{equation}
where $C_m$ depends only on $m$.
\end{lemma}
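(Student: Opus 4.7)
The plan is to start from Lemma~\ref{lem:corr-exact} and split its RHS into the contribution from pair partitions (all $|I_i|=2$) and non-pair partitions (some $|I_i|\geq 3$). For a pair partition, $\E[\tilde\xi_{B_{J_i}}^{|I_i|}]=\E[\tilde\xi_{B_{J_i}}^{2}]=\mu(B_{J_i})$, so the inner sum is $\sum_{J_i\supset I_i, \text{ distinct}}\prod_i \mu(B_{J_i})$. Using the disjoint decomposition $A_{I}=\sum_{J\supset I} B_{J}$ and hence $\mu(A_I)=\sum_{J\supset I}\mu(B_J)$, I would rewrite this as $\prod_i\mu(A_{I_i}) - D_{I_1,\ldots,I_k}$, where $D_{I_1,\ldots,I_k}\geq 0$ is the sum of $\prod_i\mu(B_{J_i})$ over tuples $(J_1,\ldots,J_k)$ with at least one collision $J_i=J_{i'}$. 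Summing over pair partitions of $[m]$ produces the Gaussian-like leading term on the RHS of \eqref{ximom}, which vanishes when $m$ is odd.

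For the collision correction $D_{I_1,\ldots,I_k}$, I would apply a union bound over index pairs $(i,i')$ with $J_i=J_{i'}=:J$. Since any such $J$ must contain $I_i\cup I_{i'}$, and $\mu(B_J)\leq \mu(A_J)\leq \mu(A_{I})\leq\alpha$ for any $2$-subset $I\subset J$, the key estimate is
\[
\sum_{J\supset I_i\cup I_{i'}}\mu(B_J)^2\;\leq\;\alpha\sum_{J\supset I_i\cup I_{i'}}\mu(B_J)\;=\;\alpha\,\mu(A_{I_i\cup I_{i'}}).
\]
Dropping the distinctness constraint on the remaining $J_j$'s bounds the corresponding contribution by $\alpha\,\mu(A_{I_i\cup I_{i'}})\prod_{j\neq i,i'}\mu(A_{I_j})$. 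This precisely matches the $\prod\mu(A_{I'_l})$ associated to the coarser partition of $[m]$ obtained by merging $I_i$ with $I_{i'}$—a partition with one block of size $4$ and the rest of size $2$, which belongs to the index set of \eqref{ximomR}.

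For non-pair partitions, I would exploit the Touchard structure \eqref{e:Touchard}: for $n\geq 2$, $\tilde T_n(x)=x+x^2 q_n(x)$, and since $\mu(B_{J_i})\leq\alpha\leq 1$, one has $|\tilde T_n(\mu(B_{J_i}))|\leq(1+C_m\alpha)\,\mu(B_{J_i})$ for a constant depending only on $m$. Multiplying over blocks and dropping the distinctness constraint (all factors being nonnegative) gives
\[
\Big|\sum_{J_i\supset I_i,\text{ distinct}}\prod_i\E[\tilde\xi_{B_{J_i}}^{|I_i|}]\Big|\;\leq\;(1+C_m\alpha)^k\prod_{i=1}^k\mu(A_{I_i}),
\]
which is exactly the shape of the summand in \eqref{ximomR}. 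The same bound applies \emph{a fortiori} to the pair-collision correction (where the factor $\alpha$ gives an even better bound).

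Collecting all pieces yields \eqref{ximom} with the remainder $R$ bounded by a combinatorial sum over partitions with $\min_i|I_i|\geq 2$ and $\max_i|I_i|>2$, weighted by $\prod_i\mu(A_{I_i})$ and a prefactor of the form $3(1+C_m\alpha)^k$. The main obstacle is combinatorial rather than analytic: one must verify that the various collision partitions arising from pair partitions do not overcount, and that the union-bound organization of collisions matches the prescribed index set in \eqref{ximomR}; the $(1+C_m\alpha)^k$ factor absorbs both the Touchard corrections on non-pair blocks and the collision corrections on pair partitions, while the constant $3$ accommodates the resulting multiplicities.
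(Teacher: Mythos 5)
Your proposal is correct and follows essentially the same route as the paper: split the RHS of Lemma~\ref{lem:corr-exact} into pair and non-pair partitions, use the disjoint decomposition $\mu(A_I)=\sum_{J\supset I}\mu(B_J)$ together with a union bound over collisions $J_a=J_b$ (merging the two blocks into one of size $4$) for the pair part, and the Touchard bound $|\tilde T_n(\mu(B_J))|\leq(1+C_m\alpha)\mu(B_J)$ after dropping the distinctness constraint for the non-pair part. The only minor difference is that you sharpen the collision estimate with $\mu(B_J)^2\leq\alpha\,\mu(B_J)$ where the paper simply uses $\mu(B_J)^2\leq\mu(B_J)$; both land inside the same index set and prefactor $3(1+C_m\alpha)^k$ in \eqref{ximomR}.
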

\begin{remark}
We note that Lemmas \ref{lem:corr-exact} and \ref{lemma-correlation} in fact hold for a general Poisson point process $\xi$ on a Polish space $X$ with $\sigma$-finite intensity measure $\mu$, and $(A_i)_{1\leq i\leq m}$ are subsets of $X$ with $\mu(A_i)<\infty$ for each $i$. The proof is exactly the same.
\end{remark}

\begin{proof} By \eqref{e:Touchard}, for any Poisson random variable $\xi_B$ with mean $\tilde \alpha\leq 1$, its centered moment satisfies
$$
\max_{1\leq k\leq m} |\E[\tilde \xi^k_B]|\leq  \tilde \alpha + C_m \tilde \alpha^2,
$$
where $C_m$ depends only on $m$.

If $\alpha\leq 1$, then we can bound the contributions to the sum in
\eqref{e:corr-exact} with $|I_i|>2$ for some $i$, by
\begin{equation}\label{ximom2}
\begin{aligned}
& \Big|\!\!\!\!\sum_{I_1, \ldots, I_k \, partition\, {[m]} \atop \min_i |I_i|\geq 2; \ \max_i|I_i|>2} \sum_{J_i\supset I_i \, \forall\, 1\leq i\leq k \atop J_i\neq J_{i'} \,\forall\, i\neq i'} \prod_{i=1}^k \E[\tilde \xi_{B_{J_i}}^{|I_i|}]\Big| \\
\leq& \!\! \sum_{I_1, \ldots, I_k \, partition\, {[m]} \atop \min_i |I_i|\geq 2; \ \max_i|I_i|>2}
\sum_{J_i\supset I_i \, \forall\, 1\leq i\leq k} \prod_{i=1}^k \E[|\tilde \xi_{B_{J_i}}^{|I_i|}|] \\
\leq& \sum_{I_1, \ldots, I_k \, partition\, {[m]} \atop \min_i |I_i|\geq 2; \ \max_i|I_i|>2}
\prod_{i=1}^k \sum_{J_i\supset I_i} (1+C_m\alpha)\mu(B_{J_i}) = \!\!\!\!\!\!\!\!\!\!\!\!\!\! \sum_{I_1, \ldots, I_k \, partition\, {[m]} \atop \min_i |I_i|\geq 2; \ \max_i|I_i|>2}
\!\!\!\!\!\!\!\!\!\!\!\!\! (1+C_m\alpha)^k\prod_{i=1}^k \mu(A_{I_i}).
\end{aligned}
\end{equation}
In the sum in \eqref{e:corr-exact}, when $|I_i|=2$ for all $i$ and we remove the constraint that $J_i\neq J_{i'}$, then we get precisely the first term in \eqref{ximom} since $\mu(A_{I_i}) = \sum_{J_i \supset I_i} \E[\tilde \xi_{B_{J_i}}^2]$. The difference can be bounded by
\begin{equation}\label{ximom3}
\begin{aligned}
& \sum_{I_1, \ldots, I_k \, partition\, {[m]} \atop |I_i|=2 \,\forall\, 1\leq i\leq k} \sum_{J_i\supset I_i \, \forall\, 1\leq i\leq k \atop J_a= J_b \, for \, some\, a\neq b} \prod_{i=1}^k \mu(B_{J_i}) \\
\leq & \sum_{I_1, \ldots, I_k \, partition\, {[m]} \atop |I_i|= 2 \,\forall\, 1\leq i\leq k} \sum_{1\leq a\neq b\leq k}
\sum_{J_i\supset I_i \, \forall\, 1\leq i\leq k \atop J_a= J_b} \prod_{i=1}^k \mu(B_{J_i})
\\
\leq & \sum_{I_1, \ldots, I_k \, partition\, {[m]} \atop |I_i|= 2 \,\forall\, 1\leq i\leq k} \sum_{1\leq a\neq b\leq k}
\mu(A_{I_a\cup I_b})\prod_{1\leq i\leq k\atop i\neq a, b} \mu(A_{I_i}) \\
\leq & \sum_{J_1, \ldots, J_{k-1}\, partition\, {[m]} \atop \min_i |J_i|= 2, \max_i |J_i|=4} 3 \prod_{i=1}^{k-1} \mu(A_{J_i}),
\end{aligned}
\end{equation}
where in the last step, we replaced $I_a$ and $I_b$ by a single partition element $I_a\cup I_b$. Combining the above estimates then gives \eqref{ximom} with the error bound \eqref{ximomR}.
\end{proof}

%

\section{Convergence of environment to Gaussian field}\label{S:fieldconv}

In this section, we prove Proposition \ref{P:xilim}. We start with a weaker version.  Recall that $(\xi(t,x))_{t\geq 0, x\in\Z^d}$ is the  Poisson field of random walks with mean $\lambda$ as defined  in \eqref{e:xi}, and $\tilde \xi(\cdot, \cdot):= \xi(\cdot, \cdot)-\lambda$.

\begin{lemma}\label{lem:distri-converge}
As $\e\downarrow 0$,
$\tilde\xi_\e \eqdef \e^{-\frac12}\tilde \xi(\e^{-2}\cdot, \e^{-1}\cdot)$ converges in distribution to the Gaussian field $\sqrt{\lambda}\Xi$
defined as in \eqref{Xi} in the sense that for any $\phi \in C_c([0,\infty)\times \R)$,
\begin{equ}[e:xiphi]
\tilde \xi_\e(\phi):=  \e\!\sum_{x\in \e\Z}\int_0^\infty \phi(t, x) \e^{-\frac12} \tilde \xi(\e^{-2}t, \e^{-1}x) {\rm d}t
\end{equ}
converges in distribution to a centered normal random variable $\sqrt{\lambda}\Xi(\phi)$ with variance
\begin{equ}[e:xiphivar]
2 \lambda \int_{s<t} \int_{\R^2} \phi(s,x) p_{t-s}(y-x) \phi(t,y) dxdydsdt
\end{equ}
\end{lemma}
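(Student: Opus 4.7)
The plan is to exploit the Poisson structure directly: represent $\tilde\xi_\e(\phi)$ as a compensated Poisson integral on path space, apply the L\'evy--Khintchine formula, and reduce Gaussian convergence to a single variance computation.

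View $\xi$ as a Poisson point process on the space of continuous-time simple random walk trajectories with intensity measure $\mu=\lambda\sum_{y\in\Z}\bP_{(0,y)}$, and set
\[
F_\e(\omega):=\e^{1/2}\int_0^\infty\phi(t,\e\,\omega_{\e^{-2}t})\,dt\;.
\]
Using $\xi(r,u)=\sum_{\omega\in\xi}\mathbf{1}_{\omega_r=u}$, one immediately has $\tilde\xi_\e(\phi)=\int F_\e\,d(\xi-\mu)$. Since $\supp\phi\subset[0,T]\times[-R,R]$, this yields the deterministic sup-bound $\|F_\e\|_\infty\le T\|\phi\|_\infty\,\e^{1/2}=O(\e^{1/2})$. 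The L\'evy--Khintchine formula for compensated Poisson integrals then gives
\[
\E\bigl[e^{is\tilde\xi_\e(\phi)}\bigr]=\exp\!\Bigl(\int(e^{isF_\e}-1-isF_\e)\,d\mu\Bigr)\;,
\]
and combined with $|e^{iu}-1-iu+\tfrac12u^2|\le\tfrac16|u|^3$, the integrand differs from $-\tfrac{s^2}{2}F_\e^2$ by at most $\tfrac16|sF_\e|^3$, whose $\mu$-integral is bounded by $\tfrac{|s|^3}{6}\|F_\e\|_\infty\int F_\e^2\,d\mu=O(\e^{1/2})\to 0$ provided $\int F_\e^2\,d\mu$ stays bounded. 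Thus the Gaussian CLT reduces to proving $\int F_\e^2\,d\mu=\mathrm{Var}(\tilde\xi_\e(\phi))\to\sigma^2$, with $\sigma^2$ the variance in \eqref{e:xiphivar}; L\'evy's continuity theorem then concludes.

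For the variance, the two-point correlation formula \eqref{e:2pt} yields
\[
\mathrm{Var}(\tilde\xi_\e(\phi))=\e\lambda\sum_{x,y\in\e\Z}\iint\phi(s,x)\phi(t,y)\,P_{\e^{-2}|t-s|}(\e^{-1}(y-x))\,ds\,dt\;.
\]
The local CLT for the continuous-time simple random walk (Appendix~\ref{sec:HK}) gives $\e^{-1}P_{\e^{-2}|t-s|}(\e^{-1}(y-x))\to p_{|t-s|}(y-x)$ pointwise, while $\e^2\sum_{x,y\in\e\Z}$ is the Riemann sum for $\iint dx\,dy$. Formally the limit therefore equals $\lambda\iiiint\phi(s,x)\phi(t,y)p_{|t-s|}(y-x)\,ds\,dt\,dx\,dy$, which by the $(s,x)\leftrightarrow(t,y)$ symmetry (using $p_t(-z)=p_t(z)$) equals $\sigma^2$.

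The main technical hurdle is justifying the passage to the limit uniformly near the diagonal $s=t$, where both the rescaled kernel and the continuum heat kernel $p_{|t-s|}(y-x)$ are singular. I would invoke the uniform heat-kernel bound $P_r(z)\lesssim(1+r)^{-1/2}$ from Appendix~\ref{sec:HK} to produce the envelope $P_{\e^{-2}|t-s|}(\e^{-1}(y-x))\lesssim\e/\sqrt{|t-s|}$ (which is trivially valid when $|t-s|\le\e^2$, as the RHS then exceeds $1$); together with the compact support of $\phi$, this provides the locally integrable $(s,t)$-majorant $C\|\phi\|_\infty^2/\sqrt{|t-s|}$ (on a bounded spatial box of size $O(\e^{-1})^2$ compensating the Riemann sum prefactor), to which dominated convergence applies. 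All other steps are routine.
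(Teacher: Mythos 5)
Your proof is correct, and although it reaches the same destination as the paper's, it travels a genuinely different road: the paper conditions on the initial Poisson field $\xi(0,\cdot)$ to produce the explicit product formula \eqref{eq:Laplace} for the Laplace transform, then Taylor-expands the single-walk expectation $w_\e(y)-1$ to second order in $\kappa$, carefully carrying the $(1+o(1))$ factors; you instead recognize $\tilde\xi_\e(\phi)=\int F_\e\,d(\xi-\mu)$ as a compensated Poisson integral on path space and invoke the L\'evy--Khintchine exponent directly. Your version is tidier at the CLT step, because the sup bound $\|F_\e\|_\infty\le T\|\phi\|_\infty\e^{1/2}$ kills the cubic remainder $\int|sF_\e|^3\,d\mu\le\|F_\e\|_\infty\int s^2F_\e^2\,d\mu=O(\e^{1/2})$ in one line, whereas the paper's bookkeeping of the Taylor error is slightly more elaborate; it is also conceptually aligned with the paper's own Section~\ref{S:Poisson}, which already sets $\xi$ up as a Poisson point process on trajectory space with intensity $\mu$ in \eqref{e:mu}. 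The trade-off is mild: your use of the characteristic function rather than the Laplace transform sidesteps any discussion of exponential integrability (harmless here, but a small simplification), while the paper's method is more elementary in that it never needs the abstract compensated-integral L\'evy--Khintchine formula. The variance computation, including the treatment of the near-diagonal singularity, is the same in both proofs: you and the paper both isolate $|t-s|$ small, use the uniform kernel bound to show that region contributes negligibly, and apply the local CLT plus Riemann-sum convergence on the complement; your dominated-convergence phrasing with the envelope $\e^{-1}P_{\e^{-2}|t-s|}(\e^{-1}(y-x))\lesssim|t-s|^{-1/2}$ on the compactly supported box is an acceptable alternative to the paper's explicit $\delta$-cutoff argument.
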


\begin{proof}
Let $\phi \in C_c([0,\infty)\times \R)$, and
\begin{equ}[e:tilde-xi-phi]
\tilde \xi_\e(\phi) :=  \e^{\frac12}\sum_{x\in \e\Z}\int_0^\infty \phi(t, x)  \tilde \xi(\e^{-2}t, \e^{-1}x) {\rm d}t = \e^{\frac52}\sum_{x\in \Z}\int_0^\infty \phi(\e^2 t, \e x)  (\xi(t, x) -\lambda) {\rm d}t.
\end{equ}
To prove the convergence, it suffices to show that the Laplace transform of $\tilde \xi_\e(\phi)$ converges  to that of the correct normal. By \eqref{e:xi}, for any $\kappa\in\R$, we can integrate out the i.i.d.\ Poisson random variables $\xi(0,\cdot)$ to obtain
\begin{align} \label{eq:Laplace}
\E\left[ \exp\left\{\kappa \tilde \xi_\e(\phi) \right\}\right]
&= \Big(\prod_{y\in \Z}  \exp\left\{\lambda\left(w_\e(y) -1\right) \right\}\Big)
\\
&\qquad \cdot
\exp \Big\{ -\kappa \lambda \e^{\frac52} \sum_{x\in\Z} \int_0^\infty \phi(\e^2t, \e x) {\rm d}t \Big\}, \notag
\end{align}
where, denoting the expectation w.r.t a random walk $Y$ starting from $y$ by $\E^Y_y$,
$$
w_\e(y) := \E^Y_y\left[
\exp\left\{\kappa \e^{\frac 52}\int_0^\infty \phi(\e^2 t, \e Y_t){\rm d}t \right\}\right].
$$
Note that to obtain \eqref{eq:Laplace}, we have
used  the independence of the walks $Y^{y,i}$  and  $\xi(0,y)$ which allows us to write
\[
\E  \left[\exp\Big\{\sum_{i=1}^{\xi(0,y)}\kappa \e^{\frac 52}\int_0^\infty \phi(\e^2 t, \e Y_t^{y,i}){\rm d}t \Big\} \right]
=
\E [ w_\e(y)^{\xi(0,y)}  ]
\]
and then apply the formula of moment generating function of Poisson variables to $\xi(0,y)$.

Since $\phi$ has compact support in time, by Taylor expansion, we have
\begin{align*}
 w_\e(y)-1
& =  \kappa \e^{\frac 52} \! \int_0^\infty \!\! \E^Y_y[\phi(\e^2 t, \e Y_t)] {\rm d}t
\\ &\qquad
+ (1+o(1)) \kappa^2\e^5 \! \iint\limits_{0<t_1<t_2} \!\E^Y_y[\phi(\e^2t_1, \e Y_{t_1})\phi(\e^2t_2, \e Y_{t_2})] {\rm d}t_1 {\rm d}t_2.
\end{align*}
Here $o(1)$ stands for a quantity which vanishes as $\e\to 0$ uniformly in $y$ (due to our assumption on $\phi$).
Summing over $y$, we use the translation invariance of the random walk to obtain
\begin{align}
& \lambda \sum_y (w_\e(y)-1)  = \kappa \lambda \e^{\frac52} \sum_{x\in\Z} \int_0^\infty \phi(\e^2t, \e x) {\rm d}t  \\
& \quad  + (1+o(1)) \kappa^2 \lambda \e^5\!\!\! \sum_{x_1, x_2\in\Z} \ \ \iint\limits_{0<t_1<t_2} \phi(\e^2 t_1, \e x_1) \phi(\e^2 t_2, \e x_2) P_{t_2-t_1}(x_2-x_1) {\rm d}t_1 {\rm d}t_2. \nonumber
\end{align}
Substituting into \eqref{eq:Laplace}, we find that the Laplace transform equals
\begin{equation}\label{eq:expt}
\exp\Big\{(1+o(1)) \kappa^2 \lambda \e^5\!\!\! \sum_{x_1, x_2\in\Z} \ \iint\limits_{0<t_1<t_2} \!\!\! \phi(\e^2 t_1, \e x_1) \phi(\e^2 t_2, \e x_2) P_{t_2-t_1}(x_2-x_1) {\rm d}t_1 {\rm d}t_2  \Big\}.
\end{equation}
Fix $\delta>0$. Using the fact that $\phi$ has compact space-time support and bounding $\phi(\e^2 t_2, \e x_2)$ by $|\phi|_\infty$, it is easily seen that the contribution to the exponent from $t_2-t_1\le\delta \e^{-2}$ can be made arbitrarily small by choosing $\delta>0$ small. On the other hand, by the local central limit theorem, with $\tilde t_i=\e^{2} t_i$, $\tilde x_i= \e x_i$,
as $\e\downarrow 0$, we have
$$
P_{t_2-t_1}(x_2-x_1)= \frac{\e(1+o(1))}{\sqrt{2\pi (\tilde t_2- \tilde t_1)}} \exp\left\{-\frac{(\tilde x_2- \tilde x_1)^2}{2(\tilde t_2-\tilde t_1)}\right\}
$$
uniformly in $\tilde t_2-\tilde t_1>\delta$ and $\tilde x_1, \tilde x_2$ in the support of $\phi$. By a Riemann sum approximation, \eqref{eq:expt} is then seen to converge to
$$
\exp\Big\{\frac{1}{2} \kappa^2\cdot  2\lambda  \idotsint\limits_{0<\tilde t_1<\tilde t_2 \atop \tilde x_1, \tilde x_2\in\R} \phi(\tilde t_1, \tilde x_1) \phi(\tilde t_2, \tilde x_2) p_{\tilde t_2- \tilde t_1}(\tilde x_2- \tilde x_1){\rm d} \tilde x_1 {\rm d} \tilde x_2 {\rm d}\tilde t_1 {\rm d} \tilde t_2 \Big\},
$$
which is exactly the Laplace transform of a centered Gaussian with variance specified in \eqref{e:xiphivar}, which proves the lemma.
\end{proof}
\medskip

Lemma \ref{lem:distri-converge} identifies the distributional limit of the centered and rescaled random field $\tilde \xi_\e$.  We now boost it to show convergence
in the weighted Besov-H\"older space, defined in Appendix \ref{S:Besov}.

\begin{proof}[Proof of Proposition~\ref{P:xilim}]
Given Lemma~\ref{lem:distri-converge}, which identifies the limit as $\sqrt{\lambda}\Noise$, we only need to prove tightness of $(\tilde \xi_\e)_{\e>0}$ in the weighted Besov-H\"older space
$ \CC^\alpha_\ka$ defined in Appendix \ref{S:Besov}, and more specifically, verify the conditions in Proposition \ref{prop:tightness-cri}.
Since $\tilde\xi_\e$ is stationary in time, and we only prove convergence in law, we can assume that the field  $\tilde\xi_\e$ is defined over all time $t\in\R$.

We extend $\tilde \xi_\e $ to the continuum by  piecewise constant interpolation as follows.
For each $x\in \R$, we write
$\langle x \rangle = \e \lfloor \e^{-1}x \rfloor$.
We then define the extension
$\tilde \xi_\e (t,x)\eqdef \tilde\xi_\e (t,\langle x \rangle)$ for $(t,x)\in \R^2$.
For $z=(t,x)\in \R^2$ we also write $\langle z \rangle = (t,\langle x\rangle)$.
We note that by Proposition~\ref{prop:tightness-cri} with $d=1$, $\alpha'=-\frac12$,  to prove that $(\tilde \xi_\e)_{\e>0}$ is tight in $\CC^\alpha$ for any $\alpha<-\frac12$, it suffices to show  that for any fixed test function $\phi$, uniformly in $\e>0$ and $\Ell\in(0,1)$, the following bound
\begin{equ}[e:mombnd-xi]
\E  [\tilde \xi_\e (\phi^{\Ell})^{2n} ]
\lesssim \Ell^{-n}  \;,
\end{equ}
holds for every $n\ge 1$,
where $\phi^{\Ell}:=\phi_{(0,0)}^{\Ell}$ is given in \eqref{e:phil}.
Here and in the sequel of this proof, the constant multiple that is implicit in $\lesssim$ is {\it independent} of $\e$ and $\Ell$.
Note that the bound \eqref{e:mombnd-xi} is enough to imply \eqref{e:tight1}, \eqref{e:tight2}:
by {\it stationarity}, we can drop the supremum over $z$ therein and
just consider test functions centered at $z=0$,
and our parameter $\Ell$ can be taken as $\Ell=2^{-m}$ such that our $\phi^{\Ell}$ can be taken as  $2^{3m} \psi^{(i)} (2^{2m} (\cdot ),2^{m} (\cdot ))$ in \eqref{e:tight2}.
The bound \eqref{e:mombnd-xi} does not have to be uniform in $n$, since for any fixed $\alpha<-\frac12$ we only need to bound a moment of finite  order.

We first illustrate the idea of the proof with the case $n=1$.
By \eqref{e:2pt} or Lemma~\ref{lem:xiAAAPoi}, one has $\E [\tilde\xi_\e (w) \tilde\xi_\e (z)]=\lambda P^\e (w-z)$
where $w,z \in \R\times \e\Z$ are space-time points, and
\begin{equation}\label{e:Pe}
P^\e (z) := \e^{-1}  P_{\e^{-2}|t|} (\e^{-1}x), ~~z=(t,x)\in \R\times \e\Z,
\end{equation}
 is the rescaled random walk transition kernel.
One then has
\begin{equ}[e:2ndmom-ex]
\E  [\tilde \xi_\e (\phi^{\Ell})^{2} ]
= \lambda\int_{\R^4}    
\phi^{\Ell}(w)\phi^{\Ell}(z)
P^\e (\langle w\rangle -\langle z\rangle ) \,dwdz\;.
\end{equ}
To estimate this integral,
we use the heat kernel bound
\[
P^\e (z)\lesssim (\|z\|\vee \e)^{-1}
\]
 for every $z\in\R\times \e \Z$ with $\|z\|\le 1$, where $ \|z\|$ is as defined in \eqref{e:para-dist}.
See for instance \cite{HaiMat}, in particular Lemma~5.4, Remark~5.5 and Eq.~(7.7) as well as the paragraph above Eq.~(7.14) therein.
\footnote{Note that in  \cite[Eq.~(7.7)]{HaiMat} the parameter $m$ there is zero in our case, since we do not care about the derivatives of the heat kernel; thus we do not have any issue with nondifferentiability at $t=0$ so that the $\sup_{z\notin P_0}$  in  \cite[Eq.~(7.7)]{HaiMat} is not relevant.
In general one has $P^\e (z)\lesssim (\|z\|\vee \e)^{-d}$ in $d$ spatial dimensions.  Alternatively this can be proved by the method in Section~\ref{sec:HK}.
}

Therefore
\begin{equ}[e:P-1]
P^\e (\langle w\rangle -\langle z\rangle )\lesssim \|w-z\|^{-1}\;.
\end{equ}
Since $\phi^{\Ell}$ has support of diameter $\Ell$,
\eqref{e:2ndmom-ex} is bounded by
\begin{equs}
\Ell^{-6}\int_{\|w\|,\|z\|\leq \Ell}&\|\phi\|_\infty^2
\|w-z\|^{-1} \,dwdz
\lesssim
\Ell^{-6} \int_{\|w\|\leq \Ell} \int_{\|z-w\|\leq 2\Ell}
\|w-z\|^{-1} \,dzdw
\\
&\lesssim
\Ell^{-6} \int_{\|w\|\leq \Ell} \int_{\sqrt{|t|}\lesssim\Ell, |x|\lesssim\Ell}
\frac{1}{\sqrt{|t|}+|x|}\,dtdxdw
\\
&\lesssim
\Ell^{-6} \int_{\|w\|\leq \Ell} \Ell^{2} \,dw
\lesssim \Ell^{-6} \Ell^{2} \Ell^{3}
= \Ell^{-1} \;.
\end{equs}
Note that with our parabolic distance, in estimating the above integrals, each space-time variable should be thought of as having scaling dimension 3.
%

To prove \eqref{e:mombnd-xi} for arbitrary $n$,
given a collection of $2n$ space-time points $\{(t_i,x_i)\}_{i=1}^{2n}$,
invoking
Lemma~\ref{lem:corr-exact},
one has
\begin{equ}
\E  \bigg[\prod_{i=1}^{2n}\tilde \xi (t_i,x_i)  \bigg]
=
 \!\!\!\!
\sum_{I_1, \ldots, I_k \, partition\, {[2n]} \atop |I_i|\ge  2 \,\forall\, 1\leq i\leq k}
\sum_{J_i\supset I_i \, \forall\, 1\leq i\leq k \atop J_i\neq J_{i'} \,\forall\, i\neq i'}
\prod_{i=1}^k \,
\E[\tilde \xi_{B_{J_i}}^{|I_i|}]\;.
\end{equ}
Appling Lemma~\ref{lem:mom-xiBJ}, with $[2n]$ playing the role of $[m]$ therein, and the power $n$ therein being $|I_i|$, one has
\begin{equ}[e:mom-appliedCP]
\bigg|\E  \bigg[\prod_{i=1}^{2n}\tilde \xi (t_i,x_i)  \bigg]\bigg|
\lesssim
 \!\!\!\!
\sum_{I_1, \ldots, I_k \, partition\, {[2n]} \atop |I_i|\ge  2 \,\forall\, 1\leq i\leq k}
\sum_{J_i\supset I_i \, \forall\, 1\leq i\leq k \atop J_i\neq J_{i'} \,\forall\, i\neq i'}
\prod_{i=1}^k \,
\CP(J_i)\;.
\end{equ}
 where $\CP(J)$ is defined in \eqref{e:defCP}.
 The following is a graphic illustration for \eqref{e:mom-appliedCP} for the 8th moment.

\begin{center}
\begin{tikzpicture}[thick,scale=2]

\node[dot,red] (11) at (1,1) {};
\node[dot,red] (12) at (2,1) {};
\node[dot,blue] (13) at (3,1) {};
\node[dot,blue] (14) at (4,1) {};
\node[dot,red] (21) at (1,0) {};
\node[dot,darkgreen] (22) at (2,0) {};
\node[dot,darkgreen] (23) at (3,0) {};
\node[dot,blue] (24) at (4,0) {};

\node[yshift=3ex] at (11) {$t_1$};
\node[yshift=3ex] at (12)  {$t_3$};
\node[yshift=3ex] at (13)  {$t_5$};
\node[yshift=3ex] at (14)  {$t_7$};
\node[yshift=-3ex] at (21) {$t_2$};
\node[yshift=-3ex] at (22)  {$t_4$};
\node[yshift=-3ex] at (23)  {$t_6$};
\node[yshift=-3ex] at (24)  {$t_8$};

%

\draw[very thick,red] (21) to (11);
\draw[very thick,red, bend left=15] (11.north) to (12.north);
\draw[very thick,red, bend left=15] (12.north) to (13.north);

\draw[very thick,darkgreen] (22) to (12);
\draw[very thick,darkgreen, bend right=30] (12.south) to (13.south);
\draw[very thick,darkgreen] (13.south) to (23);

\draw[very thick,blue,bend left=30] (13) to (14);
\draw[very thick,blue] (14.south) to (23.north);
\draw[very thick,blue,bend left=30] (23.north) to (24.north);
\end{tikzpicture}
\center{Figure 1}
\end{center}

Here each dot represents a space-time point (spatial coordinates not drawn) that is being integrated.
The partition has 3 blocks $I_1,I_2,I_3$, represented by 3 different colors. For instance the block $I_1$  consists of the three red points, and $J_1\supset I_1$ consists of four points  that are linked by the 3 red lines which represent the 3 heat kernels of $\CP(J_1)$. The interpretation of graphic notation for $I_2$ (green) and $I_3$ (blue) is analogous.
This graph is showing a situation where the time variables are ordered
such that $ t_2 \le t_1  \le t_3 \le t_5 $,
$t_4 \le t_3 \le t_5  \le t_6 $
and $t_5 \le t_7 \le t_6 \le t_8 $, which explains the arrangement of the heat kernels in each $\CP(J_i)$.

Next, we claim that
\begin{equ}[e:reduce-I-J]
\bigg|\E  \bigg[ \prod_{i=1}^{2n}\tilde \xi (t_i,x_i)  \bigg]\bigg|
\lesssim
\sum_{I_1, \ldots, I_k}
\prod_{i=1}^k \,
\CP(I_i)
\end{equ}
where
$I_1, \ldots, I_k$
is a partition of $[2n]$
and
$ |I_i|\ge  2$  for every  $1\leq i\leq k$.
This holds because $I_i\subset J_i$ and the observation that
\begin{equ}[e:PPbyP]
P_{s_2-s_1}(h_2-h_1)
 	\cdots
	P_{s_{\bar{k}}-s_{\bar{k}-1}}(h_{\bar{k}}-h_{\bar{k}-1})
\le
P_{s_{\bar{k}}-s_1}(h_{\bar{k}}-h_1),
\qquad
\forall \bar{k}>1\;,
\end{equ}
which holds since the left hand side is the probability of the event that a random walk conditioned on
$S_{s_1}=h_1$ satisfies $S_{s_i}=h_i$ for all $2\leq i\leq \bar{k}$, while the right hand corresponds to the event that $S_{s_{\bar{k}}}=h_{\bar{k}}$.
Note that dropping the sum $\sum_{J_1, \ldots, J_k}$ only costs a constant factor depending on $n$.
In the  example above, \eqref{e:reduce-I-J} reduces the graph to
 \begin{center}
\begin{tikzpicture}[thick,scale=2]

\node[dot,red] (11) at (1,1) {};
\node[dot,red] (12) at (2,1) {};
\node[dot,blue] (13) at (3,1) {};
\node[dot,blue] (14) at (4,1) {};
\node[dot,red] (21) at (1,0) {};
\node[dot,darkgreen] (22) at (2,0) {};
\node[dot,darkgreen] (23) at (3,0) {};
\node[dot,blue] (24) at (4,0) {};

\node[yshift=3ex] at (11) {$t_1$};
\node[yshift=3ex] at (12)  {$t_3$};
\node[yshift=3ex] at (13)  {$t_5$};
\node[yshift=3ex] at (14)  {$t_7$};
\node[yshift=-3ex] at (21) {$t_2$};
\node[yshift=-3ex] at (22)  {$t_4$};
\node[yshift=-3ex] at (23)  {$t_6$};
\node[yshift=-3ex] at (24)  {$t_8$};

%

\draw[very thick,red] (21) to (11);
\draw[very thick,red, bend left=15] (11.north) to (12.north);

\draw[very thick,darkgreen, bend right=30] (22.south) to (23.south);

\draw[very thick,blue,bend left=30] (13) to (14);
\draw[very thick,blue] (14.south) to (24.north);

\end{tikzpicture}
\center{Figure 2}
\end{center}


Now we rescale,
and pass to macroscopic space-time variables.
Let $\CP^\e$ be a rescaling of $\CP$, which is defined as in
 \eqref{e:defCP} but with each $P $ replaced by $P^\e$ given in \eqref{e:Pe}.
Recall that for a generic function $\phi\in C_c([0,\infty)\times \R)$, $\tilde \xi_\e (\phi)$ was defined in \eqref{e:tilde-xi-phi} and $\tilde \xi_\e(\phi)
  = \e^{\frac52}  \sum_{x\in \Z} \int_\R \phi(\e^2 t, \e x)  \tilde \xi(t, x) {\rm d}t$.
Applying \eqref{e:reduce-I-J}, we have
 \begin{equ}[e:2nmom-rescaled]
\E  [\tilde \xi_\e (\phi^{\Ell})^{2n}]
\lesssim
\int_{(\R \times  \R)^{2n}}
\sum_{I_1, \ldots, I_k}
 \e^{\frac52\cdot 2n}
  \e^{-3\cdot 2n}
   \e^{2n-k}
\prod_{i=1}^k \,
\CP^\e(I_i) \prod_{i=1}^{2n} \phi^{\Ell}( t_i, x_i)
\,d{\vec t} d{\vec x} \;,
\end{equ}
 where $  \e^{-3\cdot 2n} $ arises from switching from microscopic to macroscopic variables,
 $2n-k $ is the total number of heat kernels in
 $\prod_{i=1}^k \CP(I_i)$ (each heat kernel contributes a factor $\e$ when we switch from $\CP$ to $\CP^\e$).
 Note that  we have  $ \e^{\frac52\cdot 2n}
  \e^{-3\cdot 2n}
   \e^{2n-k} =\e^{n-k}$, and $\CP^\e$ is a function of the variables $(t_1, \langle x_1\rangle), \cdots, (t_{2n}, \langle x_{2n} \rangle)$.

Fixing a partition $I_1, \ldots, I_k$, we proceed as in \eqref{e:2ndmom-ex}. Note that the integral in \eqref{e:2nmom-rescaled} factors into integrals over variables in different blocks $I$ of the partition (which can be more intuitively seen from Figure 2). Within each block, if $\Ell \ge \e$,
 \begin{equs}
\Big| \int_{(\R \times \R)^{|I|}}
\CP^\e(I) \prod_{i\in I} \Big( \phi^{\Ell}( z_i)
\,dz_i \Big) \Big|
& \lesssim
\Ell^{-3|I|}\int_{\|z_i\|\lesssim\Ell\, \forall i}
\prod_{i=2}^{|I|}\|z_i - z_{i-1}\|^{-1} dz_1\cdots dz_{|I|}\\
& \lesssim
\Ell^{-|I|+1}
\end{equs}
where we applied \eqref{e:P-1}, and the last step is obtained by integrating out the
variables in the order of $z_1,z_2,\cdots$ one by one, again keeping in mind that each space-time variable $z_i$ has scaling dimension 3.
The integral on the RHS of \eqref{e:2nmom-rescaled} for a fixed partition with $k$ blocks is then bounded by
 \begin{equ}
 \e^{n-k} \Ell^{-2n + k} \lesssim  \Ell^{-n}
\end{equ}
uniformly in $0<\e \le \Ell<1$.
If $\Ell < \e$, we simply bound $P^\e$ by $\e^{-1}$
which yields
 \begin{equ}
\Big| \int_{(\R \times \R)^{|I|}}
\CP^\e(I_i) \prod_{z\in I} \phi^{\Ell}( z)
\,dz \Big|
\lesssim
\e^{-|I|+1} \;.
\end{equ}
The integral on the RHS of \eqref{e:2nmom-rescaled} for a fixed partition with $k$ blocks is then bounded by
 \begin{equ}
 \e^{n-k} \e^{-2n + k} \lesssim  \Ell^{-n}
\end{equ}
uniformly in $0<\Ell<\e$.
Summing over all possible partitions only costs a constant factor depending on $n$, which implies the desired moment bound \eqref{e:mombnd-xi}. This concludes the proof of Proposition \ref{P:xilim}.
\end{proof}

\section{Bounds on polymer partition functions} \label{S:polymer}
In this section, we prove Lemma \ref{L:Zbound}. First we recall an identity for the annealed polymer partition function $\E[Z^\xi_{T, \beta}]$ which we will need for our analysis.

\BL[Annealed partition function] \label{L:annZ}
Let $Z:=Z^\xi_{T, \beta}(0,0)$ be the quenched partition function defined as in \eqref{e:Z}, with $\beta\in\R$. Then we have
\begin{equation}\label{eq:EZ}
\E[Z] =  \bE\left[ \exp\left\{\lambda \beta \int_0^T (v_S(t, S_t) -1){\rm d}t \right\} \right],
\end{equation}
where given the random walk path $S$,
\begin{equation}\label{eq:vS}
v_S(t, y) = \bE^Y_y\left[\exp\left\{\beta \int_0^t \1_{\{Y_s=S_{t-s}\}} {\rm ds}  \right\}  \right]
\end{equation}
where $\bE^Y_y$ is for a simple symmetric random walk $Y$ starting from $y$ at time $0$.
\EL
\begin{proof}
For $\beta>0$, this is proved in \cite[Prop.~2.1 and (2.9)]{GdH06}. Their proof also applies to $\beta<0$ as shown in \cite[Section 2.1]{DGRS12}. The basic idea is to integrate out the Poisson field $\xi$.
In particular, the proof of \eqref{eq:EZ}  relies on the fact that $v_S$ satisfies the following equation
\[
\dfrac{\partial}{\partial s}v_S(s,y)
={\frac12} \Delta v_S(s,y)+ \beta\1_{\{S_{s}=y\}}v_S(s,y), \qquad v_S(0, \cdot)\equiv 1
\]
where $\Delta$ is the discrete Laplacian on $\Z$.
\end{proof}

\BL[Maximum at $S\equiv 0$] \label{L:maxS}
Given a random walk path $S:=(S_t)_{t\geq 0}$, let $v_S$ be defined as in \eqref{eq:vS}. Then for any $\beta\in\R$ and $t>0$,
\begin{equation}\label{eq:maxS}
\exp\left\{\lambda \beta \int_0^T v_S(t, S_t) {d}t \right\}  \leq \exp\left\{\lambda \beta \int_0^T v_0(t, 0) {   d}t \right\},
\end{equation}
where we  have set $S\equiv 0$ in $v_0$.
\EL
\begin{proof} For $\beta\geq 0$, \eqref{eq:maxS} follows from the inequality $v_S(t, S_t) \leq v_0(t, 0)$ for all $S$ and $t\geq 0$,
which is easily seen by Taylor expanding the exponential in \eqref{eq:vS} and using the fact that for a continuous time simple symmetric random walk,  its transition kernel satisfies $P_t(y) \leq P_t(0)$ for all $t\geq 0$ and $y\in \Z$.

The case $\beta<0$ is much more delicate and relies on the so-called Pascal Principle, which interprets the left hand side of \eqref{eq:maxS} as the probability that a random walk following the trajectory $S$ survives among the Poisson field of traps $\xi$
up to time $T$, with the right hand side of \eqref{eq:maxS} corresponding to the walk staying put at the origin. For the proof and further details, see \cite[Prop.~2.1]{DGRS12}.
\end{proof}

\begin{proof}[Proof of Lemma \ref{L:Zbound}] We first prove \eqref{e:Zbound}. Without loss of generality, we may assume $(t,x)=(0,0)$.
Applying the identity \eqref{eq:EZ} to $\CZ_{\e,\beta}:=Z^\xi_{\e^{-2}, \e^{3/2}\beta}(0,0)$ and the comparison inequality \eqref{eq:maxS}, we obtain
\begin{equation}\label{eq:Zepsbd}
\E[\CZ_{\e,\beta}] \leq  \exp\left\{\lambda \beta \e^{\frac32} \int_0^{\e^{-2}} (v_0(t, 0) -1){   d}t \right\},
\end{equation}
where as in \eqref{eq:vS},
\begin{equation}\label{eq:v0t0}
v_0(t,0) =\bE^Y_0\left[\exp\left\{\beta \e^{\frac32} \int_0^t \1_{\{Y_s=0\}} {   ds}  \right\}  \right].
\end{equation}

Let $L_t:=\int_0^t \1_{\{Y_s=0\}}ds$. Then we note that there exists $c>0$ such that for all $a>0$ and $t\geq 0$,
\begin{equ}[e:bound-Lt-eLt]
\bE^Y_0\left[ \frac{L_t}{\sqrt t} \right]  \leq c \quad \mbox{and} \quad
\bE^Y_0\left[e^{a\frac{L_t}{\sqrt{t}}}\right]\le c e^{c a^2}, \qquad t\ge0,
\end{equ}
Indeed, the first bound follows from the local limit theorem $P_s(0) \leq 1\wedge C/\sqrt{s}$, while for the second bound, by Taylor
expanding the exponential and applying the local limit theorem, we have
\begin{align*}
\bE^Y_0\left[e^{a\frac{L_t}{\sqrt{t}}}\right]  & = 1+\sum_{k=1}^\infty \Big(\frac{a}{\sqrt t}\Big)^k \idotsint\limits_{0<t_1<\cdots <t_k<t}
P_{t_1}(0)\cdots P_{t_k-t_{k-1}}(0) {   d}t_1 \cdots {   d}t_k \\
& \leq 1+ \sum_{k=1}^\infty \Big(\frac{a}{\sqrt t}\Big)^k \idotsint\limits_{0<t_1<\cdots <t_k<t} \frac{C^k}{\sqrt{t_1(t_2-t_1) \cdots (t_k-t_{k-1})}}
 {   d}t_1 \cdots {   d}t_k \\
 & \le 1+ \sum_{k=1}^\infty (aC)^k \idotsint\limits_{0<s_1<\cdots <s_k<1} \frac{{   d}s_1 \cdots {   d}s_k}{\sqrt{s_1(s_2-s_1) \cdots (s_k-s_{k-1})(1-s_k)}} \\
 & = 1 + \sum_{k=1}^\infty (aC)^k \frac{\Gamma(\frac{1}{2})^{k+1}}{\Gamma(\frac{k+1}{2})} \leq 1+ C'\sum_{l=1}^\infty \frac{a^{2l}}{l!} \leq ce^{ca^2},
\end{align*}
where we used that the integral is a multivariate Beta function. In particular, uniformly in $0\le t\le \e^{-2}$ and let $a=1$, we obtain by Markov's inequality
$$
\bP^Y_0\left(\frac{L_t}{\sqrt t} > \e^{-\frac14}\right) \leq C e^{-\e^{-\frac14}}.
$$
Recall \eqref{eq:v0t0}, uniformly in $0\le t \le \e^{-2}$, we can now bound
\begin{align}
& \Big|v_0(t, 0)-1 \Big| = \Big| \bE^Y_0\left[\exp\left\{\beta \e^{\frac32}L_t \right\} -1 \right] \Big| \notag \\
\leq \ & \bE^Y_0\left[\Big(\exp\left\{\beta \e^{\frac32}L_t \right\} +1\Big) \1_{\{\frac{L_t}{\sqrt{t}} > \e^{-1/4} \}} \right]
+ \bE^Y_0\left[\Big|\exp\left\{\beta \e^{\frac32}L_t \right\} -1\Big| \1_{\{\frac{L_t}{\sqrt{t}} \leq \e^{-1/4} \}} \right]  \notag
\\
\leq \ &
  \bP^Y_0\Big(\frac{L_t}{\sqrt t} \geq \e^{-\frac14} \Big)
  + \bP^Y_0\Big(\frac{L_t}{\sqrt t} \geq \e^{-\frac14} \Big)^{\frac12} \bE^Y_0\Big[\exp\Big\{\frac{2|\beta| \e^{\frac12}L_t}{\sqrt t}\Big\} \Big]^{\frac12}
  + C|\beta| \e^{\frac32} \E^Y_0[L_t]  \notag
\\
\le \ &  C|\beta| \e^{\frac12} + Ce^{-\frac{1}{2}\e^{-\frac14}}.   \label{e:bound-v0-1}
\end{align}
Here we have used both bounds in \eqref{e:bound-Lt-eLt}, and used
the Taylor remainder theorem
to bound the argument of the second $\bE^Y_0$ by
\[
|\beta| \e^{\frac32} L_t e^{|\beta|\e^{3/2} L_t}
\le |\beta| \e^{\frac32} L_t e^{|\beta|\e^{3/2-1/4}  \sqrt{t}}\le C |\beta| \e^{\frac32} L_t \;.
\]
Note that the bound \eqref{e:bound-v0-1} holds for any $\beta\in\R$. Substituting \eqref{e:bound-v0-1} into \eqref{eq:Zepsbd} then shows that for any $\beta\in\R$,  $\E[\CZ_{\e,\beta}]$ is uniformly bounded in $\e$.

To prove \eqref{e:Ztail}, we recall from \eqref{eq:Zexp} that given $\beta>0$,
\begin{equation}
\CZ_{\e, \beta} := Z^\xi_{\e^{-2},\beta \e^{3/2}}(0,0)  = 1+\sum_{k=1}^\infty \frac{1}{k!} \bE\left[\Big(\beta \e^{\frac32}\int_0^{\e^{-2}} \tilde \xi(s, S_s) {   d}s\Big)^k  \right] =: 1+\sum_{k=1}^\infty \CZ^{(k)}_{\e, \beta}.
\end{equation}
Denote $H_{\e}:=\e^{\frac32}\int_0^{\e^{-2}} \tilde \xi(s, S_s) {   d}s$. Similarly, replacing $\beta$ by $-\beta$, we have
\begin{equation}
\CZ_{\e, -\beta} := Z^\xi_{\e^{-2}, -\beta \e^{3/2}}(0,0)  = 1+\sum_{k=1}^\infty \frac{1}{k!} \bE\left[ (-\beta H_{\e})^k \right] =: 1+\sum_{k=1}^\infty (-1)^k \CZ^{(k)}_{\e, \beta}.
\end{equation}
Note that for any odd $k\in\N$,
\begin{equ}[e:betaH^k]
\frac{|\beta H_{\e}|^k}{k!} \leq 2 \sqrt{\frac{(\beta H_{\e})^{k-1}}{(k-1)!}} \sqrt{\frac{(\beta H_{\e})^{k+1}}{(k+1)!}} \leq \frac{(\beta H_{\e})^{k-1}}{(k-1)!} + \frac{(\beta H_{\e})^{k+1}}{(k+1)!}.
\end{equ}
Therefore for any $m\in\N$, which we assume to be even for notational simplicity,
\begin{align*}
 \E\Big[\Big|\CZ_\e(t, & x)-1  -\sum_{k=1}^m \CZ_\e^{(k)}(t,x) \Big| \Big]
 \leq
  \sum_{k=m+1}^\infty \frac{1}{k!} \E\bE\big[|\beta H_{\e}|^k  \big]
  \\
& \leq 3 \E\bE\Big[ \sum_{k=\frac{m}{2}}^\infty \frac{1}{(2k)!} (\beta H_{\e})^{2k}  \Big]
\leq 3  \E\bE\Big[ \frac{(\beta H_{\e})^m}{m!} \sum_{k=0}^\infty  \frac{(\beta H_{\e})^{2k}}{(2k)!} \Big]
\end{align*}
where the penultimate inequality used
\eqref{e:betaH^k},
and the last inequality used
$(2k)! > m! (2k-m)!$.
With $e^x+e^{-x} \ge 2x^m / m!$ for all $x\in \R$ we can bound the above quantity by
\begin{align*}
\leq \ & 3  \E\bE\left[ \frac{e^{2\beta H_{\e}}+e^{-2\beta H_{\e}}}{2^{m+1}} \cdot \frac{e^{\beta H_{\e}} + e^{-\beta H_{\e}}}{2}\right]
\\
=\ & \frac{3}{2^{m+2}}\E\bE\big[e^{3\beta H_{\e}}+e^{\beta H_{\e}} + e^{-\beta H_{\e}}+e^{-3\beta H_{\e}} \big] \\
=\ & \frac{3}{2^{m+2}} (\CZ_{\e, 3\beta} + \CZ_{\e, \beta} + \CZ_{\e, -\beta} +\CZ_{\e, -3\beta}).
\end{align*}
Since we have shown that $\CZ_{\e, \beta}$ is bounded as $\e\downarrow 0$ for all $\beta\in\R$, \eqref{e:Ztail} follows.
\end{proof}

\section{Convergence of finite order chaos} \label{S:conv}
In this section, we prove Lemma~\ref{L:termconv}.
When $\xi(t,x)$ are i.i.d.\ random variables with exponential moments, the approach used by Alberts, Khanin and Quastel~\cite{AKQ} (see also \cite{CSZ17a,CSZ17b}) was to show that the $m$-th order term converges to an $m$-fold stochastic integral with respect to a space-time white noise, where chaos of different order are mutually orthogonal in $L^2$. The proof of convergence relies on the approximation of iterated stochastic integrals (and their discrete analogues) where the integrands are approximated in $L^2$ by linear combinations of product functions and then apply It\^o isometry. However, for Stratonovich integrals with respect to a correlated noise as in our case, there is no It\^o isometry, and it is not clear how  an approximation as in \cite{AKQ} can be carried out, and in which function space. Therefore we follow a different, more functional analytic approach here, inspired by the recent developments in the study of singular SPDEs.


Before outlining the proof steps, we first recall the setup. Consider the $m$-th order term in the expansion \eqref{eq:Zexp} for the partition function 
\begin{align*}
Z_{T, \beta}^{(m)}(t_0,x_0) =
\beta^m
\sum_{x_1,\cdots,x_m} \;
\idotsint\limits_{t_0<t_1<\cdots < t_m< t_{m+1}=T} \prod_{i=1}^m P_{t_i-t_{i-1}}(x_i-x_{i-1}) \tilde \xi(t_i,x_i) {   d}t_i
\end{align*}
for $(t_0,x_0)\in (0,T)\times \Z$.
Upon scaling
$\beta \mapsto \e^{\frac32}\beta$
 as in \eqref{e:scaling} with $T=\e^{-2}$,
$x_i\mapsto \e^{-2}x_{m+1-i}$,
 and reversing and scaling the time variables by
\[
s_i = 1-\frac{t_{m+1-i}}{T}
\qquad \mbox{and} \qquad t = 1-\frac{t_{0}}{T},
\]
one has that $\CZ_{\e, \beta}^{(m)}$ defined in
\eqref{e:expand-rescaled-Z} is given by
\begin{align}
\CZ_{\e, \beta}^{(m)}(1-t,x)
 &= \beta^m
\int_{([0,1] \times \e\Z)^m} \prod_{i=1}^m P^\e_{s_{i+1}-s_i}(x_{i+1}-x_i) \tilde \xi_\e(1-s_i,x_i) \prod_{i=1}^m (dx_ids_i) \notag
\\
& \stackrel{law}{=}
 \beta^m
\int_{([0,1] \times \e\Z)^m} \prod_{i=1}^m P^\e_{s_{i+1}-s_i}(x_{i+1}-x_i) \tilde \xi_\e(s_i,x_i) \prod_{i=1}^m (dx_ids_i)
	\label{e:Zm_epsbeta}
\end{align}
where
$\int_{\e \Z} {   d} x \eqdef \e \sum_{x\in \e\Z}$,
$(s_{m+1},x_{m+1})\eqdef (t,x)$, $\tilde\xi_\e \eqdef \e^{-\frac12}\tilde \xi(\e^{-2}\cdot, \e^{-1}\cdot)$,
and
\begin{equ}[def:Peps]
P^\e (t,x) \eqdef \mathbf{1}_{t\ge 0 }\e^{-1} P(\e^{-2}t,\e^{-1}x)
\qquad
\mbox{for }(t,x)\in \R\times \e\Z \;.
\end{equ}
Here the characteristic function $ \mathbf{1}_{t\ge 0 }$ automatically  imposes the ordering $0<s_1<\cdots <s_m<s_{m+1}=t<1$ (in particular, \eqref{e:Zm_epsbeta} remains unchanged if we integrate over $([0,t] \times \e\Z)^m$).
Note that the above expression can be viewed as an $m$ times iteration of two consecutive operations: the discrete convolution $f\mapsto P^\e *_\e f$  as defined  in \eqref{e:dis-con} below and the multiplication $f\mapsto \tilde \xi_\e \cdot f$.
Namely we have the recursion $\CZ_{\e, \beta}^{(0)}\eqdef 1$ and
\begin{equ}\label{eq:rec}
\CZ_{\e, \beta}^{(m)} (1-t,x)
=\beta \int_0^t \int_{\e\Z}
P^\e_{t-s}(x-y) \tilde \xi_\e(s,y) \cdot \CZ_{\e, \beta}^{(m-1)}(1-s,y)\, dyds \;.
\end{equ}
Note that the terms in the expansion of $u(t,x)$ in \eqref{e:uexp}, which we denote by $u^{(m)}(t,x)$, satisfy a similar recursive relation  which is given in \eqref{e:u-n}, with $\tilde \xi_\e$ replaced by $\sqrt{\lambda}\Noise$ and $P^\e$ replaced by $p$.

 To prove the convergence of $\CZ_{\e, \beta}^{(m)} (1-\cdot,\cdot)$ to $u^{(m)}(\cdot,\cdot)$, we proceed inductively in $m$ as follows. The starting point is the convergence $\tilde\xi_\e\to \sqrt{\lambda}\Noise$ in $\CC_\ka^{\alpha}$ for any $\alpha<-\frac12$, proved in Proposition~\ref{P:xilim}. Assume that for a given $\alpha'>-\alpha$, we have shown for some $m\in\N$ that $\CZ_{\e, \beta}^{(m-1)}(1-t,x)$ converges
in $\CC^{\alpha'}_{(m-1)\kappa}$ to the limit $u^{(m-1)}(t,x)$. We will prove that the same holds with $m-1$ replaced by $m$. Note that the recursion \eqref{eq:rec} defining $\CZ_{\e, \beta}^{(m)}$ from $\CZ_{\e, \beta}^{(m-1)}$ consists of two operations: the first is multiplication of $\CZ_{\e, \beta}^{(m-1)} \in \CC^{\alpha'}_{(m-1)\kappa} $ by $\tilde\xi_\e \in \CC_\ka^{\alpha}$; the second is space-time convolution with $P^\e$. To prove convergence of $\CZ_{\e, \beta}^{(m)}$ in $\CC^{\alpha'}_{m\kappa}$, it suffices to show that the multiplication is continuous, and the error of replacing $P^\e$ by $p$ is negligible as $\e\to 0$. 

For the continuity of the multiplication, we use the following fact (see for instance \cite[Prop.~4.14]{Hairer14} or \cite[Lemma~B.5]{MourratWeber}, with obvious generalization to the weighted case):
for $\alpha < 0 < \alpha'$ with $\alpha + \alpha' >0$,  the classical multiplication mapping $(Z_1, Z_2) \mapsto Z_1 Z_2$ extends uniquely to a continuous bilinear mapping  $\CC^{\alpha'}_{\kappa_1} \times \CC^{\alpha}_{\kappa_2}
 \to \CC^{\alpha}_{\kappa_1+\kappa_2}$, that is
\begin{equ}[e:Young]
\| Z_1 \, Z_2 \|_{\CC^{\alpha}_{\kappa_1+\kappa_2}} \lesssim \|Z_1 \|_{\CC^{\alpha'}_{\ka_1}} \, \| Z_2\|_{\CC^{\alpha}_{\ka_2}}\;.
\end{equ}
The subsequent convolution with $P^\e$ then increases the regularity back to $\alpha'$, while controlling the error between $P^\e$ and $p$ acting on elements of $\CC^{\alpha}_{m\kappa}$ requires careful discrete Besov space analysis and local limit theorem type of estimates.

\begin{proof}[Proof of Lemma~\ref{L:termconv}]
We first apply Skorohod's representation theorem to couple the discrete and the limiting noises, such that $\tilde\xi_\e$ converges to $\sqrt{\lambda}\Noise$ almost surely in $\CC^{-\frac12-\delta}_\ka$.
Denote by
$\mathcal R\CZ_{\e, \beta}^{(m)} (t,x) \eqdef \CZ_{\e, \beta}^{(m)} (1-t,x)$ the ``time reflection'' of $\CZ_{\e, \beta}^{(m)}$. We will prove that for any $M>0$, almost surely $(\mathcal R\CZ_{\e, \beta}^{(m)})_{m=0}^M \to (u^{(m)})_{m=0}^M$
  in $\prod_{m=1}^M\CC_{m\kappa}^{\frac32-\delta}$   as $\e\to 0$,
  which implies joint convergence in finite dimensional distribution as claimed
  by the lemma.

We will proceed by induction. Note that $\CZ_{\e, \beta}^{(0)}= u^{(0)}= 1$.
Assuming that almost surely with respect to the coupling between $\tilde\xi_\e$ and $\sqrt{\lambda}\Noise$, we have shown $\mathcal R\CZ_{\e, \beta}^{(m-1)}(t,x)\to u^{(m-1)}(t,x)$ in $\CC^{\frac32-\delta}_{(m-1)\kappa}$ as $\e\to 0$, we will prove that $\mathcal R\CZ_{\e, \beta}^{(m)}(t,x) \to u^{(m)}(t,x)$ in $\CC^{\frac32-\delta}_{m\kappa}$.
By the above mentioned bound  \eqref{e:Young} (with $\alpha' = \frac32-\delta$ and $\alpha = - \frac12-\delta$ therein)
one has $\tilde\xi_\e\, \mathcal R\CZ_{\e, \beta}^{(m-1)}\to \sqrt{\lambda}\Noise\, u^{(m-1)}$ in $\CC^{-\frac12-\delta}_{m\kappa}$.
Next, we write
\begin{equs}[e:Zm-um]
\mathcal R\CZ_{\e, \beta}^{(m)} (t,x) - u^{(m)}(t,x)
&=\beta
P^\e
*_\e \Big(
\tilde \xi_\e\mathcal R\CZ_{\e, \beta}^{(m-1)}
- \sqrt{\lambda}\Noise\, u^{(m-1)}
\Big)
\\&\qquad
+ \beta
\Big(P^\e *_\e  \sqrt{\lambda}\Noise u^{(m-1)}
-p* \sqrt{\lambda}\Noise u^{(m-1)}\Big)
 \;,
\end{equs}
where we define $p_t\eqdef 0$ when $t<0$, and \footnote{Note that our notation $*$ and $*_\e$ are slightly different from the standard convolution since the time variable is integrated from $0$.}
 \begin{align}
 (p*f)(t,x) &\eqdef \int_0^\infty \int_{\R} p_{t-s}(x-y) f(s,y)dyds\notag \\
 (P^\e*_\e f)(t,x)& \eqdef \int_0^\infty \int_{\e\Z} P^\e_{t-s}(x-y) f(s,y)dyds \;. \label{e:dis-con}
 \end{align}

We note that the first term on the right hand side
of \eqref{e:Zm-um} vanishes in $\CC^{ \frac32-\delta}$, using a parabolic Schauder estimate stated in Lemma~\ref{lem:Schauder} below, which yields
$\|P^\e *_\e A_\e\|_{\CC_{m\kappa}^{\frac32-\delta}}
\lesssim
\| A_\e \|_{\CC_{m\kappa}^{-\frac12-\delta}} $,
where $A_\e \eqdef \tilde \xi_\e\mathcal R\CZ_{\e, \beta}^{(m-1)}
- \sqrt{\lambda}\Noise\, u^{(m-1)}$.
Since we have shown above that
$\|A_\e\|_{\CC_{m\kappa}^{-\frac12-\delta}} $ converges to $0$ as $\e \to 0$,
$\|\beta P^\e *_\e A_\e \|_{\CC_{m\kappa}^{\frac32-\delta}}$ converges to $0$ as $\e \to 0$.

The fact that the second term on the right hand side
of \eqref{e:Zm-um} vanishes in $\CC_{m\kappa}^{ \frac32-\delta}$
is a consequence of Lemma~\ref{lem:diffSchauder} below, 
together with the fact that
\[
\|\Noise u^{(m-1)}\|_{\CC_{m\kappa}^{-\frac12-\delta}} \lesssim \|\Noise \|_{\CC_{\kappa}^{-\frac12-\delta}} \| u^{(m-1)}\|_{\CC_{(m-1)\kappa}^{\frac32-\delta}} <\infty \;.
\]
by \eqref{e:Young}.
We thus have $\mathcal R\CZ_{\e, \beta}^{(m)}(t,x)\to u^{(m)}(t,x)$  in $\CC_{m\kappa}^{\frac32-\delta}$ as $\e\to 0$.
\end{proof}

\begin{remark}
For SHE with space-time white noise in one spatial dimension,
for instance in the context of \cite{AKQ},
 the above argument based on continuity of multiplication and heat kernel convergence
 would not work. The reason is that
 the  space-time white noise
 lies in the space $\CC^{\alpha}$ with $\alpha < -\frac32$ which is much more singular than our $\Noise$,
 and the solution is only in $C^\beta$ for $\beta<\frac12$,
 thus the condition $\alpha+\beta>0$ for applying \eqref{e:Young} would not be satisfied.
 For the same reason, the argument exploited in this section
 would not be enough to prove convergence of our model in two spatial dimensions.
\end{remark}

\begin{remark}\label{rem:multi-int}
Note that the fact that the multiple integral $u^{(n)}$ in \eqref{e:def-Jn}
is classically well-defined also follows from
Lemma~\ref{lem:Schauder} and \eqref{e:Young}.
\end{remark}

\begin{remark}
It is more difficult to control the
second term on the right hand side
of \eqref{e:Zm-um}, since it involves both the discrete kernel $P^\e$ and the continuum kernel $p$.
A ``diagonal argument'' is often used in the recent literature (e.g.\ \cite{MourratWeber,HS,MR3736653,HaiMat,CanMat}) to get away with the direct comparison between the discrete object and the continuum limit.
The idea there is to introduce an intermediate object $\CZ_{\e,\bar\e, \beta}^{(m)}(t,x)$ (for $\bar\e>\e$),
defined similarly to $\CZ_{\e,\beta}^{(m)}(t,x)$
but with  noise $\tilde\xi_\e *_\e \rho_{\bar\e}$, where $\rho_{\bar\e}$ is a `smooth' mollifier at  scale $\bar\e$. By choosing $\bar\e$ sufficiently small, one could show that the difference between $\CZ_{\e,\beta}^{(m)}$ and $\CZ_{\e,\bar\e, \beta}^{(m)}$
 is small uniformly in $\e<\bar\e$; and then fixing a small $\bar\e$ one could send $\e\to 0$ thanks to smoothness.
One contribution of the present paper is that we follow a more direct approach by directly comparing the discrete object and the limiting object by proving a variant of the local central limit theorem for the discrete heat kernel.
\end{remark}

To complete the proof of Lemma~\ref{L:termconv}, it only remains to state and prove Lemmas~\ref{lem:Schauder} and \ref{lem:diffSchauder}. We first need to introduce some notation and collect some preliminary results.
For a function $f^\e$
on $\R\times \e\Z$, $\alpha \in \R_+\backslash \Z_+$,
we define the discrete analogue of $\CC^\alpha_\ka$ (see Appendix \ref{S:Besov}) by
\begin{align}
\|f^\e\|_{\CC^\alpha_\ka}^{(\e)}
=  \sum_{ |k|\le  \lfloor\alpha\rfloor }
\sup_{z \in \R\times \e\Z}  \frac{|D_\e^k f^\e(z)|}{w_\ka (z)}
 +
  \sum_{|k|=\lfloor\alpha \rfloor} \sup_{z,\bar z \in \R\times \e\Z}
\frac{ |D_\e^k f^\e(z) - D_\e^k f^\e(\bar z)|}{w_\ka (z)(\|z-\bar z\|\vee \e)^{\alpha-\lfloor \alpha \rfloor}}
	\label{e:def-CC-alpha'}
\end{align}
where $k=(k_0,k_1) \in \Z_+^2$ with $|k|\eqdef 2k_0+k_1$,
and
$D_\e^k  = \partial_t^{k_0} \nabla_x^{k_1}$ with
$\nabla_x g (x)\eqdef \e^{-1}( g(x+\e)-g(x))$.
For $\alpha<0$,
let
$\|f^\e\|_{\CC^\alpha_\kappa}^{(\e)}$ be defined analogously as in  \eqref{e:def-CC-alpha}
\begin{equ}[e:def-CC-alpha-e]
\|f^\e\|_{\CC^\alpha_\kappa}^{(\e)}
= \sup_{\Ell\in (0,1)} \sup_{z\in \R \times \e\Z} \sup_{\phi}
\frac{\Big|\int_{ \R \times (\e\Z)^d}  f(w) \phi_z^{\Ell}(w)dw \Big|}{\Ell^{\alpha} w_\ka(z)}
\end{equ}
where $\int_{ \R \times \e\Z} g(t,x)dtdx\eqdef \e\int_{ \R} \sum_{x\in\e\Z} g(t,x)dt $, and
$\sup_\phi$ is over all functions $\phi$ such that $\|\phi\|_{\mathcal C^{r_0}} \leq 1$ and supported in a unit ball.

For functions $f^\e$ defined on $\R\times \e\Z$ and $f$ defined on $\R^{d+1}$, and for $\alpha \in \R_+\backslash \Z_+$,
we compare them in the following way \footnote{Here we write the notation as $\|f^\e;f\|_{\CC^\alpha_\kappa}^{(\e)} $ instead of $\|f^\e - f\|_{\CC^\alpha_\kappa}^{(\e)} $, because $f$ is defined in the continuum and $D^k f$ is the derivative in continuum.}
\begin{equs}[e:def-fef]
\|f^\e;f\|_{\CC^\alpha_\kappa}^{(\e)} \eqdef & \sum_{ |k|\le \lfloor\alpha\rfloor }
\sup_{z \in \R\times \e\Z}  \frac{|(D_\e^k f^\e- D^k f)(z )|}{w_\ka (z)}\\
&+ \sum_{|k|=\lfloor \alpha\rfloor} \sup_{z,\bar z \in \R\times \e\Z}
\frac{| (D_\e^k f^\e(z) - D_\e^k f^\e(\bar z) ) -  (D^k f(z) - D^k f(\bar z) ) |}{ w_\ka (z) (\|z-\bar z\|\vee \e)^{\alpha-\lfloor \alpha \rfloor}}.
\end{equs}

We will need the following technical lemma which gives decompositions for the discrete and continuum heat kernels.

\begin{lemma}
\label{lem:decompose-kernel}
Let $d=1$.
We have a decomposition of the heat kernel
$p=K+R$ where  $\|R\|_{\CC^r} <\infty$ for a sufficiently large $r>0$,
and $K=\sum_{n\ge 0} K_n$ such that
$K_n(z)$ is supported on $\{z:\|z\|\le 2^{-n}\}$, and
$|\partial_t^{k_0}\partial_x^{k_1} K_n(z)| \lesssim 2^{(d+2k_0+k_1)n}$ uniformly in $n$ and $z$.

For the discrete kernel $P^\e$ defined in \eqref{def:Peps},
 we have
 $P^\e = K^\e + R^\e$,
  where $\|R^\e\|_{\CC^r}^{(\e)} <\infty$  uniformly in $\e>0$ for a sufficiently large $r>0$,
$K^\e=\sum_{n=0}^{N-1} K^\e_n + \mathring K^\e$
such that
$K^\e_n$ has the same support and bound as $K_n$ (except $\partial_x$ is replaced by finite difference),
$\e \in [2^{-N},2^{-N+1})$,
 and $\mathring K^\e$
 has support $\{z:\|z\|\le c\e\}$ for some $c>0$ and
$|\mathring K^\e(z)| \lesssim \e^{-d}$ uniformly in $z$  and $\e>0$.
(Basically, $\mathring K^\e$ contains the ``lattice scale'' information of the kernel  $P^\e$.)


Moreover, for $n\in \{0,\cdots,N\}$, one has
\begin{equ}[e:Kn3-Kn-o1]
|(D_\e^k K^\e_n - D^k K_n)(z)| \lesssim o(1) 2^{(d+|k|)n}
\end{equ}
 where $o(1)$ is a constant that vanishes as $\e^{-1} \|z\| \to \infty$ uniformly in $n$ and $z\in \R\times\e\Z$.
Finally, $R(z)$ and $R^\e(z)$ as well as their derivatives decay faster than any power as $\Vert z\Vert \to\infty$.
\end{lemma}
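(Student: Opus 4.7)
The plan is to build both decompositions from a single parabolic dyadic partition of unity. Fix $\chi \in C_c^\infty(\R^2)$ with $\chi \equiv 1$ on $\{\|z\|\le 1\}$ and $\chi \equiv 0$ on $\{\|z\|\ge 2\}$, and set $\chi_n(t,x) := \chi(2^{2n}t, 2^n x)$; this is adapted to parabolic scaling, is supported in $\{\|z\|\le 2^{-n+1}\}$, and satisfies $|\partial_t^{k_0}\partial_x^{k_1}\chi_n| \lesssim 2^{(2k_0+k_1)n}$. Put $\psi_n := \chi_n - \chi_{n+1}$, which is supported in the parabolic annulus $\{2^{-n-1}\le \|z\|\le 2^{-n+1}\}$, so that $\sum_{n\ge 0}\psi_n = \chi_0$ off the origin.

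For the continuum kernel I will set $K_n := \psi_n\,p$ for $n\ge 0$ and $R := p - \sum_{n\ge 0}K_n$, additionally truncating $R$ outside a compact time window by a rapidly decreasing correction to guarantee the claimed decay at infinity (harmless for the local uses of the lemma). The support statement for $K_n$ is then built in, and the derivative bound $|\partial_t^{k_0}\partial_x^{k_1}K_n| \lesssim 2^{(d+2k_0+k_1)n}$ follows from Leibniz together with the Gaussian estimate $|\partial_t^{k_0}\partial_x^{k_1}p_t(x)| \lesssim t^{-(d+2k_0+k_1)/2}e^{-cx^2/t}$ specialized to $\mathrm{supp}(\psi_n)$, on which $\sqrt{t}\asymp 2^{-n}$ and the Gaussian factor absorbs the regime $x^2\gg t$. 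For the discrete kernel I will apply the same cutoffs but truncate at the lattice scale $\e = 2^{-N}$: $K_n^\e := \psi_n\,P^\e$ for $0\le n<N$, $\mathring K^\e := \chi_N P^\e$, and $R^\e := P^\e - \bigl(\sum_n K_n^\e + \mathring K^\e\bigr)$. The bound on $\mathring K^\e$ is immediate from $P^\e(z)\lesssim(\|z\|\vee\e)^{-d}$ recalled in the excerpt, and the bounds on $K_n^\e$ reduce to the discrete gradient estimates $|D_\e^k P^\e(z)|\lesssim \|z\|^{-(d+|k|)}$ on $\{\|z\|\gtrsim\e\}$, to be established in Appendix~\ref{sec:HK}.

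The main obstacle will be the comparison bound \eqref{e:Kn3-Kn-o1}. The plan is to first establish a derivative-level local central limit theorem
\[
|D_\e^k P^\e(z) - D^k p(z)| \le o(1)\cdot \|z\|^{-(d+|k|)} \qquad\text{as } \e^{-1}\|z\|\to\infty,
\]
again in Appendix~\ref{sec:HK}, via the characteristic-function representation of the continuous-time simple random walk and a standard saddle-point Taylor expansion. Granting this, a discrete Leibniz expansion will give
\[
D_\e^k K_n^\e - D^k K_n \;=\; \sum_{k'\le k}\binom{k}{k'}(D^{k-k'}\psi_n)(D_\e^{k'}P^\e - D^{k'}p) \;+\; O(\e)\cdot(\text{lower-order terms}),
\]
where the $O(\e)$ remainder collects the discrete-vs-continuous discrepancies in derivatives of the smooth factor $\psi_n$ and is negligible because $\e\ll 2^{-n}$ whenever $n\le N$. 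On $\mathrm{supp}(\psi_n)$, $\|z\|\asymp 2^{-n}$ yields $|D^{k-k'}\psi_n|\lesssim 2^{(2(k_0-k_0')+(k_1-k_1'))n}$ and the second factor is $o(1)\cdot 2^{(d+|k'|)n}$, so the product is $o(1)\cdot 2^{(d+|k|)n}$ uniformly in $0\le n\le N$, as required. Fast decay of $R$, $R^\e$ and their derivatives at infinity is then automatic from the time cutoff and the (sub-)Gaussian spatial decay of $p$ and $P^\e$.
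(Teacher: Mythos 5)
Your proposal follows the paper's strategy exactly: a parabolic dyadic partition of unity multiplying both the continuum and discrete kernels, with the comparison bound \eqref{e:Kn3-Kn-o1} deduced from the derivative-level local CLT of Corollary~\ref{C:grad} together with a (discrete) Leibniz rule. The only difference is cosmetic---the paper builds the partition as $\bar\rho_n(z)=\rho(2^n M(z))$ via an auxiliary smooth parabolic ``norm'' $M$ (following Hairer and Hairer--Matetski), while you use telescoping differences $\psi_n=\chi_n-\chi_{n+1}$ of rescaled cutoffs, which is equivalent and arguably more self-contained; you are also slightly more explicit than the paper about how the discrete-vs-continuous derivative discrepancies in the Leibniz expansion are absorbed, but the content matches.
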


\begin{proof}
Except for the bound on $D_\e^k K^\e_n - D^k K_n$,
the proof essentially follows from
\cite{Hairer14,HaiMat}.
In \cite[Lemma~5.4]{HaiMat} the proof relies on
a sequence of functions $\bar \rho_n(z)$ such that
$\sum_n \bar \rho_n(z)=1$ and each $\bar \rho_n$ is supported on  $\{z:\|z\|\in (c2^{-n},c^{-1}2^{-n}) \}$ for some $c>0$. They are constructed as follows. Let $\rho : \R_+ \to [0,1]$ be a smooth ``cutoff function'' such that $\rho(s) = 0$ if $s \notin [1/2, 2]$, and such that $\sum_{n \in \Z}  \rho(2^n s) = 1$ for all $s > 0$ (this can be done by a partition of unity). As in \cite[Proof of Lemma~5.5]{Hairer14},
one can find a smooth ``norm''  $M : \R^{d+1} \to \R_+$ that is smooth, convex, strictly positive and $M(\delta^2 t, \delta x) = \delta M(t,x)$.  We then set
$\bar \rho_n(z) \eqdef  \rho(2^{n} M(z))$.
Then one defines
\begin{equ}[e:TildeKDef]
K^\e_n(z) = \bar \varrho_n(z) P^\e(z)\;, \qquad
R^\e(z) =\sum_{n < 0} \bar \rho_n(z) P^\e (z)\;, \qquad
\mathring{K}^{\e}(z) = \sum_{n \ge N} \bar \rho_n(z) P^\e(z)\;,
\end{equ}
as well as $K_n(z) = \bar \varrho_n(z) P(z)$ and
$R(z) =\sum_{n < 0} \bar \rho_n(z) P(z)$.
Then it follows immediately that  $p=K+R = \sum_{n = 0}^{\infty} K_n + R $ and
$P^\e =K^\e+R^\e= \sum_{n = 0}^{N-1} K^{\e}_n + \mathring{K}^{\e} + R^\e $.
Since $\sum_{n < 0} \bar \rho_n(z)$ is supported away from the origin, as explained in the proofs of  \cite[Lemma~5.5]{Hairer14} and \cite[Lemma~5.4]{HaiMat},
  $\Vert R \Vert_{\CC^r}$ is bounded and $\Vert R^\e \Vert_{\CC^r}^{(\e)}$ is bounded uniformly in $\e$, and
their values and derivatives  decay faster than any power.

We now bound $|\partial_t^{k_0}\nabla_x^{k_1} K_n^\e|$.
By \cite[Lemma~5.3]{HaiMat} (which can also be derived using the techniques in Appendix~\ref{sec:HK}),
\begin{equ}[e:GHatBound]
\bigl| D_\e^k P^\e (z) \bigr| \leq C\|z\|^{-d - |k|}\quad \forall\, k \in \N^{d+1},\ |k| \leq r \;,
\end{equ}
holds uniformly over $z \in \R_+\times (\e\Z)^d$ with $\|z\| \geq c \e$ for some $c>0$. Also by construction of $\bar\rho_n$ one has $|D_\e^k \bar\rho_n(z)| \lesssim 2^{n|k|} \mathbf 1_{\|z\|\in (c2^{-n},c^{-1}2^{-n})}$.
These together give
\[
|\partial_t^{k_0}\nabla_x^{k_1} K_n^\e(z)| \lesssim 2^{(d+2k_0+k_1)n}.
\]
 The bound $|\mathring K^\e(z)| \lesssim \e^d$ follows similarly.

To bound $ |D_\e^k K_n^\e - D^k K_n|$, note that $K^\e_n (z) - K_n(z) =  \bar \varrho_n(z)  (P^\e (z) - p(z))$.
By Corollary~\ref{C:grad}
\begin{equ}[e:dffGHatBound]
\bigl| (D^k_\e P^\e - D^k p) (z) \bigr| \lesssim o(1) \|z\|^{-d - |k|} \quad \forall\, k \in \N^{d+1} ,\ |k| \leq r \;,
\end{equ}
where $o(1)$ is a constant that vanishes as $\e^{-1} \|z\| \to \infty$ uniformly in $z$. From this the claimed bound on $D_\e^k K^\e_n - D^k K_n$ for each $n$ then follows as above, using the support property and the bound for $\bar\rho_n(z)$ and its derivatives.
\end{proof}

We are now ready to state Lemma \ref{lem:Schauder}, generally known as a Schauder estimate, which roughly states that the (continuum and discrete) heat kernel convolution ``improves regularity by two''. We give a proof for this in the context of our weighted H\"older spaces $\CC_\kappa^\alpha$. We will write
$\int^{(\e)} g(x) dx := \e\sum_{x\in \e\Z} g(x)$, and for $z=(t,x)$, write $\int^{(\e)} g(z) dz : = \int_{\R}\int^{(\e)}  g(t,x)dxdt$.

\begin{lemma}\label{lem:Schauder}
Let $d=1$.
Let $\alpha\in (-\frac23,-\frac12)$,
and $f\in \CC^\alpha_\ka$ for some $\kappa\ge 0$.
Then, there is a constant $C$  such that $\|p*f \|_{\CC^{\bar\alpha}_\ka} \le C \|f\|_{\CC^\alpha_\ka}$,
 where $\bar\alpha = \alpha + 2$.
Moreover, one also has
the discrete analogue of the bound:
 $\|P^\e *_\e f^\e\|_{\CC^{\bar\alpha}_\ka}^{(\e)}
  \le C \|f^\e\|_{\CC^\alpha_\ka}^{(\e)}$.
\end{lemma}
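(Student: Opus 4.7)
The plan is to use the decomposition $p = \sum_{n\ge 0} K_n + R$ from Lemma~\ref{lem:decompose-kernel} and estimate each piece $K_n * f$ at its natural scale $\Ell_n := 2^{-n}$. Since $R$ and its derivatives decay faster than any power, the contribution $R*f$ is trivially controlled in any $\CC^r_\ka$. The parameter range $\bar\alpha = \alpha + 2 - \delta \in (\tfrac{4}{3}-\delta, \tfrac{3}{2}-\delta)$ means $\lfloor \bar\alpha\rfloor = 1$, so I must bound $p*f$ and its first derivative in the weighted sup norm, and bound the Hölder seminorm of $D(p*f)$ at exponent $\gamma := \bar\alpha - 1 = 1+\alpha-\delta$.

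The key observation is that by the support and derivative bounds in Lemma~\ref{lem:decompose-kernel}, for each $k \in \N^{d+1}$ the function $w \mapsto D^k K_n(z-w)$ is, up to a factor of $\Ell_n^{2-|k|}$, a test function of the form $\phi_z^{\Ell_n}$ with $\|\phi\|_{C^{r_0}} \lesssim 1$ uniformly in $n$. Invoking the definition \eqref{e:def-CC-alpha} of $\CC^\alpha_\ka$ then yields
\begin{equ}[e:planKnf]
|D^k(K_n * f)(z)| = |\langle f, D^k K_n(z-\cdot)\rangle| \lesssim \Ell_n^{2-|k|+\alpha}\, \|f\|_{\CC^\alpha_\ka}\, w_\ka(z),
\end{equ}
where I also use that $w_\ka(w) \lesssim w_\ka(z)$ for $w$ in the $\Ell_n$-neighborhood of $z$ (since $\Ell_n \leq 1$). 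Summing \eqref{e:planKnf} over $n \ge 0$ for $|k|=0$ converges because $2+\alpha > 0$, and for $|k|=1$ converges because $1+\alpha > 0$ (here is where $\alpha > -2/3 > -1$ is used), giving the required $\sup$ bounds.

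For the Hölder seminorm of $D(p*f)$, given $z, \bar z$ with $h := \|z-\bar z\|$, I pick $N$ with $2^{-N} \le h < 2^{-N+1}$ and split the sum over $n$. For $n \leq N$ (scales coarser than $h$), the mean value theorem combined with \eqref{e:planKnf} for $|k|=2$ bounds the contribution by $h \cdot 2^{-n\alpha}\|f\|_{\CC^\alpha_\ka} w_\ka(z)$; summing gives $h \cdot h^\alpha = h^{1+\alpha}$. For $n > N$ (scales finer than $h$), I use \eqref{e:planKnf} for $|k|=1$ on each of the two terms in the difference; summing gives $h^{1+\alpha}$ again. Since $h^{1+\alpha} \leq h^\gamma$ for $h \le 1$ (as $\gamma < 1+\alpha$), this yields the needed seminorm bound.

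For the discrete version, the argument is essentially identical using $P^\e = \sum_{n=0}^{N-1} K^\e_n + \mathring K^\e + R^\e$ with $2^{-N}=\e$ and the obvious discrete analogue of \eqref{e:planKnf} (with $\int^{(\e)}$ in place of $\int$ and $D_\e^k$ in place of $D^k$). The truncation at $n=N-1$ automatically matches the lattice cutoff, and $\mathring K^\e$ contributes at worst like a single $K^\e_N$ block at scale $\e$, which sums fine. The replacement of $\|z-\bar z\|$ by $\|z-\bar z\|\vee \e$ in the discrete Hölder norm is harmless: if $h < \e$ the difference $D_\e^{\lfloor\bar\alpha\rfloor}f^\e(z)-D_\e^{\lfloor\bar\alpha\rfloor}f^\e(\bar z)$ vanishes (or one simply stops the splitting at $N-1$ and uses $h\vee\e$ throughout). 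The main obstacle I foresee is bookkeeping uniformity of constants in $\e$, particularly confirming that the test-function reformulation of $D_\e^k K^\e_n(z-\cdot)$ gives a test function bounded in $C^{r_0}$ independently of $\e$ and $n \le N$, and that the weight comparison survives the discrete convolution; both should follow directly from the construction in Lemma~\ref{lem:decompose-kernel}.
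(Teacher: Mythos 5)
Your proposal is correct and follows essentially the same route as the paper: decompose $p = \sum_n K_n + R$ via Lemma~\ref{lem:decompose-kernel}, treat each $D^k K_n(z-\cdot)$ as a rescaled test function at scale $2^{-n}$ to invoke the testing bound (your display \eqref{e:planKnf} is precisely Lemma~\ref{lem:rem-on-Calp} applied to $\chi = 2^{n(2-|k|)} D^k K_n$, which the paper records as \eqref{e:int-Knp-f}), sum geometrically using $1+\alpha>0$, and for the H\"older seminorm split at the scale $\|z-\bar z\|\approx 2^{-N}$ with the mean value theorem on coarse blocks and term-by-term bounds on fine blocks, then absorb the $\delta$-slack via $h\le 1$. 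The discrete version is handled the same way with $\mathring K^\e$ contributing like one extra block at scale $\e$, matching the paper's treatment via \eqref{e:bound-ringK-f}.
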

\begin{remark}
In Lemma~\ref{lem:Schauder} and also Lemma \ref{lem:diffSchauder} below, $\bar\alpha$ can be chosen arbitrarily close to $\frac32$. It would be sufficient for our purposes even with a smaller $\bar\alpha$ (say $\bar\alpha=1$, which means that the bound \eqref{e:Schauder-aim} below is unnecessary; or actually any $\bar\alpha>\frac12$ suffices) because the condition for the bound \eqref{e:Young} would still be satisfied with our noise $\Noise$ having regularity slightly below $-\frac12$. However we state here these Schauder type estimates in the stronger form.
\end{remark}

\begin{proof}
Decompose $p$ and $P^\e$ as in Lemma~\ref{lem:decompose-kernel}.
We first note that convolution of $R$ and $R^\e$
with $f$ has arbitrarily high regularity,
which is due to the fact that the smooth functions $R$ and $R^\e$ decay faster than any power and $f$ grows polynomially;
so it is enough to prove the bound with $P$ and $P^\e$ replaced by $K$ and $K^\e$, respectively.


To simplify the notation, we write $\lesssim_\ka$ for ``less than or equal to  up to a uniform constant times the weight $w_\ka$''.
 By the definition \eqref{e:def-Ckappa-alphaU}, in order to  prove the desired bound on $\|K*f \|_{\CC^{\bar\alpha}_\ka} $, here $\bar \alpha <\frac32$,
we need to prove

\begin{equs}
  \Big|
\int K(z-w)  f(w) dw
\Big|
&=
\Big|
\sum_{n\ge 0}
\int K_n(z-w)  f(w) dw
\Big|
\lesssim_\ka 1,
	 \label{e:Schauder-aim-0'}
\\
\Big|
\int K'(z-w)  f(w) dw
\Big|
&=
\Big|
\sum_{n\ge 0}
\int K'_n(z-w)  f(w) dw
\Big|
\lesssim_\ka 1,
	\label{e:Schauder-aim-0}
\\
\Big|
\int (K'(z-w) - K'(\bar z-w)) f(w) dw
\Big|
&=
\Big|
\sum_{n\ge 0}
\int (K'_n(z-w) - K'_n(\bar z-w)) f(w) dw
\Big|
\\
&\lesssim_\ka
\|z-\bar z\|^{\alpha+1-\delta},
	\label{e:Schauder-aim}
\end{equs}
where $K'=\partial_x K$.
For the  bound \eqref{e:Schauder-aim-0},
on the support of $K_n'$ one has
$\|z-w\|\le 2^{-n}$, and by Lemma~\ref{lem:decompose-kernel} one has $|K_n'|\le 2^{n(d+1)}$, so by  Lemma~\ref{lem:rem-on-Calp} (with $2^{-n}$ and $2^n K_n'$ playing the role of $\Ell$ and $\chi$ therein respectively) and $f\in \CC_\kappa^\alpha$ one has
\begin{equ}[e:int-Knp-f]
\Big|
\int K'_n(z-w)  f(w) dw
\Big| \lesssim_\ka 2^{-n(1+\alpha)} \|f\|_{ \CC^\alpha_\ka}
\end{equ}
which is summable over $n\ge 0$ since $1+\alpha>0$.  The bound \eqref{e:Schauder-aim-0'} can be proved in a similar way.


Now we prove the  bound  \eqref{e:Schauder-aim}.
Let $n_0$ be such that $2^{-(n_0+1)} \le \|z-\bar z\|\le 2^{-n_0}$. We first consider the case $n<n_0$. In this case, by the mean value theorem and the estimate of $D^2K_n$ given in Lemma~\ref{lem:decompose-kernel}, we have
\[
|K_n'(z-w)-K_n'(\bar z-w)| 
\lesssim \|z-\bar z\|2^{n(d+2)} \;.
\]
Note that
 $\supp (K_n'(z-\cdot)-K_n'(\bar z-\cdot))\subset
 \supp K_n'(z-\cdot)\cup \supp K_n'(\bar z-\cdot)
 \subset \{w: \|w-z\|\le 2^{-n+1}\}$, because if $w\in \supp K_n'(\bar z-\cdot)$ then $\|w-z\|\le \|w-\bar z\|+\|\bar z-z\|
 \le 2^{-n}+2^{-n_0}\le 2^{-n+1}$.
Then, by Lemma~\ref{lem:rem-on-Calp}
(with $2^{-n}$ playing the role of $\Ell$ therein) and the regularity assumption  $f \in \CC_\kappa^\alpha$, we have
 \begin{align*}
\sum_{n<n_0}\Big|
\int (K'_n(z-w)   - K'_n(\bar z-w)) f(w) dw
\Big|
& \lesssim_\ka \|z-\bar z\| \sum_{n<n_0}2^{-n\alpha}\\
&\lesssim \|z-\bar z\| 2^{-n_0 \alpha}\lesssim \|z-\bar z\|^{\alpha+1},
\end{align*}
where  the last step follows from $2^{-n_0}\lesssim \|z-\bar z\|$.

%

Consider  next the case $n\ge n_0$.
In this case we bound the two terms in \eqref{e:Schauder-aim} separately,
and both $K'_n$ can be bounded by $2^{n(d+1)}$.
By Lemma~\ref{lem:rem-on-Calp}
(again with $2^{-n}$ playing the role of $\Ell$ therein)
and $f\in \CC_\kappa^\alpha$,
the part $n\ge n_0$ on the LHS of \eqref{e:Schauder-aim}
is bounded by
 \[
\sum_{n\ge n_0} 2^{-n(\alpha+1)}
\lesssim 2^{-n_0 (\alpha+1)} \lesssim \|z-\bar z\|^{\alpha+1}
\]
since $\alpha+1>0$.
Therefore \eqref{e:Schauder-aim} holds and we conclude the proof to the bound on $p*f$.

Regarding the bound on $P^\e*_\e f^\e$, recalling the decomposition of $K^\e$ given in Lemma~\ref{lem:decompose-kernel},
the discrete analogue of  \eqref{e:Schauder-aim-0'}, \eqref{e:Schauder-aim-0}  and \eqref{e:Schauder-aim} can be proved in the same way. For instance, one can prove,  for $k=(0,1)$,
\begin{equ}
\Big| ({D^k_\e} K^\e *_\e f^\e)(z)
\Big|
\le
\Big|
\sum_{n =0}^{N-1}
\int^{(\e)} \!\!\!\!  D^k_\e K_n^\e (z-w)  f^\e(w) dw
\Big|+\Big|
\int^{(\e)} \!\!\!\!  D^k_\e \mathring{K}^\e (z-w)  f^\e(w) dw
\Big|
\lesssim_\ka 1.
\end{equ}
Indeed, for $n\in \{0,1,\cdots,N-1\}$ one has
$
|D^k_\e K^\e_n *_\e f^\e
| \lesssim_\ka 2^{-n(1+\alpha)} \|f^\e\|^{(\e)}_{ \CC^\alpha_\ka}$
as in \eqref{e:int-Knp-f};
and we can  sum over $n$
up to $N-1$ which yields a finite constant uniformly in $N$.
Consider the part involving $\mathring K^\e$,
namely $\int^{(\e)}  D^k_\e \mathring K^\e (z-w)  f^\e (w)dw$.
Recall that 
$\e \in [2^{-N},2^{-N+1})$.
Now on the support of $\mathring K^\e $ one has
$\|z-w\|\lesssim 2^{-N}$, and by Lemma~\ref{lem:decompose-kernel} one has $|\mathring K^\e |\lesssim 2^{Nd}$. By the definition of finite difference, one automatically has  $|D_\e^k \mathring K^\e |\lesssim 2^{N(d+|k|)}$.
Now by  Lemma~\ref{lem:rem-on-Calp} (with $r=1$ and $2^N D^k_\e  \mathring K^\e$ playing the role of $\chi$ and $\Ell=2^{-N}$), one has
\begin{equ}[e:bound-ringK-f]
\Big|
\int^{(\e)} D^k_\e \mathring K^\e (z-w)   f^\e(w) dw
\Big| \lesssim_\ka 2^{-N(1+\alpha)} \|f^\e\|^{(\e)}_{ \CC^\alpha_\ka}
\end{equ}
and the constant $2^{-N(1+\alpha)}$ vanishes as $N\to \infty$ since $\alpha+1>0$.
Thus the part involving $\mathring{K}^\e$ is negligible and we obtain the claimed bound on $D^k_\e K^\e *_\e f$. The discrete analogue of \eqref{e:Schauder-aim-0'} and \eqref{e:Schauder-aim} can be also proved analogously as the continuum case, so we omit its details.
\end{proof}

Recall the distance  $\| f_\e ; f \|_{\CC^{\alpha}_\ka}^{(\e)} $
defined in \eqref{e:def-fef}. The following lemma is needed to bound the second term on the right hand side of \eqref{e:Zm-um}

\begin{lemma}\label{lem:diffSchauder}
Let $\alpha\in (-\frac23,-\frac12)$,$\kappa>0$
and $f\in \CC^\alpha_\kappa$.
Then one has
 $\|P^\e *_\e f ; p*f \|_{\CC^{\bar\alpha}_\ka}^{(\e)}
  \le  o(1) \|f\|_{\CC^\alpha_\ka}$
where $\bar\alpha = \alpha + 2$,
and $ o(1) $ vanishes as $\e\to 0$.
\end{lemma}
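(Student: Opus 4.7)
The plan is to mirror the scale-by-scale proof of Lemma~\ref{lem:Schauder}, inserting at each scale a small factor provided by the consistency bound \eqref{e:Kn3-Kn-o1}. For a distribution $f \in \CC^\alpha_\ka$, I interpret $P^\e *_\e f$ by extending $P^\e(z-\cdot)$ piecewise constantly in the spatial variable to a test function on $\R^2$ and pairing with $f$. Decomposing $p = K + R$ and $P^\e = K^\e + R^\e$ as in Lemma~\ref{lem:decompose-kernel}, the smooth remainders $R, R^\e$ and all their derivatives decay faster than any polynomial, so $R^\e *_\e f - R * f$ contributes at most $O(\e^\gamma) \|f\|_{\CC^\alpha_\ka}$ for any $\gamma > 0$ by a standard Riemann-sum estimate, which is absorbed into $o(1)\|f\|_{\CC^\alpha_\ka}$.

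With $2^{-N} = \e$, writing $K = \sum_{n \ge 0} K_n$ and $K^\e = \sum_{n=0}^{N-1} K_n^\e + \mathring K^\e$, the main difference splits into the scale-aligned sum $\sum_{n<N}(K_n^\e *_\e f - K_n * f)$, the lattice-scale piece $\mathring K^\e *_\e f$, and the continuum tail $\sum_{n\ge N} K_n * f$. The last two are each bounded by $\e^{1+\alpha}\|f\|_{\CC^\alpha_\ka} = o(1)\|f\|_{\CC^\alpha_\ka}$ since $1+\alpha > 0$, by direct repetition of the calculation leading to \eqref{e:bound-ringK-f}. For an aligned scale $n < N$ I would further write
\begin{equation*}
K_n^\e *_\e f - K_n * f = (K_n^\e - K_n) *_\e f + (K_n *_\e f - K_n * f).
\end{equation*}
The second piece (a Riemann-sum error for the smooth kernel $K_n$) has relative test-function amplitude $\lesssim \e \cdot 2^{n}$ at the reference scale $2^{-n}$ (one spatial gradient of $K_n$ times the lattice spacing $\e$, rescaled), yielding a bound of the form $\e \cdot 2^{-n(1+\alpha)}\|f\|_{\CC^\alpha_\ka}$; summing the geometric series over $n < N$ gives a total of $O(\e) = o(1)$. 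The first piece uses \eqref{e:Kn3-Kn-o1}, which on the annular support $\|z-w\| \sim 2^{-n}$ gives the kernel bound $\eta(2^{N-n}) \cdot 2^{(d+|k|)n}$ with $\eta(s) \to 0$ as $s \to \infty$; the same rescaling argument used for \eqref{e:int-Knp-f} in the proof of Lemma~\ref{lem:Schauder} then yields the bound $\eta(2^{N-n}) \cdot 2^{-n(1+\alpha)}\|f\|_{\CC^\alpha_\ka}$.

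To sum this last contribution, introduce the cutoff $n_* := \lfloor N/2 \rfloor$. For $n \le n_*$, $\eta(2^{N-n}) \le \eta(2^{N/2}) = o(1)$ uniformly in $n$, while $\sum_{n \le n_*} 2^{-n(1+\alpha)}$ is bounded since $1+\alpha > 0$; for $n_* < n < N$, $\eta$ is bounded by a constant and $\sum_{n > n_*} 2^{-n(1+\alpha)} \lesssim 2^{-n_*(1+\alpha)} = \e^{(1+\alpha)/2} = o(1)$. This establishes the sup-norm part of $\|P^\e *_\e f; p * f\|^{(\e)}_{\CC^{\bar\alpha}_\ka}$, and the H\"older-difference part follows from the same template combined with the auxiliary $n_0$-cutoff determined by $\|z-\bar z\|$ already used in the proof of Lemma~\ref{lem:Schauder}. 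The main obstacle is balancing the two competing constraints on $n_*$: \eqref{e:Kn3-Kn-o1} gives a tangible $o(1)$ only for $n \ll N$, while the tail $\sum_{n > n_*} 2^{-n(1+\alpha)}$ demands $n_* \to \infty$. Since both are geometric in $n$, the symmetric choice $n_* \approx N/2$ works, and analogous bookkeeping handles the interaction with the $n_0$-splitting in the H\"older portion.
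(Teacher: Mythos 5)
Your proposal follows the paper's route closely: the $K+R$ and $K=\sum_n K_n$ decomposition of Lemma~\ref{lem:decompose-kernel}, the consistency estimate \eqref{e:Kn3-Kn-o1} at aligned scales, the $\e^{1+\alpha}$ bound for the tail and the $\mathring K^\e$ piece, and the balancing cutoff at $n_*\approx N/2$ are all exactly what the paper does (the paper's $\bar n$ with $\sqrt\e\in[2^{-\bar n},2^{-\bar n+1})$ is your $n_*$, and your ``competing constraints'' observation is the reason for the choice \eqref{e:choose e}).

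The one place you go beyond the paper is the further split $K_n^\e *_\e f - K_n*f = (K_n^\e - K_n)*_\e f + (K_n *_\e f - K_n * f)$, which isolates a Riemann-sum discretization error; the paper tacitly replaces $p*f$ by $\int^{(\e)}K'(z-w)f(w)dw$ in its displayed bound and does not discuss this term. Making it explicit is a good instinct, but as written there is a small technical snag: with the piecewise-constant extension you chose, the error test function $\iota_\e K_n(z-\cdot)-K_n(z-\cdot)$ is merely bounded, not $\CC^r$, so Lemma~\ref{lem:rem-on-Calp} does not apply to it verbatim (that lemma needs control of $D^k\chi$ up to order $r>-\lfloor\alpha\rfloor\ge 1$). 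This is fixable --- for instance by using a smooth (rather than piecewise-constant) interpolation of $P^\e$, or by estimating the pairing of $f$ against $\iota_\e K_n - K_n$ by a direct telescoping argument over mesh cells --- and it sits at the same level of informality as the paper's own treatment of that step, so it does not invalidate the proof, but it is a gap you should close if you write this out in full.
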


\begin{proof}
We will use similar argument as in the proof of Lemma~\ref{lem:Schauder}, but
take care in the difference between the discrete and the continuum kernels.
Recall the kernels $K^\e$ and $K$ and their decompositions as introduced in
Lemma~\ref{lem:decompose-kernel},
and we write $(K^\e)'$ for $D_\e K^\e$ and $(K_n^\e)'$ for $D_\e K_n^\e$.
We first consider
the first term in \eqref{e:def-fef} for the definition of
 $\|P^\e *_\e f ; p*f \|_{\CC^{\bar\alpha}_\ka}^{(\e)}$, namely we
aim to prove that given any small constant $\eta>0$,
as $\e$ becomes sufficiently small one has
\begin{equ}
\Big|
\int^{(\e)} ( (K^\e)^\prime - K' )(z-w)  f(w) dw
\Big|
\lesssim_\ka \eta  \|f\|_{\CC^\alpha_\ka}.
\end{equ}
Indeed, given any small constant $\eta>0$,  there exists $M>0$ so that for all $z,\e$ with $\e^{-1}\|z\|>M$, the $o(1)$ constant in \eqref{e:Kn3-Kn-o1} of Lemma~\ref{lem:decompose-kernel} is less than $\eta/C_1$.
We choose $\e$ sufficiently small such that
 \begin{equ}[e:choose e]
 \e < \min ( M^{-2} , (\eta/C_2)^{2/(1+\alpha)} ).
 \end{equ}
  Here $C_1$ and $C_2$ are universal constants which we determine below.
 Let $\bar n$ be such that $\sqrt\e \in [ 2^{-\bar n} , 2^{-\bar n+1})$.

 We first suppose that $n\le \bar n$. Then, when $\|z-w\|$ is in an annulus with radius of order $2^{-n}$, we have
 \[
 \e^{-1}\|z-w\| \ge  \e^{-1}2^{-n} \ge  \e^{-1}2^{-\bar n} >M
 \]
  by our choice of $\e$.
  By \eqref{e:Kn3-Kn-o1} of Lemma~\ref{lem:decompose-kernel} and Lemma~\ref{lem:rem-on-Calp} one has
\begin{equ}
\Big|
\sum_{n=0}^{\bar n}
\int^{(\e)}\!\!\!\! ( (K_n^\e)^\prime - K_n ')(z-w)  f(w) dw
\Big|
\le C w_\ka(z) \|f\|_{\CC^\alpha_\ka} \eta C_1^{-1}\sum_{n\le \bar n} 2^{-n(1+\alpha)}
\le \frac{\eta}{2} w_\ka(z) \|f\|_{\CC^\alpha_\ka}
\end{equ}
where $C_1$ is chosen large enough, recalling that $\alpha+1>0$.

For $n>\bar n$, we bound $D_\e K^\e$ and $K'$ separately by $2^{(d+1)n}$ on their support using Lemma~\ref{lem:decompose-kernel},
and we can deal with the $\mathring{K}^\e$ as in \eqref{e:bound-ringK-f}.
This yields
\begin{equs}
 \Big| \sum_{n =\bar n}^N
\int^{(\e)} & \!\!\!\!\!\! (K_n^{\e} )^\prime (z-w)  f(w) dw
\Big|+
\Big|
\int^{(\e)} \!\!\!\! ( \mathring{K}_n^\e )^\prime(z-w)  f(w) dw
\Big| +
\Big|\sum_{n >\bar n}
\int^{(\e)}\!\!\!\!\!\!  K_n ^\prime (z-w)  f(w) dw
\Big|
\\
&\le C w_\ka(z) \sum_{n> \bar n} 2^{-n(1+\alpha)} \|f\|_{\CC^\alpha_\ka}
\le C  w_\ka(z) 2^{-\bar n(1+\alpha)} \|f\|_{\CC^\alpha_\ka}
\le C  w_\ka(z) \e^{(1+\alpha)/2} \|f\|_{\CC^\alpha_\ka}
\\
&\le C  w_\ka(z) \eta/C_2 \|f\|_{\CC^\alpha_\ka}
\le \frac{\eta}{2} w_\ka(z) \|f\|_{\CC^\alpha_\ka}
\end{equs}
where $C_2$ is chosen sufficiently large and we used our choice of $\e$ in \eqref{e:choose e}.
(Note that  in the two cases $n\ge \bar n$ and $n<\bar n$, the bounds on the heat kernels lead to the same power of $2$ namely $2^{-\bar n(1+\alpha)}$, with the only difference being that in the first case one obtains the $o(1)$ factor.)

By the same argument above combined with
the argument for the proof of \eqref{e:Schauder-aim}, we can prove that
the second term in \eqref{e:def-fef} for the definition of
 $\|P^\e *_\e f ; p*f \|_{\CC^{\bar\alpha}_\ka}^{(\e)}$
 can be made arbitrarily small when $\e$ goes to zero.
%
\end{proof}

 \appendix

\section{Weighted Besov-H\"older spaces} \label{S:Besov}

In this section, we first define the weighted Besov-H\"older spaces and prove some basic properties, in particular, Lemma \ref{lem:rem-on-Calp}. We then formulate a tightness criterion, stated in Proposition~\ref{prop:tightness-cri}.

\subsection{Definition and basic properties} \label{S:DBP}
The functional space we need is the Besov-H\"older space of space-time distributions, introduced in \cite[Section~3]{Hairer14} (see also \cite{MR3724565}); more precisely, since we work on the entire space, we will use a weighted version of these spaces, as in e.g.~\cite{MR3358965}.
 First, we endow the space-time $\R\times \R^d$ with a {\it parabolic} distance, namely for space-time points $(t,x),(\bar t,\bar x)$
 \begin{equ}[e:para-dist]
 \|(t,x)-(\bar t,\bar x) \| := \sqrt{|t-\bar t|} + |x-\bar x| \;.
 \end{equ}
The weight functions we will take here are of the form
\begin{equation}\label{e:weight}
w_\ka(z)=(1+\|z\|)^\kappa
\end{equation} for some $\kappa\ge 0$, as used in similar contexts in \cite[Setion 5]{hhnt15} or \cite{MR3358965}. Note that the weight function has the following property
\begin{equation}\label{e:weight-prop}
C_\kappa^{-1}\le \sup_{\|z_1-z_2\|\le 1}\frac{w_\ka(z_1)}{w_\ka(z_2)} \le C_\kappa
\end{equation}
 for some constant $C_\kappa\ge1$ depending on $\kappa$ only.

Given an arbitrary open subset $U$ of $\R^{d+1}$, we now define the weighted Besov-H\"older space $\CC_\kappa^\alpha(U)$ on $U$. Let $\N_0=\N\cup\{0\}, k=(k_0,k_1,\dots, k_d)\in \N_0\times \N_0^d$ and $|k|=2k_0+\sum_{i=1}^dk_i$.

For $\alpha\in \R_+\backslash \Z_+ $, let $\CC^\alpha_\ka(U)$ be the completion of $\CC_c^\infty(U)$ under the norm
\begin{equ}[e:def-Ckappa-alphaU]
\|f\|_{\CC_\kappa^\alpha(U)}
=  \sum_{|k|\le \lfloor\alpha\rfloor }
\sup_{z \in U} \frac{ |D^k f(z)|}{w_\ka(z)}
+
\sum_{|k|=\lfloor \alpha\rfloor}\sup_{\substack{z,\bar z \in U\\ \|z-\bar z\|\le1}}
\frac{ |D^k f(z) - D^k f(\bar z)| }{w_\ka(z) (\|z-\bar z\|)^{\alpha-\lfloor \alpha \rfloor}}
\end{equ}
where
 \[D^kf(z)=D^kf(t, x):=\frac{\partial^{|k|-k_0}}{\partial t^{k_0}\partial x_1^{k_1}\dots \partial x_d^{k_d}}f(t,x),\]
  for $k=(k_0, k_1, \dots, k_d)$. For $\alpha\in \Z_+$, the norm $\|\cdot\|_{\CC_\kappa^\alpha(U)}$ is simply
\begin{align*}
\|f\|_{\CC_\kappa^\alpha(U)}
= & \sum_{|k|\le \alpha }
\sup_{z \in U} \frac{ |D^k f(z)|}{w_\ka(z)}.
\end{align*}

When $\alpha < 0$, the space $\CC^\alpha_\ka(U)$ is defined as the completion of $\mathcal C_c^\infty(U)$
with respect to
\begin{equ}[e:def-CC-alpha]
\|f\|_{\CC^\alpha_\ka(U)}
= \sup_{\Ell\in (0,1)} \sup_{z\in U} \sup_{\phi}
\frac{ |f(\phi_z^{\Ell})|}{ w_\ka(z) \Ell^{\alpha} }
\end{equ}
 where
 \begin{equation}\label{e:phil}
 \phi_{(s,y)}^{\Ell}(t,x) \eqdef \Ell^{-(d+2)} \phi\left(\Ell^{-2} (t-s ), \Ell^{-1}(x-y)\right),
 \end{equation}
 and $\sup_\phi$ is over all functions $\phi$ with $\|\phi\|_{\mathcal C^{r_0}} \leq 1$ for $r_0=-\lfloor \alpha \rfloor$ and support in the  unit ball $B(0,1)$.
We also used the notation
\[f(\phi_z^{\Ell})=\langle f,  \phi_z^{\Ell}\rangle := \int_{U} f(w)\phi_z^{\Ell}(w)dw.\]

Since $C_c^\infty(U)$ is separable, clearly $C_\kappa^\alpha(U)$ is a Polish space, i.e, a complete separable metric space.  We  write $\CC_\kappa^\alpha$ to be $\CC_\kappa^\alpha(\R^{d+1})$ with $U=\R^{d+1}$, and  $\CC^\alpha$ to be $\CC^\alpha_\ka$ with $\ka=0$. Clearly $\mathcal C_\kappa^\alpha(U)=\mathcal C^\alpha(U)$ if $U$ is bounded. Note also that $\CC^\alpha$ is essentially equivalent to the Besov space $\mathcal B^\alpha_{\infty,\infty}$ (see, e.g., \cite{MR2768550,Treibel06, Hairer14}), with respect to the  parabolic distance in space-time.

We recall the definition of the restriction of a distribution to an open set. For any open set $U\subset \R^{d+1}$ and any distribution $f\in \CC_\kappa^\alpha$, the restriction of $f$ to $U$, denoted by $f|_U$, is defined via
\[\langle f|_U, \varphi\rangle:=\langle f, \varphi\rangle, \quad \forall \varphi\in \CC_c^\infty(U).\]
Thus $\{f|_U: f\in \CC_\kappa^{\alpha}\}\subset \mathcal C_\kappa^{\alpha}(U)$,
and $\|f|_U\|_{\mathcal C_\kappa^{\alpha}(U)}\le \|f\|_{\mathcal C_\kappa^{\alpha}}$.

We say that a distribution $f$ vanishes on an open set $U$, if $\langle f, \varphi\rangle=0$ for all $\varphi\in \CC_c^\infty(U)$.  Recall that the support
$\supp(f)$ of $f$ is the complement of the largest open set where $f$ vanishes. For any compact subset $K\subset\R^{d+1}$, let $\CC_\kappa^{\alpha}(K)=\{f\in\CC_\kappa^\alpha, \,\supp(f)\subset K\}$ with $\|f\|_{\mathcal C_\kappa^{\alpha}(K)} := \|f\|_{\mathcal C_\kappa^{\alpha}}$ for $f\in \CC_\kappa^{\alpha}(K)$.
\medskip

By \eqref{e:def-CC-alpha}, we have $f\in \CC^\alpha_\ka= \CC^\alpha_\ka(\R^d)$ if $f$ integrated against a test function $\phi_z^{\Ell}$ behaves as $\Ell^{\alpha}$. On the other hand, sometimes we would like to exploit the fact $f\in \CC^\alpha_\ka$
and obtain behavior of $f(\chi)$ for some test function $\chi$ which ``behaves like'', but is not really a rescaled test function
$\phi_z^{\Ell}$. This is the content of the following lemma.

\begin{lemma}\label{lem:rem-on-Calp}
Let $f\in \CC^\alpha_\ka$ for $\alpha<0$,  $\kappa >0$, $r>-\lfloor \alpha \rfloor$ be a positive integer, and $C>0$ be a constant. Then,  there exists a constant $\bar C>0$ such that for any  test function $ \chi$
 supported on $\{u:\|u\|\le C\Ell\}$
which satisfies $|D^k\chi(w)|\le C \Ell^{-(d+2+|k|)}$ for every $|k|\le r$,
one has
\[
|f(\chi_z)| \le \bar C  \Ell^\alpha \|f\|_{ \CC^\alpha_\ka} w_\ka(z)\;, \qquad \forall \Ell\in (0,1)\;,
\]
where $\chi_z (\cdot ) \eqdef \chi(\cdot -z)$. The same bound holds for functions on $\R\times \e\Z$ with $ \| \cdot \|_{ \CC^\alpha_\ka}$ replaced by $ \| \cdot \|^{(\e)}_{ \CC^\alpha_\ka}$ defined by \eqref{e:def-CC-alpha-e}.
\end{lemma}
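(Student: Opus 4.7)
My plan is to reduce the claimed estimate directly to the definition \eqref{e:def-CC-alpha} by a rescaling argument: I will exhibit $\chi_z$ as a constant multiple of a standard rescaled test function $\phi_z^M$ of the form appearing in \eqref{e:def-CC-alpha}, with scale $M\asymp\Ell$, and then read off the bound from \eqref{e:def-CC-alpha}.

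Concretely, in the bulk regime $C\Ell\le 1$ I would set $M:=C\Ell$ and define $\phi(t,x):=M^{d+2}\chi(M^2 t,Mx)$. The parabolic scaling built into \eqref{e:phil} then yields the identity $\phi_z^M=\chi_z$. The support hypothesis on $\chi$ converts into $\phi$ being supported in the parabolic unit ball, while the chain rule transforms the bound $|D^k\chi|\le C\Ell^{-(d+2+|k|)}$ into $|D^k\phi|\le C^{d+3+|k|}$ for every $|k|\le r$. Since $r>r_0:=-\lfloor\alpha\rfloor$, the $\CC^{r_0}$ norm of $\phi$ is bounded by some constant $\tilde C=\tilde C(C,d,\alpha,r)$. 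Applying \eqref{e:def-CC-alpha} to the admissible test function $\phi/\tilde C$ at scale $M$ then delivers $|f(\chi_z)|\le \tilde C\,M^\alpha\,w_\ka(z)\,\|f\|_{\CC^\alpha_\ka}$, which is the claim after absorbing the factor $C^\alpha$ into $\bar C$.

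For the residual regime $C\Ell>1$, which can only occur when $C>1$ because $\Ell\in(0,1)$, I would invoke a standard partition of unity. The support of $\chi_z$, a parabolic ball of radius at most $C$, is covered by a fixed number $N=N(C,d)$ of parabolic unit balls centered at points $z_i$ lying within distance $C$ of $z$. Writing $\chi_z=\sum_{i=1}^N\chi^{(i)}_{z_i}$ with each $\chi^{(i)}$ supported in a parabolic unit ball and having derivatives uniformly bounded up to order $r$, I apply the previous case to each piece with $\Ell$ replaced by a constant of order one. The weight ratio $w_\ka(z_i)/w_\ka(z)$ is controlled by $(1+C)^\ka$ via \eqref{e:weight-prop}, and since $\Ell^\alpha\ge 1$ when $\Ell\in(0,1)$ and $\alpha<0$, summing the $N$ terms recovers the claimed bound after absorbing all constants into $\bar C$.

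The discrete version on $\R\times\e\Z$ is handled by exactly the same rescaling, because \eqref{e:def-CC-alpha-e} uses test functions of the identical rescaled form \eqref{e:phil}; the rescaled $\phi$ naturally lives on $\R\times\e\Z$ and enjoys the same $\CC^{r_0}$ bound, and the weight estimate \eqref{e:weight-prop} is indifferent to the discretization. I do not foresee any genuine obstacle; the only point requiring care is to verify that all constants emerging from the chain rule depend solely on $C,d,\alpha,r,\ka$ and not on $\Ell,\e,z$, or on $\chi$ itself, which is manifest from the explicit rescaling.
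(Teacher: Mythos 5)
Your proposal is correct, and it takes a genuinely more elementary route than the paper. The paper proves the lemma by expanding $f(\chi_z)$ in a wavelet basis $\{\psi^n_y,\phi_y\}$, using the characterization $|f(\psi^n_y)|\lesssim 2^{-n(d+2)/2-n\alpha}w_\ka(y)$, and trading the smoothness of $\chi$ (up to order $r$) for geometric decay $2^{-(n-n_0)r}$ in the wavelet coefficients $\chi_z(\psi^n_y)$ for scales $n\ge n_0$ finer than $\Ell\approx 2^{-n_0}$. You instead observe that with $M:=C\Ell$ the map $\phi(t,x):=M^{d+2}\chi(M^2t,Mx)$ satisfies $\phi_z^M=\chi_z$ by the parabolic rescaling in \eqref{e:phil}, the support hypothesis gives $\supp\phi\subset B(0,1)$, and the derivative bounds rescale to $|D^k\phi|\le C^{d+3+|k|}$, so that $\phi/\tilde C$ is an admissible test function in \eqref{e:def-CC-alpha}; the bound then follows by inspection. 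This avoids the wavelet machinery entirely, at the modest price of a separate partition-of-unity argument for the regime $C\Ell\gtrsim 1$ (which the wavelet expansion handles uniformly across scales, since it sums over all $n$ regardless of where $n_0$ falls). Your treatment of that residual regime is sound: the pieces $\chi^{(i)}_{z_i}$ have bounded derivatives, the weight ratios are controlled by \eqref{e:weight-prop}, and $\Ell^\alpha\ge 1$ absorbs the remaining constants. Two small points to tidy up if you were to write this out: choose $M:=\min(C\Ell,1/2)$ (or similar) so that $M$ lies strictly inside the range $(0,1)$ used in \eqref{e:def-CC-alpha}; and in the discrete case one should note that the test functions in \eqref{e:def-CC-alpha-e} are continuum functions evaluated on the lattice, so the rescaled $\phi$ is admissible verbatim, with the only difference being the lattice integration $\int^{(\e)}$.
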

\begin{remark}
The statement is similar to \cite[Remark~2.21]{Hairer14} (which is a statement about the notion of {\it models} therein), and our proof here is inspired by the proof of  \cite[Prop.~3.20]{Hairer14}.
The intuition behind the proof is the following: letting $2^{-n_0}\approx \Ell$, for a test function $\chi_z$ that is ``quite smooth'' (i.e. having the above assumed bound on its derivatives up to order $r>0$), we will get a factor $2^{-(n-n_0)r}$ for its wavelet coefficients which decays fast as $n\to \infty$, so $\chi_z$ ``mainly'' consists of the scale $\sim 2^{-n_0}\approx \Ell$ bits, which explains why it should satisfy the same bound as $\phi_z^{\Ell}$ in  \eqref{e:def-CC-alpha}.
\end{remark}

\begin{proof}
Recall from standard wavelet theory (see for instance \cite{MR951745}) that we can find a finite family of functions $\Psi=\{(\psi^{(i)})_{1 \le i < 2^{d+2}}\}$ and $\phi \in \CC^r$, such that the recentered and rescaled
\footnote{Note that the factor $ 2^{\frac{(d+2)n}{2}}$ indicates a scaling preserving the $L^2$ norm, since we need an $L^2$ basis, which is different from the other notation $\phi^{\Ell}$ we use in this paper which is a scaling preserving the $L^1$ norm.}
functions $\phi_y (\cdot)\eqdef \phi(\cdot -y)$ and
$$
\psi^n_{y} (z) = 2^{\frac{(d+2)n}{2}} \psi (2^{2n} (z_0-y_0), 2^{n} (z_1-y_1)) \quad \mbox{for } \psi\in\Psi,  n\ge 0, y\in \Lambda_n=(2^{-2n}\Z)\times (2^{-n}\Z)^d
$$
form an $L^2$ basis. Here $z=(z_0,z_1)$ denote the time and space coordinates for a space-time point $z$. This $L^2$ basis is useful to characterize elements in $\CC^\alpha_\ka$: for $f\in \CC^\alpha_\ka$ one has  $|f(\psi_y^n)| \lesssim 2^{-n(d+2)/2 - n\alpha} w_\ka(y)$ and $|f(\phi_z)| \lesssim w_\ka(z)$
uniformly in $n\ge 0$, $y\in \Lambda_n$ and $z\in \Lambda_0$. This is essentially  the content of
\cite[Prop.~3.20]{Hairer14} which is easily adapted to the weighted spaces. 

 For each fixed $z\in \R^{1+d}$, $\chi_z$ has the following $L^2$- expansion 
\begin{equation*}\label{e:expan-chi}
\chi_z=\sum_{n\ge0}\sum_{\psi\in\Psi}\sum_{y\in \Lambda_n} \chi_z(\psi_y^n) \psi_y^n +\sum_{y\in\Lambda_0} \chi_z(\phi_y)\phi_y.
\end{equation*}
 Note that the above expansion also converges in $\CC^r$ (\cite[Definition 2.8 and Remark A.6]{MR3724565}).
Thus,  
we have
\[
f(\chi(\cdot -z))=f(\chi_z)
= \sum_{n\ge 0}  \sum_{\psi\in \Psi}\sum_{y\in \Lambda_n} f(\psi_y^n) \chi_z(\psi_y^n)
+ \sum_{y\in \Lambda_0} f(\phi_y) \chi_z(\phi_y)\;.
\]

Note that  by the characterization of  $\CC^\alpha_\ka$ just mentioned,  for  $y$ with $\|y-z\| \lesssim 1$,
we have  $|f(\psi_y^n)| \lesssim 2^{-n(d+2)/2 - n\alpha} w_\ka(z)$ where the proportional constant only depends on $\|f\|_{\CC^\alpha_\ka}$.
Let $n_0$ be such that $\Ell \in  [2^{-n_0}, 2^{-n_0+1})$.

We first fix $n\ge n_0$.
Then, since $\chi_z$ has support size $O(2^{-n_0})$ and $\psi_y^n$ has support diameter $O(2^{-n})$,
only $y$ such that $\|y-z\| \lesssim O(2^{-n_0})$
contribute to the sum (namely, only $O(2^{(d+2)(n-n_0)})$ terms contribute).  By our assumed bound  on $D^k \chi$, we have
$|\chi_z(\psi_y^n)| \lesssim 2^{-(n-n_0) (r+\frac{d+2}{2})} 2^{n_0 (d+2)/2}$, and thus $\sum_{y\in \Lambda_n} |\chi_z(\psi_y^n)|  \lesssim 2^{-(n-n_0) (r-\frac{d+2}{2})} 2^{n_0 (d+2)/2}$, so that $|\sum_{y\in \Lambda_n} f(\psi_y^n) \chi_z(\psi_y^n)|\lesssim 2^{-(n-n_0)r-n\alpha}w_\ka(z)$.

Next, we fix $n < n_0$.
By support sizes of $\chi_z$ and $\psi^n_y$, there are only a finite number (independent of $n_0$ and $n$) that contribute to the sum.
Using again the support properties of $\chi_z$ and $\psi^n_y$ and the assumed bound on $\chi$,
we have
$|\chi_z(\psi_y^n)| \lesssim 2^{n(d+2)/2}$
and thus
$|\sum_{y\in \Lambda_n}\chi_z(\psi_y^n)| \lesssim 2^{n(d+2)/2}$,  so that $|\sum_{y\in \Lambda_n} f(\psi_y^n) \chi_z(\psi_y^n)|\lesssim 2^{-n\alpha}w_\ka(z)$.

Summing over $n$ using the above bounds, we have
$\sum_{n\ge n_0} 2^{-(n-n_0)r-n\alpha} + \sum_{n < n_0}2^{-n\alpha}$, which is bounded by $2^{-n_0 \alpha} \lesssim \Ell^\alpha$, by  \cite[Lemma~3.21]{Hairer14}.

The discrete analog of the bound can be proved in the same way, so we omit its proof.
\end{proof}

\subsection{Tightness criterion}\label{S:T}
In this section, we formulate a tightness criterion for random variables taking values in $\CC^\alpha_\ka$, which is stated in Proposition \ref{prop:tightness-cri} below. First we give some preliminary results.

\begin{lemma}\label{lem:com-emb}
Let $K\subset \R^{d+1}$ be a compact subset. Then for $-\infty<\alpha<\alpha'<\infty$ and $\kappa',\kappa\in[0,\infty)$, $\mathcal C^{\alpha'}_{\kappa'}(K)$ is compactly embedded in $\mathcal C^{\alpha}_{\kappa}(K)$.
\end{lemma}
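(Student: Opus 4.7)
The plan is to reduce the weighted statement to an unweighted one using compactness of $K$, and then extract a convergent subsequence via a wavelet diagonal argument combined with tail estimates.

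Since $K$ is compact and the weight $w_\gamma(z) = (1+\|z\|)^\gamma$ is continuous and bounded above and below by positive constants on $K$, for any distribution $f$ with $\supp f \subset K$ the weighted norm $\|f\|_{\CC^\beta_\gamma}$ is equivalent to the unweighted $\|f\|_{\CC^\beta}$, with implicit constants depending only on $K$ and $\gamma$. Hence the weights $\kappa, \kappa'$ are irrelevant to the compactness statement, and it suffices to prove that $\CC^{\alpha'}(K) \hookrightarrow \CC^{\alpha}(K)$ is compact, viewed as spaces of distributions supported in $K$.

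Now take a bounded sequence $(f_n)$ in $\CC^{\alpha'}(K)$, say $\|f_n\|_{\CC^{\alpha'}} \le M$, and let $\{\phi_y : y \in \Lambda_0\} \cup \{\psi_y^k : k \ge 0, \, y \in \Lambda_k, \, \psi \in \Psi\}$ be the compactly supported wavelet basis of class $\CC^r$ (with $r > |\alpha| \vee |\alpha'|$) invoked in the proof of Lemma~\ref{lem:rem-on-Calp}. The Besov characterization states that $\|f\|_{\CC^\beta}$ is comparable to $\sup_{y \in \Lambda_0} |f(\phi_y)| + \sup_{k \ge 0} 2^{k(d+2)/2 + k\beta} \sup_{y, \psi} |f(\psi_y^k)|$ for any $\beta$ with $|\beta| < r$ (see \cite[Sec.~3]{Hairer14}). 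Because each $f_n$ is supported in $K$ and the wavelets have compact support, only $y$ lying in a fixed compact enlargement $K'$ of $K$ produce nonzero pairings, so only countably many coefficients are relevant. A diagonal extraction then yields a subsequence $(f_{n_j})$ along which every coefficient converges. Passing to the limit in the uniform bound, these limits are the wavelet coefficients of a distribution $f$ with $\|f\|_{\CC^{\alpha'}} \le M$, and pairing against test functions supported in $K^c$ shows that $\supp f \subset K$.

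To upgrade to convergence in $\CC^\alpha$, set $g_j := f_{n_j} - f$ and test against a rescaled bump $\phi_z^\ell$ with $\ell \in (0,1)$. Expanding $\phi_z^\ell$ in wavelets and splitting at a cutoff scale $N$, the tail $k \ge N$ is dominated by the uniform bound $\|g_j\|_{\CC^{\alpha'}} \le 2M$, yielding a geometric contribution of order $M\, 2^{-N(\alpha'-\alpha)} \ell^\alpha$ which can be made uniformly small in $j$ since $\alpha' > \alpha$. The head $k < N$ involves only finitely many wavelet coefficients (by the support localization in $K'$), each of which tends to $0$ by the diagonal construction. Dividing by $\ell^\alpha$ and taking suprema over $z$, $\phi$, and $\ell$ gives $\|g_j\|_{\CC^\alpha} \to 0$. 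The main technical point will be the bookkeeping to ensure the wavelet characterization runs uniformly across both signs of the regularity indices and that the support of the limit $f$ lies in $K$; both are handled by choosing $r$ sufficiently large at the outset and exploiting the compact support of the wavelets.
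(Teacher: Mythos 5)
Your reduction to the unweighted case via boundedness of the weight on $K$ is exactly the paper's first step, but from there the two arguments diverge. The paper identifies $\mathcal C^{\alpha}(K)$ with the Besov space $\mathcal B^{\alpha}_{\infty,\infty}(K)$ and cites \cite[Corollary~2.96]{MR2768550} for the compactness of the embedding, whereas you give a self-contained proof via a wavelet-coefficient diagonal argument combined with a uniform tail estimate. Your route is more elementary in the sense that it does not import a compactness theorem from the literature, and it is arguably more in the spirit of the wavelet machinery the paper already sets up in Lemma~\ref{lem:rem-on-Calp}; the paper's route is shorter and avoids redoing what a standard reference already proves.

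One point worth tightening: your final step tests $g_j = f_{n_j}-f$ against rescaled bumps $\phi_z^\ell$ and divides by $\ell^\alpha$, which is the characterization of $\mathcal C^\alpha$ valid only for $\alpha<0$; for $\alpha>0$ the definition \eqref{e:def-Ckappa-alphaU} is a Hölder-type norm and the division by $\ell^\alpha$ (now $\to 0$ as $\ell\downarrow 0$) would destroy the estimate. The fix is exactly what you foreshadow at the end: work throughout with the wavelet sequence-norm characterization $\|g\|_{\mathcal C^\alpha}\asymp \sup_y|g(\phi_y)| + \sup_{k,y,\psi} 2^{k(d+2)/2+k\alpha}|g(\psi^k_y)|$, which holds uniformly for $|\alpha|<r$. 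Then the tail $k\ge N$ is controlled by $2M\,2^{-N(\alpha'-\alpha)}$ directly from the uniform $\mathcal C^{\alpha'}$-bound, the head $k<N$ is a finite collection of coefficients each converging to zero by the diagonal extraction, and no case split on the sign of $\alpha$ is needed. With that substitution your argument is complete and correct for all $\alpha<\alpha'$.
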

\begin{proof}  Note that the weighted H\"older space $\mathcal C^{\alpha}_{\kappa}(K)$ coincides with the corresponding H\"older space $\mathcal C^{\alpha}(K)$ and hence is equivalent to the Besov space $\mathcal B^{\alpha}_{\infty,\infty}(K)$. The desired result is a direct consequence of \cite[Corollary 2.96]{MR2768550} which claims that $\mathcal B^{\alpha'}_{p,\infty}(K)$ is compactly embedded in $\mathcal B^{\alpha}_{p,1}(K)$ for $p\in[1,\infty]$, noting that $\mathcal B^{\alpha}_{p,1}(K)$ is continuously embedded in $\mathcal B^{\alpha}_{p,\infty}(K)$ (see, e.g., \cite[Remark  2.12]{MR3724565}).
\end{proof}

\begin{lemma}\label{lem:compact-embedding}
For $-\infty<\alpha<\alpha'<\infty$ and $0\le\kappa'<\kappa$, $\mathcal C^{\alpha'}_{\kappa'}$ is compactly embedded in $\mathcal C^{\alpha}_{\kappa}$.
\end{lemma}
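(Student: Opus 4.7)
The plan is to reduce the global compact embedding to the local compact embedding on balls (Lemma \ref{lem:com-emb}) by a standard diagonal/truncation argument, using the fact that the weight ratio $w_{\kappa'}/w_{\kappa}$ decays at infinity to control the tail.

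Take a bounded sequence $(f_n)_{n\geq 1}$ in $\mathcal C^{\alpha'}_{\kappa'}$; I want to extract a subsequence converging in $\mathcal C^{\alpha}_{\kappa}$. For each $N\in\mathbb N$, fix a smooth cutoff $\chi_N$ with $\chi_N\equiv 1$ on the ball $B_N=\{\|z\|\le N\}$ and $\supp\chi_N\subset B_{N+1}$. Standard product estimates in weighted Besov--H\"older spaces (or a direct verification from the definitions in Section~\ref{S:DBP}) show that multiplication by $\chi_N$ is continuous from $\mathcal C^{\alpha'}_{\kappa'}$ into $\mathcal C^{\alpha'}_{\kappa'}(B_{N+1})$, so $(\chi_N f_n)_n$ is bounded therein. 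Lemma~\ref{lem:com-emb} then yields a subsequence along which $\chi_N f_n$ converges in $\mathcal C^{\alpha}(B_{N+1})$. A diagonal extraction produces a single subsequence $(f_{n_k})_k$ such that $\chi_N f_{n_k}$ converges in $\mathcal C^{\alpha}(B_{N+1})$ for every $N$; the limits are consistent (they agree on overlaps) and thus define a distribution $f$, which lies in $\mathcal C^{\alpha'}_{\kappa'}$ by a weak-limit argument using uniform boundedness of $(f_{n_k})$.

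It remains to show $\|f_{n_k}-f\|_{\mathcal C^{\alpha}_{\kappa}}\to 0$. Split the supremum in the definition of $\|\cdot\|_{\mathcal C^{\alpha}_{\kappa}}$ according to whether the base-point $z$ lies in $B_N$ or in its complement. For $\|z\|>N$, I exploit the weight: any test function $\phi_z^{\Ell}$ with $\Ell\in(0,1)$ is supported in $B(z,\Ell)\subset B(z,1)$, and on this support $w_{\kappa'}$ and $w_\kappa$ are comparable up to the constant of \eqref{e:weight-prop}. Combined with the elementary inequality $\|g\|_{\mathcal C^{\alpha}_{\kappa'}}\lesssim \|g\|_{\mathcal C^{\alpha'}_{\kappa'}}$ (for $\Ell<1$ and $\alpha<\alpha'$, one has $\Ell^{\alpha'}\le \Ell^{\alpha}$ when both indices are negative; the mixed and positive cases follow analogously from the definitions), this yields
\begin{equation*}
\sup_{\|z\|>N,\,\Ell,\,\phi}\frac{|(f_{n_k}-f)(\phi_z^{\Ell})|}{w_\kappa(z)\Ell^{\alpha}}
\;\le\; C\,\sup_{\|z\|>N}\frac{w_{\kappa'}(z)}{w_\kappa(z)}
\;\le\; C\,(1+N)^{\kappa'-\kappa}\,,
\end{equation*}
which tends to $0$ as $N\to\infty$ since $\kappa'<\kappa$. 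For $\|z\|\le N$ and $\Ell\in(0,1)$, the test function $\phi_z^{\Ell}$ is supported in $B_{N+1}$, so $(f_{n_k}-f)(\phi_z^{\Ell})=\bigl(\chi_{N+1}(f_{n_k}-f)\bigr)(\phi_z^{\Ell})$, and this is bounded by $\|\chi_{N+1}(f_{n_k}-f)\|_{\mathcal C^{\alpha}(B_{N+1})}\cdot\Ell^{\alpha}$ times a constant; the diagonal convergence makes this vanish as $k\to\infty$ for each fixed $N$. Choosing $N$ large first and then $k$ large gives $\|f_{n_k}-f\|_{\mathcal C^{\alpha}_{\kappa}}\to 0$.

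The only genuinely delicate point is the tail control: one has to make sure that the action of a rescaled test function $\phi_z^{\Ell}$ based at a far-away point $z$ can be bounded by the weight $w_{\kappa'}(z)$ rather than $w_{\kappa'}$ evaluated at some point in the support. The support localization $\Ell\le 1$ together with \eqref{e:weight-prop} makes this routine, and this is precisely where the strict inequality $\kappa'<\kappa$ is used. All remaining steps are standard compactness/diagonal arguments.
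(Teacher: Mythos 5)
Your proof is correct and takes essentially the same approach as the paper's: local compactness via Lemma~\ref{lem:com-emb} on an exhausting sequence of balls, a diagonal extraction, and tail control from the weight ratio $w_{\kappa'}/w_{\kappa}\to 0$. The only cosmetic difference is that you localize by multiplying with smooth cutoffs $\chi_N$ (which requires a product estimate in the weighted space), whereas the paper localizes by taking restrictions $f_n|_{U_m}$, noting directly that $\|f|_U\|_{\mathcal C^\alpha_\kappa(U)}\le\|f\|_{\mathcal C^\alpha_\kappa}$ and that any $\phi_z^{\Ell}$ with $\Ell<1$ is supported in either $U_{m+1}$ or $\overline{U_m}^c$, thereby avoiding the multiplication lemma altogether.
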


\begin{proof}
Consider a sequence $\{f_n, n\in \mathbb N\}$ with
  \begin{equ}[e:C-alpha'-M]
  \sup_{n}\|f_n\|_{\mathcal C^{\alpha'}_{\kappa'}}\le M<\infty.
  \end{equ}
To get the desired result, it suffices to show that there exists a subsequence of $\{f_n, n\in \mathbb N\}$ which converges in $\mathcal C^{\alpha}_{\kappa}$.

Consider a sequence of bounded open subsets $U_m=B(0,2m)\subset \R^{1+d}, m\in \mathbb N$. Clearly $U_m\subset U_{m+1}$ and $\bigcup_{m=1}^\infty U_m=\R^{1+d}$. Then by Lemma~\ref{lem:com-emb}, for each $U_m$, $\mathcal C^{\alpha'}_{\kappa'}(\overline{U_m})$ is compactly embedded in $\mathcal C^{\alpha}_{\kappa}(\overline{U_m})$. Since for all $n\in\N$, $f_n |_{U_m} \in \mathcal C^{\alpha'}_{\kappa'}(\overline{U_{m}})$ with
$\| f_n |_{U_m}\|_{\mathcal C^{\alpha'}_{\kappa'}(\overline{U_{m}})} \leq \| f_n\|_{\mathcal C^{\alpha'}_{\kappa'}}\leq M$, we can find a subsequence $\{f_{n_k^{(1)}}\}$ such that  $ f_{n_k^{(1)}}\big|_{U_1}  $ converges in $\mathcal C^{\alpha}_{\kappa}(\overline{U_1})$, and inductively, we can find a subsequence $\{f_{n_k^{(m+1)}}\}$ of $\{f_{n_k^{(m)}}\}$ such that $ f_{n_k^{(m+1)}} \big|_{U_{m+1}} $ converges in $\mathcal C^{\alpha}_{\kappa}(\overline{U_{m+1}})$. Thus, the diagonal sequence $ f_{n_k^{(k)}}\big|_{U_m}  $ converges as $k\to\infty$ in $\mathcal C^{\alpha}_{\kappa}(\overline{U_{m}})$ for every $m\in\mathbb N$. Assume that $ f_{n_k^{(k)}}\big|_{U_m} \to f_0^{(m)}$ in $\mathcal C^{\alpha}_{\kappa}(\overline{U_m})$ as $k\to\infty$.
Then clearly $f_0^{(m+1)}\big|_{U_m} =f_0^{(m)}$. Letting $f_0=\lim_{m\to\infty} f_0^{(m)}$, in the sense that for all $m\in \N$, $\langle f_0, \varphi\rangle=\langle f_0^{(m)}, \varphi\rangle$ for all $\varphi\in \CC_c^\infty(U_m)$,  we have that
\begin{equation}\label{e:bund-m}
\lim_{k\to\infty}\Big\|  \left(f_{n_k^{(k)}}-f_0\right) \Big|_{U_m}\Big\|_{\mathcal C^{\alpha}_{\kappa}(U_{m})}=0, \quad \forall\ m\in\N.
\end{equation}
Thus, $f_{n_k^{(k)}}$ is a Cauchy sequence in $\mathcal C^{\alpha}_{\kappa}(U_{m})$ for any $m \in \mathbb N$. In what follows, we will use the convention $\|f\|_{\CC_\kappa^\alpha(U)}:=\|f|_U\|_{\CC_\kappa^\alpha(U)}$ for any open set $U$.

Now for any fixed $\e>0$, one can find $m_0$ such that,
\[
\frac{(1+|x|)^{\kappa'}}{(1+|x|)^\kappa}< \frac{\e}{4M}, ~~\forall x\in U_{m_0}^c,
\]
and hence by \eqref{e:C-alpha'-M}
 \[\left\|f_{n_k^{(k)}}\right\|_{\mathcal C^{\alpha}_{\kappa}(\overline U_{m_0}^c)}< \frac{\e}{4M}M=\frac\e{4}.\]
Noting that $f_{n_k^{(k)}}\big|_{U_{m_0+1}}$ is a Cauchy sequence in $\mathcal C^{\alpha}_{\kappa}(U_{m_0+1})$,  one can find $k_0\in\mathbb N$ such that if $k_1, k_2>k_0$,
\[
\Big\|f_{n_{k_1}^{(k_1)}}-f_{n_{k_2}^{(k_2)}}\Big\|_{\mathcal C^{\alpha}_{\kappa}(U_{m_0+1})}<\frac\e{2},
\]
and hence
\begin{align*}
\Big\|f_{n_{k_1}^{(k_1)}}-f_{n_{k_2}^{(k_2)}}\Big\|_{\mathcal C^{\alpha}_{\kappa}}
&\le \Big\|f_{n_{k_1}^{(k_1)}}-f_{n_{k_2}^{(k_2)}}\Big\|_{\mathcal C^{\alpha}_{\kappa}(U_{m_0+1})}
+\Big\|f_{n_{k_1}^{(k_1)}}-f_{n_{k_2}^{(k_2)}}\Big\|_{\mathcal C^{\alpha}_{\kappa}(\overline{U_{m_0}}^c)}\\
&\le \frac\e{2}+\frac\e{4} \times 2 =\e,
\end{align*}
where the first step follows from \eqref{e:def-CC-alpha} and the observation that for any $\phi_z^{\Ell}$ with $\Ell\in(0,1)$, its support   $\supp (\phi_z^{\Ell})\subset B(z,1)$ must be a subset of either $U_{m+1}$ or $\overline{U_m}^c$. This concludes the proof.
\end{proof}

For any given $r > 0$, recall that one can identify a finite family of functions $\Psi=\{(\psi^{(i)})_{1 \le i < 2^{d+2}}\}$ and $\phi \in \CC^r$
(same as in the proof of Lemma \ref{lem:rem-on-Calp}) such that the following holds (with $r>\max\{|\alpha|,|\alpha'|\}$ in the next Lemma).

\begin{lemma}\label{lem:Lp-bound}
Fix $\alpha, \alpha'$, $\gamma$ and $\kappa$, such that $\alpha<\alpha'$, $\max\{|\alpha|,|\alpha'|\}<\gamma$ and $\kappa>0$. Let $(f_\e)_{\e>0}$ be a family of random linear forms on $\mathcal C_c^\gamma(\R\times \R^{d})$. Assume that there exist constants $C < \infty$ and $p>\max\{\frac{d+2}{\alpha'-\alpha}, \frac{d+2}\kappa\}$ such that for every $\e>0$, the following two statements hold:
	\begin{equ}
		\label{e:tight1}
		\sup_{z\in \R^{d+1}}
		\E \Big[ \Big| \int_{ \R^{d+1}} f_\e(w)\phi(w - z) dw \Big|^p  \Big]\le C \; ;
	\end{equ}
	and, for every $i \in \{1,\ldots, 2^{d+2}-1\}$ and $m \in \N$,
	\begin{equ}
		\label{e:tight2}
		\sup_{(t,x) \in\R^{1+d}}
		 \E \Big[ \Big| \int_{\R^{d+1}}  \!\!\!\!\! f_\e(s,y)\psi^{(i)}(2^{2m}(s - t), 2^m(y - x)) dyds  \Big|^p  \Big]
		\le C \, 2^{-mp(d+2 +\alpha')},
	\end{equ}
	where we use the notation $z=(t,x)\in\R^{1+d}$.
	Then
\[\sup_{\e>0}\E\left[\left(\|f_\e\|_{\CC^{\alpha}_{\kappa}}\right)^p\right]<\infty.\]
\end{lemma}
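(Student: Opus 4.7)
The plan is to reduce the weighted Besov--H\"older norm of $f_\e$ to a countable supremum of wavelet coefficients, trade that supremum for an $\ell^p$--sum, and then apply the two hypothesized moment bounds coefficient by coefficient. First, I would invoke a wavelet characterisation of $\CC^\alpha_\kappa$, adapting \cite[Prop.~3.20]{Hairer14} (already used in the proof of Lemma~\ref{lem:rem-on-Calp}) to the weighted setting using the local stability \eqref{e:weight-prop} of $w_\kappa$. With the wavelet basis of regularity $r>\max\{|\alpha|,|\alpha'|\}$ given by $\{\phi(\cdot-z): z\in\Lambda_0\}\cup\{\psi^{(i),n}_z: n\ge 0,\,z\in\Lambda_n,\,1\le i<2^{d+2}\}$, where $\Lambda_n=(2^{-2n}\Z)\times(2^{-n}\Z)^d$ and $\psi^{(i),n}_z$ denotes the $L^2$--normalised rescaled wavelet at scale $2^{-n}$ centred at $z$, this characterisation yields
\begin{equation*}
\|f\|_{\CC^\alpha_\kappa}\le C \sup_{z\in\Lambda_0}\frac{|\langle f,\phi(\cdot-z)\rangle|}{w_\kappa(z)} + C \sup_{n,z,i}\frac{|\langle f,\psi^{(i),n}_z\rangle|}{w_\kappa(z)\,2^{-n(d+2)/2-n\alpha}}.
\end{equation*}

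Next, using the elementary bound $(\sup_a |X_a|)^p\le \sum_a |X_a|^p$ and taking expectation, the problem reduces to bounding
\begin{equation*}
\sum_{z\in\Lambda_0}\frac{\E[|\langle f_\e,\phi(\cdot-z)\rangle|^p]}{w_\kappa(z)^p}+\sum_{n\ge 0}\sum_{z\in\Lambda_n}\sum_i\frac{\E[|\langle f_\e,\psi^{(i),n}_z\rangle|^p]}{w_\kappa(z)^p\,2^{-np(d+2)/2-np\alpha}}
\end{equation*}
uniformly in $\e>0$. Rewriting $\langle f_\e,\psi^{(i),n}_z\rangle = 2^{n(d+2)/2}\int f_\e(s,y)\psi^{(i)}(2^{2n}(s-z_0),2^n(y-z_{1:d}))\,ds\,dy$, the hypothesis \eqref{e:tight1} provides $\E[|\langle f_\e,\phi(\cdot-z)\rangle|^p]\le C$ and \eqref{e:tight2} yields $\E[|\langle f_\e,\psi^{(i),n}_z\rangle|^p]\le C\,2^{-np(d+2)/2-np\alpha'}$, both uniform in $\e,z,n,i$.

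Substituting, the two sums reduce to $\sum_{z\in\Lambda_0} w_\kappa(z)^{-p}$ and $\sum_{n\ge 0} 2^{-np(\alpha'-\alpha)}\sum_{z\in\Lambda_n} w_\kappa(z)^{-p}$. A Riemann-sum comparison with $\int_{\R^{d+1}}(1+\|z\|)^{-p\kappa}\,dz$ (using the parabolic substitution $z_0=r^2$ to integrate) shows this integral converges precisely when $p\kappa>d+2$, and that $\sum_{z\in\Lambda_n} w_\kappa(z)^{-p}$ is of order $2^{n(d+2)}$ (the lattice density of $\Lambda_n$ per unit volume). The remaining geometric sum $\sum_{n\ge 0} 2^{n(d+2)-np(\alpha'-\alpha)}$ converges precisely when $p(\alpha'-\alpha)>d+2$. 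Both summability conditions match exactly the two quantitative hypotheses on $p$.

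The main obstacle I expect is justifying the wavelet characterisation in the weighted setting in the first step, since the $L^2$--based wavelet decomposition interacts with the polynomial weight $w_\kappa$ in a way that requires some care. However, \eqref{e:weight-prop} guarantees that $w_\kappa$ is essentially constant on each unit ball, which is enough to transfer the unweighted characterisation to the weighted one by a standard Littlewood--Paley-type argument; alternatively one can extract what is needed directly from the forward direction already exploited in Lemma~\ref{lem:rem-on-Calp}. The remaining steps -- the sup-to-$\ell^p$ trick and the Riemann-sum estimates -- are routine and only require the stated thresholds on $p$.
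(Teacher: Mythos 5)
Your proposal matches the paper's argument in essentially every step: reduce the $\CC^\alpha_\kappa$ norm to a wavelet supremum (the paper cites \cite[Prop.~2.4]{MR3358965} rather than adapting \cite[Prop.~3.20]{Hairer14}, but this is the same characterization), convert the $p$-th moment of the supremum to an $\ell^p$ sum, invoke the two hypotheses, and use the lattice density $2^{n(d+2)}$ plus the parabolic volume growth to reduce to exactly the two geometric summability conditions $p(\alpha'-\alpha)>d+2$ and $p\kappa>d+2$. The one imprecision is the suggestion that the reverse wavelet bound could be "extracted from the forward direction in Lemma~\ref{lem:rem-on-Calp}" — those are logically distinct directions — but your primary route via the weighted adaptation is the right one, so this has no bearing on the correctness of the argument.
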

\begin{proof}
To prove the desired result, by \cite[Proposition~2.4]{MR3358965},
 it suffices to show
\begin{align*}
 \E \left[  \sup_{z\in \Lambda_0} \Big| \int_{ \R^{d+1}} f_\e(w)\phi(w - z) dw \Big|^p \Big/ w_\ka(z)^p \right]
\end{align*}
and
\begin{align*}
\E \left[\sup_{m\ge0}\sup_{z\in \Lambda_m}  2^{mp(d+2+\alpha)}\Big| \int_{\R^{d+1}}  \!\!\!\!\! f_\e(s,y)\psi^{(i)}(2^{2m}(s - t), 2^m(y - x))dyds   \Big|^p \Big/ w_\ka(z)^p \right]
\end{align*}
are  bounded  uniformly in $\e>0$ and $i\in \{1, \dots, 2^{d+2}-1\}$, where $\Lambda_m=(2^{-2m}\Z)\times (2^{-m}\Z)^d$.
We will show the uniform boundedness of the second term, and the boundedness of the first one is similar (and easier) and thus omitted.
Indeed,
\begin{align*}
&\E \left[\sup_{m\ge0}\sup_{z\in \Lambda_m}
 2^{mp(d+2+\alpha)}
 \Big| \int_{\R^{d+1}}  \!\!\!\!\! f_\e(s,y)\psi^{(i)}(2^{2m}(s - t), 2^m(y - x)) dyds   \Big|^p
 \Big/ w_\ka(z)^p \right]\\
& \lesssim
\sum_{m\ge0}\sum_{z\in \Lambda_m}2^{mp(d+2+\alpha)}
\E \left[  \Big| \int_{\R^{d+1}}  \!\!\!\!\! f_\e(s,y)\psi^{(i)}(2^{2m}(s - t), 2^m(y - x)) dyds   \Big|^p\right]
\Big/w_\ka(z)^p
\\
&\lesssim\sum_{m\ge0}\sum_{z\in \Z^{d+1}}2^{mp(d+2+\alpha)}2^{-mp(d+2+\alpha')} 2^{m(d+2)}
\Big/w_\ka(z)^p
\quad \lesssim 1,
\end{align*}
where the second inequality follows from the fact  that there are at most order $2^{m(d+2)}$ many elements in the restriction of $\Lambda_m$ to the unit ball of $\R^{d+1}$ and  \eqref{e:tight2}, the third step follows from the property \eqref{e:weight-prop} of the weight function, and finally the series is summable due to the assumption $p>\max\{\frac{d+2}{\alpha'-\alpha}, \frac{d+2}\kappa\}$, where $p>\frac{d+2}{\alpha'-\alpha}$ yields $\sum_{m\ge 0} 2^{m(d+2-p(\alpha'-\alpha))}<\infty$ and $p>\frac{d+2}{\kappa}$ implies $\sum_{z\in\R^{d+1}} w_\kappa(z)^{-p}\lesssim\sum_{z\in\R^{d+1}} \frac{1}{1+\|z\|^{\kappa p}}<\infty$.
\end{proof}

We are now ready to state and prove the tightness criterion.
\begin{proposition}\label{prop:tightness-cri}
Fix $\alpha, \alpha'$, $\gamma$, $\kappa$ and $\kappa'$, such that $\alpha<\alpha'$, $\max\{|\alpha|,|\alpha'|\}<\gamma$ and $0<\kappa'<\kappa$. Let $(f_\e)_{\e>0}$ be a family of random linear forms on $\mathcal C_c^\gamma(\R\times \R^{d})$ satisfying \eqref{e:tight1} and \eqref{e:tight2} with $\kappa$ being replaced by $\kappa'$ therein and $p>\max\{\frac{d+2}{\alpha'-\alpha}, \frac{d+2}{\kappa'}\}$. Then $\{f_\e\}_{\e>0}$ is a tight family in $\CC_{\kappa}^\alpha$.
\end{proposition}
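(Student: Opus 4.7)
The plan is to deduce tightness from a uniform $L^p$ bound on $(f_\e)$ in a slightly stronger H\"older space that embeds compactly into $\CC^\alpha_\kappa$, via the compact embedding (Lemma \ref{lem:compact-embedding}) together with Markov's inequality.

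First, I would choose an intermediate regularity index $\bar\alpha$ with $\alpha<\bar\alpha<\alpha'$. Since by assumption $p>\frac{d+2}{\alpha'-\alpha}$, we have $\alpha'-\frac{d+2}{p}>\alpha$, so one can fix any
\begin{equation*}
\bar\alpha \in \Big(\alpha,\, \alpha'-\tfrac{d+2}{p}\Big),
\end{equation*}
which in particular satisfies $p>\frac{d+2}{\alpha'-\bar\alpha}$. Combined with the standing hypothesis $p>\frac{d+2}{\kappa'}$, the quadruple $(\bar\alpha,\alpha',\kappa',p)$ satisfies the hypotheses of Lemma \ref{lem:Lp-bound} (note that the assumptions \eqref{e:tight1}, \eqref{e:tight2} hold with $\kappa'$ in place of $\kappa$ by hypothesis, and they are independent of the lower regularity index $\bar\alpha$). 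Applying that lemma therefore yields
\begin{equation*}
\sup_{\e>0} \E\!\left[\|f_\e\|_{\CC^{\bar\alpha}_{\kappa'}}^{p}\right] \le C < \infty.
\end{equation*}

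Next I would translate this moment bound into tightness. By Markov's inequality,
\begin{equation*}
\PP\!\left(\|f_\e\|_{\CC^{\bar\alpha}_{\kappa'}} > M\right) \le \frac{C}{M^{p}} \qquad \forall\, M>0,\ \forall\, \e>0.
\end{equation*}
The key input is that Lemma \ref{lem:compact-embedding} (applied with the ordering $\alpha<\bar\alpha$ and $\kappa'<\kappa$) shows that the closed ball
\begin{equation*}
B_M := \{ g \in \CC^{\bar\alpha}_{\kappa'} \,:\, \|g\|_{\CC^{\bar\alpha}_{\kappa'}} \le M\}
\end{equation*}
is relatively compact in $\CC^{\alpha}_\kappa$. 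Denote its closure in $\CC^\alpha_\kappa$ by $K_M$, which is then a compact subset of $\CC^\alpha_\kappa$. The Markov estimate gives $\PP(f_\e \notin K_M) \le C M^{-p}$ uniformly in $\e$, and this vanishes as $M\to\infty$, establishing tightness of $(f_\e)_{\e>0}$ in $\CC^\alpha_\kappa$.

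I do not anticipate any substantial obstacle: once the right intermediate index $\bar\alpha$ is identified so that Lemma \ref{lem:Lp-bound} is applicable, the rest is a routine combination of Markov's inequality and the compact embedding of Lemma \ref{lem:compact-embedding}. The only mild subtlety is the bookkeeping of the strict inequalities $\alpha<\bar\alpha<\alpha'-\frac{d+2}{p}$ and $\kappa'<\kappa$, both of which are guaranteed by the hypotheses imposed in the statement.
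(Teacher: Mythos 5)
Your proof is correct and follows the same route as the paper: apply Lemma \ref{lem:Lp-bound} to obtain a uniform $L^p$ moment bound in a H\"older space of better regularity and lighter weight, then combine Markov's inequality with the compact embedding Lemma \ref{lem:compact-embedding}. Your introduction of the intermediate index $\bar\alpha\in(\alpha,\alpha'-\tfrac{d+2}{p})$ is a mild but worthwhile clarification: the paper writes the moment bound directly for $\|f_\e\|_{\CC^{\alpha'}_{\kappa'}}$, whereas Lemma \ref{lem:Lp-bound} as stated produces a bound at a regularity index strictly below $\alpha'$, so your bookkeeping makes the application of that lemma unambiguous.
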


\begin{remark} In the setting with usual Euclidean (not parabolic) distance, such a tightness criterion was obtained for {\em local Besov-H\"older space} in \cite[Theorem~1.1]{MR3724565}.
\end{remark}

\begin{proof}
By Lemma \ref{lem:Lp-bound}, we have
\[\sup_{\e}\E \left[\left(\|f_\e\|_{\mathcal C_{\kappa'}^{\alpha'}}\right)^p\right] \le C_0\]
for some  constant $C_0\in(0, \infty)$.
Hence, Markov's inequality yields that, for all $\e>0$,
\begin{align*}
\P\left(\|f_\e\|_{\mathcal C_{\kappa'}^{\alpha'}}>M\right)&\le M^{-p} \E \left[\left(\|f_\e\|_{\mathcal C_{\kappa'}^{\alpha'}}\right)^p\right]\\
&\le M^{-p} C_0.
\end{align*}
Thus, for any $\delta>0$, one can find a constant $M_\delta$ sufficiently large such that
\[\inf_{\e} \P\left(\|f_\e\|_{\mathcal C_{\kappa'}^{\alpha'}}\le M_\delta\right)=1-\sup_{\e} \P\left(\|f_\e\|_{\mathcal C_{\kappa'}^{\alpha'}}> M_\delta\right)\ge 1-\delta.\]
Therefore, noting that
by Lemma~\ref{lem:compact-embedding} the set $\left\{f\in \mathcal C_{\kappa}^\alpha: \|f\|_{\mathcal C_{\kappa'}^{\alpha'}}\le M\right\}$ is compact in $\mathcal C_{\kappa}^\alpha$  for any $M\in(0,\infty)$, the family
$(f_\e)_{\e>0}$ is tight in $\mathcal C_{\kappa}^\alpha$.
\end{proof}

 \section{Heat kernel estimates}\label{sec:HK}

In this section, we prove local limit theorem type of results for the random walk transition kernel and its space-time gradients, which is used in Section \ref{S:conv}. We will formulate our results in general dimensions since the proof is the same.

Let $P_t(x)$, $x\in \Z^d$, denote the transition kernel of a rate $1$ simple symmetric random walk $S$ on $\Z^d$, and let $p_t(x)$ denote the standard heat kernel with time slowed down by a factor of $d$. Then $P_t$ and $p_t$ have Fourier transforms
  \begin{equation}\label{pPhi}
\Phi_t(\theta) = {\bold E}[e^{i \langle \theta, S_t\rangle}] = e^{-\frac{t}{d}\sum_{i=1}^d(1-\cos \theta_i)}, ~~\theta \in [-\pi, \pi]^d ~ \text{ and } ~ \phi_t(\theta) = e^{-\frac{t\|\theta\|^2}{2d}}, ~~ \theta \in \R^d,
\end{equation}
where $\|\theta\|$ denotes the Euclidean norm, and
\begin{equation}\label{eq:Ptext}
\begin{aligned}
P_t(x) & = \frac{1}{(2\pi)^d} \int_{[-\pi, \pi]^d} e^{-i\langle \theta, x\rangle} \Phi_t(\theta) {   d}\theta, \qquad  x\in\Z^d,  \\
p_t(x) & = \frac{1}{(2\pi)^d} \int_{\R^d} e^{-i\langle\theta, x\rangle} \phi_t(\theta) {   d}\theta, \qquad x\in \R^d.
\end{aligned}
\end{equation}

Recall that for $k=(k_0, k_1, \ldots, k_d)$ with $k_0, \ldots, k_d \in \N_0:=\N\cup\{0\}$ and $|k|:=2k_0+\sum_{i=1}^d k_i$, and for $f:\R\times \R^d\to \R$,
\begin{equation}
D^k_\epsilon f:= \partial^{k_0}_t \nabla_{d, \epsilon}^{k_d}\cdots \nabla_{1, \epsilon}^{k_1} f(t,x), \quad \mbox{where } \nabla_{i,\epsilon} g(x) = \epsilon^{-1}(g(x+\epsilon e_i)-g(x)),
\end{equation}
with $e_i$ being the $i$-th unit vector in $\R^d$. We will also denote
$$
D^k f:= \partial^{k_0}_t \partial_{x_d}^{k_d} \cdots \partial_{x_1}^{k_1} f(t,x).
$$
We then have the following result:

\begin{lemma}\label{L:grad}
For any $k=(k_0, k_1, \ldots, k_d)$ with $k_0, k_1, \ldots, k_d\in \N_0:=\N\cup\{0\}$, we have
\begin{equation}\label{eq:gradbd}
\big|D_1^k P_t(x) - D^k p_t(x)\big| \leq  \frac{o(1)}{(\sqrt{t}+\|x\|)^{|k|+d}},
\end{equation}
where $o(1)\to 0$ as $\sqrt{t}+\|x\|\to\infty$.
\end{lemma}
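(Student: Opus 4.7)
The proof is via Fourier analysis on \eqref{eq:Ptext}. Since the discrete difference $\nabla_{j,1}$ and the continuous derivative $\partial_{x_j}$ act after Fourier transform by multiplication with $(e^{-i\theta_j}-1)$ and $(-i\theta_j)$ respectively, while $\partial_t$ acts on $\Phi_t$ and $\phi_t$ as multiplication by their log-derivatives in $t$, one has
\begin{equation*}
D_1^k P_t(x) - D^k p_t(x) = \frac{1}{(2\pi)^d}\int_{\R^d} e^{-i\langle\theta,x\rangle} G_k(t,\theta)\,d\theta,
\end{equation*}
where $G_k(t,\theta):=\mathbf{1}_{[-\pi,\pi]^d}(\theta)\widehat m_k(\theta)\Phi_t(\theta) - m_k(\theta)\phi_t(\theta)$, with Fourier symbols $\widehat m_k(\theta)=\bigl(-\tfrac{1}{d}\sum_j(1-\cos\theta_j)\bigr)^{k_0}\prod_j(e^{-i\theta_j}-1)^{k_j}$ and $m_k(\theta) = \bigl(-\tfrac{\|\theta\|^2}{2d}\bigr)^{k_0}\prod_j(-i\theta_j)^{k_j}$. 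The Taylor expansions $1-\cos\theta_j=\theta_j^2/2+O(\theta_j^4)$ and $e^{-i\theta_j}-1=-i\theta_j+O(\theta_j^2)$ yield, for $|\theta|\le c_0$ small,
$$|\widehat m_k(\theta) - m_k(\theta)| \lesssim |\theta|^{|k|+1},\qquad |\Phi_t(\theta)-\phi_t(\theta)| \lesssim t|\theta|^4 e^{-ct|\theta|^2},$$
and writing $\widehat m_k\Phi_t-m_k\phi_t = (\widehat m_k - m_k)\Phi_t + m_k(\Phi_t - \phi_t)$ gives
$|G_k(t,\theta)|\lesssim(|\theta|^{|k|+1}+t|\theta|^{|k|+4})\,e^{-ct|\theta|^2}$ on $\{|\theta|\le c_0\}$. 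For $|\theta|\in (c_0,\pi\sqrt d]$, $\Phi_t(\theta)$ is exponentially small in $t$; the tail $|\theta|>\pi\sqrt d$ involves only the continuous kernel whose Gaussian decay allows direct treatment.

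Set $r:=\sqrt t+\|x\|$. I will consider two regimes.

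In \emph{Regime 1} where $\|x\|\le \sqrt t$ (so $r\sim \sqrt t$), I substitute $\theta=\eta/\sqrt t$. On $|\eta|\le c_0\sqrt t$ one has $\Phi_t(\eta/\sqrt t)\le e^{-c\|\eta\|^2}$, and the Taylor bound becomes $|G_k(t,\eta/\sqrt t)|\lesssim t^{-(|k|+1)/2}(|\eta|^{|k|+1}+|\eta|^{|k|+4})\,e^{-c\|\eta\|^2}$. Combined with the volume factor $t^{-d/2}$ from $d\theta=t^{-d/2}d\eta$, this shows
$$\Big|\int e^{-i\langle\theta,x\rangle}G_k(t,\theta)\,d\theta\Big|\lesssim t^{-(d+|k|)/2-1/2} \;\sim\; \frac{1}{\sqrt t}\, r^{-(d+|k|)},$$
which is $o(1)\,r^{-(d+|k|)}$ as $r\to\infty$. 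In \emph{Regime 2} where $\|x\|>\sqrt t$ (so $r\sim \|x\|$), I pick $j_\ast$ with $|x_{j_\ast}|\ge \|x\|/\sqrt d$ and integrate by parts $N=d+|k|+1$ times in $\theta_{j_\ast}$:
$$\int e^{-i\langle\theta,x\rangle}G_k(t,\theta)\,d\theta = (ix_{j_\ast})^{-N}\int e^{-i\langle\theta,x\rangle}\partial_{\theta_{j_\ast}}^N G_k(t,\theta)\,d\theta,$$
using that the periodicity of $\Phi_t$ in $\theta_j$ with period $2\pi$ kills boundary contributions from $\partial[-\pi,\pi]^d$, and that on the complementary region $G_k=-m_k\phi_t$ decays faster than any power. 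Provided $\|\partial_{\theta_{j_\ast}}^N G_k(t,\cdot)\|_{L^1(\R^d)}$ is bounded uniformly in $t$, this yields a bound $\|x\|^{-N}\lesssim \|x\|^{-1}r^{-(d+|k|)}=o(1)\,r^{-(d+|k|)}$ as required.

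The main obstacle lies in that last uniform $L^1$-bound on $\partial_{\theta_{j_\ast}}^N G_k(t,\cdot)$. Each $\theta_{j_\ast}$-derivative falling on $\Phi_t$ produces a factor $\sim t\sin\theta_{j_\ast}/d$, so repeated differentiation brings down polynomial-in-$t$ growth through the chain rule. These are tamed by splitting the $\theta$-integration into the ``small-frequency'' window $|\theta|\le t^{-1/2}$ (where the accompanying monomials $|\theta|^m$ provide the compensating $t^{-m/2}$) and the ``large-frequency'' range $|\theta|>t^{-1/2}$ (where the Gaussian $e^{-ct|\theta|^2}$ provides super-polynomial decay). On each piece the Taylor bound for the low-frequency $G_k$ and the exponential bound for the high-frequency part combine to yield a uniform $L^1$ control. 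An analogous analysis on the $\R^d\setminus [-\pi,\pi]^d$ piece, using rapid Gaussian decay of $\phi_t$ and its derivatives, completes the proof.
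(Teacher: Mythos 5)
Your proof is correct in outline and takes a genuinely different route from the paper's, though both rely on Fourier analysis in the ``near'' regime. The paper first uses the discrete heat equation $\partial_t P_t = \tfrac{1}{2d}\sum_i\nabla_{i,1}^2T_iP_t$ to rewrite $D_1^kP_t(x)$ purely in terms of spatial finite differences of $P_t$ at shifted lattice points $x-z$, reducing \eqref{eq:gradbd} to comparing $\prod_i\nabla^{l_i}_{i,1}P_t(x-z)$ with $\prod_i\partial^{l_i}_{x_i}p_t(x)$; you instead keep the mixed time/space symbol $\widehat m_k$ and compare kernels directly, which is cleaner and avoids the extra shift (the paper has to remove it later by the mean value theorem). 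The paper splits at $\|x\|=t^\alpha$ with $\alpha=\tfrac{|k|+d+1}{2(|k|+d)}$ chosen so that the Fourier--Taylor error $O(t^{-(|k|+d+2)/2})$ beats $\|x\|^{-(|k|+d)}$, and in the far regime it gets the $o(1)$ factor from the explicit Gaussian/Hermite form of $D^kp_t$ together with an elementary large-deviation bound for $P_t$. You split at the simpler $\|x\|=\sqrt t$ and handle the far regime by integrating by parts $N=d+|k|+1$ times in the good direction $\theta_{j_*}$, using periodicity on $[-\pi,\pi]^d$ to kill boundary terms. The $N$ is chosen exactly so that the uniform $L^1$ bound $\|\partial^N_{\theta_{j_*}}G_k(t,\cdot)\|_{L^1}\lesssim1$ holds on scaling grounds (the compensating powers balance to $t^0$), and you correctly identify this as the delicate point: each derivative hitting $\Phi_t$ or $\phi_t$ brings a factor $\sim t\theta_{j_*}$, and one must carefully track the Leibniz expansion of $\partial^N((\widehat m_k-m_k)\Phi_t)$ and $\partial^N(m_k(\Phi_t-\phi_t))$ against the Gaussian to confirm the exponent is $\le 0$ in every term. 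This step is plausible but only sketched and would need to be written out to make the argument airtight; the paper's large-deviation route, while less elegant, sidesteps the iterated Leibniz/chain-rule bookkeeping. Both approaches give the same result.
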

\begin{proof} First we note that
$$
\partial_t P_t(x) = \frac{1}{2d}\sum_{i=1}^d\big(P_t(x+e_i)+P_t(x-e_i)-2P_t(x)\big) = \frac{1}{2d} \sum_{i=1}^d \nabla_{i,1}^2 P_t(x-e_i) =: \frac{1}{2d} \sum_{i=1}^d \nabla_{i,1}^2 T_iP_t(x),
$$
where $T_if(x) = f(x-e_i)$. Therefore
$$
D_1^k P_t(x) = \frac{1}{(2d)^{k_0}} \Big(\sum_{i=1}^d \nabla_{i,1}^2 T_i\Big)^{k_0} \prod_{i=1}^d \nabla_{i,1}^{k_i} P_t(x).
$$
Similarly, $\partial_t p_t(x) = \frac{1}{2d} \sum_{i=1}^d \partial^2_{x_i} p_t(x)$, and hence
$$
D^k p_t(x) = \frac{1}{(2d)^{k_0}} \big(\sum_{i=1}^d \partial^2_{x_i}\big)^{k_0} \prod_{i=1}^d \partial^{k_i}_{x_i} p_t(x).
$$
By expanding the products and compare $D_1^k P_t(x)$ with $D^k p_t(x)$ term by term, it suffices to show that for any
$l_1, \ldots, l_d\in \N\cup\{0\}$ with $\sum_{i=1}^d l_i = |k|$, and $z\in \Z^d$ with $|z|_1:=\sum_{i=1}^d |z_i|\leq |k|$, we have
\begin{equation} \label{eq:gradbd2}
\Big|\prod_{i=1}^d \nabla_{i, 1}^{l_i} P_t(x-z) - \prod_{i=1}^d\partial^{l_i}_{x_i} p_t(x)\Big| \leq  \frac{o(1)}{(\sqrt{t}+\|x\|)^{|k|+d}} \qquad \mbox{as } \sqrt{t}+\|x\|\to\infty.
\end{equation}

Let $\alpha=\alpha(|k|):=\frac{|k|+d+1}{2(|k|+d)}>\frac{1}{2}$. We first consider the case $\|x\|> t^\alpha$, which implies that $|x_m|\geq \|x\|/\sqrt{d} \geq t^\alpha/\sqrt{d}$ for some
$1\leq m\leq d$. Since $p_t(x) = \prod_{i=1}^d (2\pi t/d)^{-1/2}e^{-dx_i^2/2t}$, it is easily seen that
\begin{equation}\label{eq:ptbd}
\begin{aligned}
\Big|\prod_{i=1}^d \partial_{x_i}^{l_i} p_t(x)\Big| & = \prod_{i=1}^d \frac{e^{-\frac{d x_i^2}{2td^{-1}}} (|x_i|/{\sqrt{td^{-1}}})^{l_i+1}}{\sqrt{2\pi} |x_i|^{l_i+1}}  \Big|a_{l_i}\Big(\frac{x_i}{\sqrt{td^{-1}}}\Big)\Big| \\
& \leq \frac{C e^{-\frac{d x_m^2}{2td^{-1}}} (|x_m|/{\sqrt{td^{-1}}})^{l_m+1}}{\sqrt{2\pi} |x_m|^{l_m+1}}  \Big|a_{l_m}\Big(\frac{x_m}{\sqrt{td^{-1}}}\Big)\Big|  \prod_{i\neq m} \frac{1}{(\sqrt{t})^{l_i+1}}\\
& \leq \frac{C e^{-c|x_m|^{2-1/\alpha}}}{|x_m|^{|l_m|+1}}
= \frac{o(1)}{\|x\|^{|k|+d}},
\end{aligned}
\end{equation}
where $a_l$ is a polynomial of degree $l$,  the negative powers of $t$ can be neutralised by $e^{-c x_m^2/t}$ with any $c>0$, and $o(1)\to 0$ as $\sqrt{t}+\|x\|\to\infty$.

We can similarly bound
$$
\Big|\prod_{i=1}^d\nabla_{i, 1}^{l_i} P_t(x-z)\Big| \leq 2^{|k|} \max_{|\tilde z|_1\leq 2|k|} P_t(x+\tilde z) \leq C e^{-c\|x\|^{2-1/\alpha}} = \frac{o(1)}{\|x\|^{|k|+d}},
$$
where the bound on $P_t(x+\tilde z)$ for $\|x\|>t^\alpha$ and $|\tilde z|_1\leq 2|k|$ follows from an elementary large deviation bound for the simple symmetric random walk. Together with \eqref{eq:ptbd}, this implies \eqref{eq:gradbd2} when $\|x\|>t^\alpha$.

We now consider the case $\|x\|\leq t^\alpha $. We will use Fourier transform calculations. Alternatively, one can also use the Edgeworth expansion for $P_t$ (see e.g.~\cite[page 45]{Hal92}).  Note that if we extend the definition of $P_t(x)$ in \eqref{eq:Ptext} to all $x\in\R^d$, then we can write
$$
\begin{aligned}
\prod_{i=1}^d \nabla_{i, 1}^{l_i} P_t(x-z) & = \idotsint_{[0,1]^{|k|}} \prod_{i=1}^d \partial^{l_i}_{x_i} P_t\Big(x-z+\sum_{i=1}^d\sum_{j=1}^{l_i} s_{i,j} e_i\Big) \prod_{i=1}^d \prod_{j=1}^{l_i} {d}s_{i,j} \\
& =  \idotsint_{[0,1]^{|k|}} \frac{(-i)^{|k|}}{(2\pi)^d} \int_{[-\pi, \pi]^d} \!\!\!\!\!\!\!\!\!  e^{-i\langle\theta, x-z+\sum s_{i,j}e_i\rangle} \prod_{i=1}^d \theta_i^{l_i} \Phi_t(\theta) {   d}\theta  \prod_{i,j}{   d}s_{i,j}.
\end{aligned}
$$
An analogous identity holds for $\prod_{i=1}^d \nabla_{i, 1}^{l_i}p_t(x-z)$. Therefore we have
\begin{align}
& \Big|\prod_{i=1}^d \nabla_{i, 1}^{l_i} P_t(x-z) - \prod_{i=1}^d \nabla_{i, 1}^{l_i} p_t(x-z) \Big| \nonumber\\
\leq\ & \Big| \idotsint\limits_{[0,1]^{|k|}} \frac{1}{(2\pi)^d} \int\limits_{|\theta|_\infty\leq t^{-\frac13}} \!\!\!\!\!\!  e^{-i\langle\theta, x-z+\sum s_{i,j}e_i\rangle)} \prod_{i=1}^d \theta_i^{l_i} \big\{\Phi_t(\theta)-\phi_t(\theta)\big\} {   d}\theta  \prod{   d}s_{i,j}\Big| \nonumber \\
&  \ \ + \frac{1}{(2\pi)^d} \!\!\!\!\!\! \int\limits_{t^{-\frac13}<|\theta|_\infty<\pi} \!\!\!\!\!\!\!\!\!  |\theta|_\infty^{|k|} \Phi_t(\theta) {   d}\theta + \frac{1}{(2\pi)^d} \!\!\!\!\!\! \int\limits_{t^{-\frac13}<|\theta|_\infty} \!\!\!\!\!\! |\theta|_\infty^{|k|} \phi_t(\theta) {   d}\theta  \nonumber \\ \label{eq:xlt}
\leq\ & \frac{1}{(2\pi)^d} \!\!\!\!\!\!\! \int\limits_{|\theta|_\infty\leq t^{-\frac13}} \!\!\!\!\!\!\!  |\theta|_\infty^{|k|}  |\Phi_t(\theta)-\phi_t(\theta)| {   d}\theta + \frac{1}{(2\pi)^d} \!\!\!\!\!\!\!\!\! \int\limits_{t^{-\frac13}<|\theta|_\infty<\pi} \!\!\!\!\!\!\!\!\!\!\!  |\theta|_\infty^{|k|} \Phi_t(\theta) {   d}\theta + \frac{1}{(2\pi)^d} \!\!\!\!\!\!\!\! \int\limits_{t^{-\frac13}<|\theta|_\infty} \!\!\!\!\!\!\!\! |\theta|_\infty^{|k|} \phi_t(\theta) {   d}\theta.
\end{align}
Recall \eqref{pPhi}. We note that as $t\to\infty$, the second and third terms can be bounded by $Ce^{-c t^{\frac13}}$ for some $c, C$ depending only on $|k|$. For $\|x\|\leq t^\alpha$, $\sqrt{t}+\|x\|\to\infty$ implies $t\to\infty$, and it is easily seen that
$$
Ce^{-c t^{\frac13}} \leq C e^{-\frac{c}{2} (\sqrt{t}+\|x\|)^{1/3\alpha} },
$$
which is bounded by the right hand side of \eqref{eq:gradbd}.

For the first term in \eqref{eq:xlt}, we can apply Taylor expansion and bound it by
\[
\begin{aligned}
\frac{1}{(2\pi)^d} \!\!\!\!\!   \int\limits_{|\theta|_\infty\le t^{-\frac13}} \!\!\! \!\!  |\theta|_\infty^{|k|} e^{-\frac{t\|\theta\|^2}{2d}} \Big| e^{-\frac{t}{d}\sum_{i=1}^d\big(1-\cos\theta_i -\frac{\theta_i^2}{2}\big)} -1\Big| {   d}\theta \leq C \!\!\!\!\!   \int\limits_{|\theta|_\infty\le t^{-\frac13}} \!\!\! \!\!  |\theta|_\infty^{|k|} e^{-\frac{t\|\theta\|^2}{2d}} t |\theta|_\infty^4 {   d}\theta \leq  Ct^{-\frac{|k|+d+2}{2}},
\end{aligned}
\]
which is also bounded by the right hand side of \eqref{eq:gradbd} when $\|x\|\le t^\alpha=t^{\frac{|k|+d+1}{2(|k|+d)}}$.

To prove \eqref{eq:gradbd2} for $\|x\|\leq t^\alpha$, it remains to bound $|\prod_{i=1}^d \nabla_{i, 1}^{l_i} p_t(x-z) - \prod_{i=1}^d\partial^{l_i}_{x_i} p_t(x)|$. By the mean value theorem, there exists $x'\in \R^d$ with $x_i'\in [x_i-z_i, x_i-z_i+l_i]$ such that
\begin{equation}\label{eq:gradpbd}
\begin{aligned}
\Big|\prod_{i=1}^d \nabla_{i, 1}^{l_i} p_t(x-z) - \prod_{i=1}^d\partial^{l_i}_{x_i} p_t(x)\Big|
& = \Big|\prod_{i=1}^d\partial^{l_i}_{x_i} p_t(x') -\prod_{i=1}^d\partial^{l_i}_{x_i} p_t(x)\Big|  \\
& \leq \|x-x'\| \sup_{y=x+t(x'-x) \atop t\in [0,1]}\sum_{i=1}^d \Big|\partial_{x_i}\prod_{i=1}^d\partial^{l_i}_{x_i} p_t(y)\Big|.
\end{aligned}
\end{equation}
We can now apply \eqref{eq:ptbd}, where $x$ is replaced by $y$ with $|y-x|_1 \leq 2|k|$, and $l_1, \ldots, l_d$ are replaced by $l'_1, \ldots, l'_d$ with $\sum_{i=1}^d l'_i=|k|+1$. Note that for some $1\leq m \leq d$, we will have $|y_m| \geq \|y\|/\sqrt{d}$. In the product in \eqref{eq:ptbd}, for each $1\leq i\leq d$, we can bound
\begin{equation}\label{hbd1}
\frac{e^{-\frac{d y_i^2}{2td^{-1}}} (|y_i|/{\sqrt{td^{-1}}})^{l'_i+1}}{\sqrt{2\pi} |y_i|^{l'_i+1}}  \Big|a_{l'_i}\Big(\frac{y_i}{\sqrt{td^{-1}}}\Big)\Big| \leq \frac{C}{(\sqrt t)^{l'_i+1}},
\end{equation}
while for $i=m$, we can also give an alternative bound
\begin{align}
\frac{e^{-\frac{d y_m^2}{2td^{-1}}} (|y_m|/{\sqrt{td^{-1}}})^{l'_m+1}}{\sqrt{2\pi} |y_m|^{l'_m+1}}  \Big|a_{l'_i}\Big(\frac{y_m}{\sqrt{td^{-1}}}\Big)\Big|
& = \frac{e^{-\frac{d y_m^2}{2td^{-1}}} (|y_m|/{\sqrt{td^{-1}}})^{|k|+d+1}\big|a_{l'_i}\big(\frac{y_m}{\sqrt{td^{-1}}}\big)\big|}{\sqrt{2\pi} |y_m|^{|k|+d+1}}  (\sqrt{td^{-1}})^{|k|+d-l'_m} \nonumber \\
& \leq C \frac{(\sqrt{t})^{|k|+d-l'_m}}{|y_m|^{|k|+d+1}} \leq C \frac{(\sqrt{t})^{\sum_{i\neq m}(l'_i+1)}}{\|y\|^{|k|+d+1}}. \label{hbd2}
\end{align}
If $\|x\|\leq \sqrt{t}$, then we just apply \eqref{hbd1} in \eqref{eq:ptbd} and substitute the resulting bound into \eqref{eq:gradpbd}. If $\|x\|> \sqrt{t}$, then in \eqref{eq:ptbd}, we apply both \eqref{hbd1} and \eqref{hbd2} and then substitute the resulting bound into \eqref{eq:gradpbd}. Either way, since the bounds are
uniform in $|y-x|_1 \leq 2|k|$, we find that \eqref{eq:gradpbd} can be bounded by $C(\|x\|+\sqrt{t})^{-|k|-d-1}$, which is bounded by the right hand side of \eqref{eq:gradbd2}. This concludes the proof of the lemma.
\end{proof}

\begin{corollary}\label{C:grad}
Let $P^\e_t(x) := \e^{-d} P_{\e^{-2}t}(\e^{-1}x)$. For any $k=(k_0, k_1, \ldots, k_d)$, we have
\begin{equation}
\big|D_\e^k P^\e_t(x) - D^k p_t(x)\big|  \leq  \frac{o(1)}{(\sqrt{t}+\|x\|)^{k+d}},
\end{equation}
where $o(1)\to 0$ as $\e^{-1} (\sqrt{t}+\|x\|) \to \infty$.
\end{corollary}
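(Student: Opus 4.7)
The plan is to deduce Corollary~\ref{C:grad} from Lemma~\ref{L:grad} by a straightforward rescaling argument, since $P^\e$ is a diffusively rescaled version of $P$ and the continuum heat kernel $p$ enjoys the matching exact scaling identity.

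First I would record two scaling identities. By definition one has $P^\e_t(x) = \e^{-d} P_{\e^{-2}t}(\e^{-1}x)$ for $(t,x)\in\R\times\e\Z^d$, and by direct computation from $p_t(x)=(2\pi t)^{-d/2} e^{-|x|^2/(2t)}$ one also has $p_t(x) = \e^{-d} p_{\e^{-2}t}(\e^{-1}x)$. Next, the discrete derivative $\nabla_{i,\e}$ applied to a function of the form $f(\e^{-1}x)$ equals $\e^{-1}(\nabla_{i,1} f)(\e^{-1}x)$, and $\partial_t$ applied to a function of $\e^{-2}t$ pulls out a factor $\e^{-2}$; iterating this for a multi-index $k=(k_0,k_1,\dots,k_d)$ of total weight $|k|=2k_0+\sum k_i$, one obtains
\begin{equation*}
D^k_\e P^\e_t(x) \;=\; \e^{-d-|k|}\, (D^k_1 P)_{\e^{-2}t}(\e^{-1}x), \qquad
D^k p_t(x) \;=\; \e^{-d-|k|}\, (D^k p)_{\e^{-2}t}(\e^{-1}x).
\end{equation*}

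Subtracting and applying Lemma~\ref{L:grad} at the rescaled point $(\e^{-2}t,\e^{-1}x)$ would then give
\begin{equation*}
\bigl|D^k_\e P^\e_t(x) - D^k p_t(x)\bigr|
= \e^{-d-|k|}\, \bigl|D^k_1 P_{\e^{-2}t}(\e^{-1}x) - D^k p_{\e^{-2}t}(\e^{-1}x)\bigr|
\leq \frac{\e^{-d-|k|}\cdot o(1)}{(\sqrt{\e^{-2}t}+\|\e^{-1}x\|)^{|k|+d}}
= \frac{o(1)}{(\sqrt{t}+\|x\|)^{|k|+d}},
\end{equation*}
where the factors of $\e$ cancel since $\sqrt{\e^{-2}t}+\|\e^{-1}x\| = \e^{-1}(\sqrt{t}+\|x\|)$. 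The remainder $o(1)$ coming from Lemma~\ref{L:grad} tends to zero as $\sqrt{\e^{-2}t}+\|\e^{-1}x\|\to\infty$, which is exactly the condition $\e^{-1}(\sqrt{t}+\|x\|)\to\infty$ required in the statement.

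There is essentially no obstacle here; the only point to be careful about is tracking the exponents in the scaling of the mixed space-time derivatives (the factor $\e^{-2}$ per time derivative versus $\e^{-1}$ per spatial finite difference, which together contribute exactly $\e^{-|k|}$), and confirming that the residual $o(1)$ from Lemma~\ref{L:grad} translates correctly under the change of variables. Once these bookkeeping checks are done, the corollary follows immediately from Lemma~\ref{L:grad}.
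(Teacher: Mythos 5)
Your proposal is correct and follows exactly the same route as the paper's proof: the two scaling identities $D^k_\e P^\e_t(x) = \e^{-d-|k|} (D^k_1 P)_{\e^{-2}t}(\e^{-1}x)$ and $D^k p_t(x) = \e^{-d-|k|}(D^k p)_{\e^{-2}t}(\e^{-1}x)$, followed by applying Lemma~\ref{L:grad} at the rescaled argument and observing the cancellation of $\e^{-d-|k|}$ against $(\sqrt{\e^{-2}t}+\|\e^{-1}x\|)^{-|k|-d}$. Your write-up is just a more detailed version of the paper's two-line proof, and correctly matches the $o(1)$ condition $\e^{-1}(\sqrt t+\|x\|)\to\infty$ to the hypothesis of the lemma.
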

\begin{proof}
We note that
$$
D^k p_t(x) = \e^{-|k|-d}D^kp_s(y)\big|_{s=\e^{-2}t, y=\e^{-1}x} \quad \mbox{and} \quad
D_\e^k P^\e_t(x) = \e^{-|k|-d} D_1^kP_{s}(y)\big|_{s=\e^{-2}t, y=\e^{-1}x}.
$$
The result then follows from Lemma \ref{L:grad}.
\end{proof}

%

{\bf Acknowledgement.}  We thank the referees for very helpful comments. H.~Shen is supported by NSF grant DMS-1712684 / DMS-1909525 and NSF grant DMS-1954091,
and would like to acknowledge the support by
the Trimester Program ``Randomness, PDEs and Nonlinear Fluctuations'' at
Hausdorff Research Institute for Mathematics where part of this work is done. J.~Song is supported by Shandong University grant 11140089963041.
 R.~Sun is supported by NUS grant R-146-000-253-114. L.~Xu is supported by Macao S.A.R grant FDCT  0090/2019/A2 and University of Macau grant  MYRG2018-00133-FST.

\bibliographystyle{alphaabbr}
\bibliography{Reference}

%
%
%
%
\end{document}